\def\m{{\mathfrak m}} %%% m
\def\SZ{\mathbb Z} %%% Z
\def\ff{\mathfrak}
\def\spec{\mbox{\rm spec}}
\def\Spec{\mbox{\rm Spec}}
\def\xpd{\mbox{\rm xpd}}
\def\Ass{\mbox{\rm Ass}}
\def\cal{\mathcal}
\newtheorem{thm}{Theorem}[section]
\newtheorem{cor}[thm]{Corollary}
\newtheorem{prop}[thm]{Proposition}
\newtheorem{lem}[thm]{Lemma}
\newtheorem{rem}[thm]{Remark}
\newtheorem{definition}[thm]{Definition}
\begin{document}

\author{Bruce Olberding}

%%%%%%%%%%%%%%%%%%%%%%%%%%%%%%%%% \ead{olberdin@nmsu.edu}

\address{Department of Mathematical Sciences, New Mexico State University,
Las Cruces, NM 88003-8001 \textup{: \texttt{olberdin@nmsu.edu}}}
\thanks{The first author thanks Universit\'a degli studi ``Roma Tre'' for its hospitality and support for  a visit in which this work was begun.}

\author{Francesca Tartarone}

\address{Dipartimento di Matematica,  Universit\'a degli Studi ``Roma Tre','   Largo San Leonardo Murialdo 1, Roma 00146 - Italy, \textup{: \texttt{tfrance@mat.uniroma3.it}}}

\title[Integrally closed rings in birational extensions]
  {Integrally closed rings in birational extensions of two-dimensional regular local rings}

 %\email{tfrance@@mat.uniroma3.it}}

%\thanks{2010 {\it Mathematics Subject Classification.}  Primary 13H05, 13B22, 13A18}

%\volume{121}
%\pubyear{1997}
%\setcounter{page}{147}
%\receivedline{Received \textup{2} June \textup{1994;}
%              revised \textup{3} April \textup{1995}}
\maketitle

\begin{abstract}
Let $D$ be an integrally closed local Noetherian domain of Krull dimension $2$, and let $f$ be a nonzero element of $D$ such that $fD$ has prime radical.  We consider when an integrally closed ring $H$ between $D$ and $D_f$ is determined locally by finitely many valuation overrings of $D$.  We show such a local determination  is equivalent to a  statement about the exceptional prime divisors of normalized blow-ups of $D$, and, when $D$ is analytically normal,  this property holds for $D$ if and only if it holds for the completion of $D$.
This latter fact, along with MacLane's notion of key polynomials, allows us to prove
that in some central cases where $D$ is a regular local ring and $f$ is a regular parameter of $D$, then $H$ is determined locally by a single valuation.
 As a consequence, we show that if $H$ is also the integral closure of a finitely generated $D$-algebra, then the exceptional prime ideals of the extension $H/D$ are comaximal. Geometrically, this translates into a statement about intersections of irreducible components in the closed fiber of the normalization of a proper birational morphism.
\end{abstract}

\section{Introduction}

In this article we consider integrally closed birational extensions
of two-dimen\-si\-on\-al local Noetherian domains from two perspectives.
In  the first perspective, which is a ``top-down'' approach, we use
the fact that every integrally closed domain is the intersection of
its valuation overrings to examine how such rings can be represented
as intersections of valuation rings.  In the second perspective, we
employ a ``bottom-up'' approach and view an integrally closed birational extension as a
direct limit of normalized blow-up algebras.  The specific
phenomenon we are interested in is when finitely many valuations,
along with  a flat overring, serve to determine the
birational extension locally, and we show that this is equivalent, in the approach to the
ring from underneath via blow-up algebras, to the existence of a bound on how many irreducible components of an affine piece of the closed fiber of a normalized blow-up meet in an arbitrary point.

One of our main motivations is the open problem of classifying the not-necessarily-Noetherian integrally closed rings    between a two-dimensional Noetherian domain $D$ and its quotient field; see \cite{OlbS} for a survey of this topic and \cite{lt} for a classification of the integrally closed rings between  ${\mathbb{Z}}[X]$ and ${\mathbb{Q}}[X]$.    A more modest version of this problem is to describe the integrally closed rings between $D$ and a given finitely generated $D$-subalgebra of the quotient field of $D$; in other words, we wish to describe the integrally closed rings between $D$ and $D[\frac{g_1}{f},\ldots,\frac{g_n}{f}]$, when $f,g_1,\ldots,g_n \in D$. This problem is encompassed by the more general one of describing the integrally closed rings $H$ between $D$ and $D_f:=D[1/f]$, where $0\ne f \in {\ff m}$.
  A first step in this direction is to determine the local structure of such rings $H$.
Specifically, in the present article we are interested in when for each maximal ideal $M$ of $H$, there exist {\it finitely many} valuation overrings $V_1,\ldots,V_n$ of $H$ such that $H_M = V_1 \cap \cdots \cap V_n \cap (D_f)_M$ (here, as throughout the article,  the localization of $D_f$ is with respect to the set $H\setminus M$).
%A sequel to this article will explore the consequences of such a representation for the local classification of the rings between $D$ and $D_f$.  In particular, the second article will address the local ideal theory of the rings between $D$ and $D_f$.

Although our methods are purely algebraic, we also pose a geometric version of this phenomenon  to give further context. 
To frame the geometrical version, 
recall that a morphism $\pi:X \rightarrow S$ of schemes is a {\it modification} if $\pi$ is proper and birational.
 For example, when  $S = \Spec(D)$, with $D$ a domain, $u$ an indeterminate for $D$ and $I$  an ideal of $D$, then the blow-up $\pi: $ Proj$(D[Iu]) \rightarrow \Spec(D)$, as a projective birational morphism, is a modification.  When $D$ is a quasilocal domain with maximal ideal ${\ff m}$, and $\pi:X \rightarrow \Spec(D)$ is a morphism of schemes, then $\pi^{-1}({\ff m})$ is the {\it closed  fiber} of $\pi$. (More precisely, the closed fiber is $X \times_{{\rm{Spec}}(D)} \Spec({\Bbbk})$, where ${\Bbbk}$ is the residue field of $D$,  but $\pi^{-1}({\ff m})$ can be viewed as the 
  underlying topological space of this scheme.) For $0 \ne f \in D$, we denote by $X_f$ the open subscheme of $X$ consisting of all the points $x \in X$ such that $f$ is not in the maximal ideal of the local ring ${\cal O}_{X,x}$ of the point $x$. 
 For a morphism $\pi:X \rightarrow S$, we denote by  $\overline{\pi}:\overline{X} \rightarrow S$ the normalization of $\pi$.  
In our setting, we are interested in the intersection of irreducible components of the closed fiber of the normalization of a modification of $\Spec(D)$, where $D$ is a two-dimensional integrally closed local Noetherian domain. 
 The following theorem, which will be proved  at the end of  Section 4, summarizes the connections between the above ideas.

\begin{thm}  \label{first theorem}
Let $(D,{\ff m})$ be a two-dimensional integrally closed local Noetherian domain, let $0 \ne f \in {\ff m}$ and let $n$ be a positive integer.  Then the following are equivalent. 
\begin{itemize}

\item[(1)] For each 
modification $\pi:X \rightarrow \Spec(D)$,  at most $n$
irreducible components  of the closed  fiber of the normalization of $\pi$  meet  in any  affine open subscheme of $\overline{X}$ containing $\overline{X}_f$. 

\item[(2)] For every finitely generated $D$-subalgebra $H$ of $D_f$, at most $n$  of the  height $1$ prime ideals of the integral closure $\overline{H}$ of $D$ lying over ${\ff m}$  are contained in any single maximal ideal of $H$.

\end{itemize}
If also $\sqrt{fD}$ is a prime ideal of $D$, then (1) and (2) are equivalent to  
\begin{itemize}
\item[(3)]  For every integrally closed ring $H$ between $D$ and $D_f$ and maximal ideal $M$ of $H$, there is a representation $H_M = V_1 \cap \cdots \cap V_n \cap (D_f)_M,$ for some not necessarily distinct valuation overrings $V_1,\ldots,V_n$ of $H$.

\end{itemize}
\end{thm}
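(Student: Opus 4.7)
My plan is to establish (1)$\Leftrightarrow$(2) by a direct geometric-algebraic dictionary, and then (2)$\Leftrightarrow$(3) by invoking the paper's earlier structural results on locally finite valuation representations.

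For (1)$\Leftrightarrow$(2), every modification $\pi:X\to\Spec(D)$ is covered by affine opens of the form $\Spec(H)$ with $H$ a finitely generated $D$-subalgebra of the quotient field of $D$, and $\overline{X}$ is correspondingly covered by the $\Spec(\overline{H})$. An affine open $\Spec(\overline{H})$ of $\overline{X}$ contains $\overline{X}_f$ exactly when $\overline{H}_f=D_f$, which (together with $D\subseteq H$) identifies such affine opens with integral closures of finitely generated $D$-subalgebras of $D_f$; conversely, any such $H$ arises as an affine chart of the blow-up of an ideal of the form $(f,g_1,\ldots,g_n)$. Since $\overline{H}$ is a two-dimensional normal Noetherian domain, the minimal primes of the closed fiber $V({\mathfrak m}\overline{H})$ are precisely the height-$1$ primes of $\overline{H}$ lying over ${\mathfrak m}$, and two irreducible components meet at a point exactly when the two primes share a common maximal ideal. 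This yields (1)$\Leftrightarrow$(2).

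For (2)$\Leftrightarrow$(3), I would apply the paper's earlier theorems relating local valuation representations to exceptional prime divisors of normalized blow-ups. For (3)$\Rightarrow$(2), I specialize the $H$ of (3) to $\overline{H'}$ for a finitely generated $D$-subalgebra $H'$ of $D_f$, and compare the representation $\overline{H'}_M=V_1\cap\cdots\cap V_n\cap(D_f)_M$ with the Krull intersection $\overline{H'}_M=\bigcap_\mathfrak{q}\overline{H'}_\mathfrak{q}$ over the height-$1$ primes. Each height-$1$ prime $\mathfrak{p}\subseteq M$ of $\overline{H'}$ lying over ${\mathfrak m}$ contains $f$, so the DVR $\overline{H'}_\mathfrak{p}$ does not contain $(D_f)_M$ and must therefore be dominated by some $V_i$; this produces the bound $n$. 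Conversely, for (2)$\Rightarrow$(3), I would write an arbitrary integrally closed $H$ between $D$ and $D_f$ as a directed union of integral closures of finitely generated $D$-subalgebras $H_i\subseteq H$, so that the uniform bound $n$ from (2), combined with the paper's earlier local-representation theorem and the hypothesis that $\sqrt{fD}$ is prime, yields the representation $H_M=V_1\cap\cdots\cap V_n\cap(D_f)_M$.

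The main obstacle is the direction (2)$\Rightarrow$(3). A naive direct-limit argument does not suffice: a bound of $n$ on finite approximations need not descend to a bound of $n$ on the limit, since valuations appearing in intermediate representations can collide, dominate, or fail to propagate. The prime-radical hypothesis on $fD$ is essential here, and the bridge is provided by the paper's earlier structural results, which translate the combinatorial content of (2) about exceptional divisors of normalized blow-ups directly into a local valuation representation valid for arbitrary integrally closed rings, bypassing a naive passage to the limit.
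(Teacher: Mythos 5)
Your proposal is correct and follows essentially the same route as the paper: the (1)$\Leftrightarrow$(2) dictionary between affine charts of normalized blow-ups containing $\overline{X}_f$ and integral closures of finitely generated $D$-subalgebras of $D_f$ matches the paper's argument (which makes the identification precise via the isomorphism $\overline{X}_f \cong \Spec(D_f)$ obtained from the valuative criterion of properness), and your sketch of (3)$\Rightarrow$(2) is exactly the content of the paper's Proposition~\ref{representation2} combined with Lemma~\ref{representation}. For (2)$\Rightarrow$(3) you correctly identify that a naive direct limit fails and defer to the paper's ``earlier structural results''; the paper's actual bridge is Theorem~\ref{characterization} together with Proposition~\ref{n case}, whose ultraproduct construction is precisely the compactness device that makes the bound $n$ survive passage from finitely generated subalgebras to an arbitrary integrally closed $H$.
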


When also $D$ is analytically normal and $f$ has prime radical in the completion of $D$, Theorem~\ref{pass to completion} shows that whether $D$ satisfies (1)--(3) is equivalent to whether the ${\ff m}$-adic completion of $D$  does.  This observation is one of our key  tools in exhibiting a choice of $D$, $n$ and $f$ that satisfies (1)--(3).  Under the assumption that $D$ is a regular local ring and $f$ is a regular parameter, we use this reduction  in a strong way in Section 6
to pass to the case where $D = V[X]$, with $V$ a DVR.  This case then hinges on technical arguments in the longest section of the paper, Section 7, involving MacLane's extension of valuations via key polynomials.  With this case settled,    we prove in
Section 6 our main result:

\begin{thm} \label{second theorem}  {Let $D$ be a regular local ring of Krull dimension $2$,
and let $f$ be a regular parameter of $D$.
Suppose that either (a) $D$ is equicharacteristic, or (b) $D$ has mixed characteristic and $f$ is a prime integer in $D$.
 Then $D$, $f$ and $n=1$ satisfy the equivalent conditions (1)--(3) of Theorem~\ref{first theorem}.
}
\end{thm}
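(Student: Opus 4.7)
The plan is to establish condition (3) of Theorem~\ref{first theorem} with $n = 1$: namely, that every integrally closed ring $H$ between $D$ and $D_f$, and every maximal ideal $M$ of $H$, satisfies $H_M = V \cap (D_f)_M$ for a single valuation overring $V$ of $H$. As a regular local ring, $D$ is analytically normal; moreover, because $f$ is a regular parameter, $D/fD$ is a DVR, so $fD$ is prime in $D$, and $f\hat{D}$ is prime in the $\ff m$-adic completion $\hat{D}$ since $\hat{D}/f\hat{D}$ is the completion of the DVR $D/fD$. Hence Theorem~\ref{pass to completion} applies and it suffices to verify (3) for $\hat{D}$.

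Next I would invoke Cohen's structure theorem to put $\hat{D}$ in a uniform shape: in case (a), $\hat{D} \cong k[[X,Y]]$ for the residue field $k$ of $D$, and after a change of variables we may assume $f = X$; in case (b), with $f = p$, $\hat{D} \cong V_0[[Y]]$, where $V_0$ is a complete DVR with uniformizer $p$ and the same residue field as $D$. The next step, as indicated in Section~6, is to reduce the analysis of integrally closed overrings of $\hat{D}$ in $\hat{D}_f$ to that of integrally closed $V_0[X]$-subalgebras of $V_0[X][1/f]$, where $V_0$ is a DVR and $f$ either generates the maximal ideal of $V_0$ (case (b)) or can be so arranged by choosing a suitable DVR sitting inside $\hat{D}$ and renaming the variable (case (a)). The comparison between the two settings works because valuation overrings centered on the locus $f = 0$ can be controlled by the way they restrict to polynomial subrings.

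With this reduction in place, the goal becomes to show that if $V_0[X] \subseteq H \subseteq V_0[X][1/f]$ is integrally closed and $M$ is a maximal ideal of $H$, then $H_M = V \cap (V_0[X][1/f])_M$ for a single valuation overring $V$ of $H$. I would analyze the valuations of the quotient field of $V_0[X]$ extending the $f$-adic valuation on $V_0$ through MacLane's theory: each such valuation is encoded by a (possibly infinite) sequence of key polynomials $\phi_1, \phi_2, \ldots \in V_0[X]$ with strictly increasing assigned values $\mu_1 < \mu_2 < \cdots$. The crux of the argument is combinatorial: if two distinct valuation overrings were both needed to cut out $H_M$, their MacLane sequences would have to share a common initial segment and then diverge, which one shows is incompatible with their having a common center at the single maximal ideal $M$ of $H$.

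The principal obstacle is exactly this MacLane-theoretic analysis, which is the content of the long Section~7. The technical heart lies in tracking how key polynomials are augmented, how their assigned values control membership in $H_M$, and ultimately in ruling out ``branching'' of valuations that would force $n > 1$. The hypothesis that $f$ is a regular parameter (case (a)) or a prime integer (case (b)) is essential here, for it ensures that the $f$-adic valuation on $V_0$ extends uniquely to an initial Gauss-type valuation on $V_0[X]$, which seeds the key polynomial construction in a controlled way. Case (b) adds a further layer of subtlety, since one must simultaneously handle the $p$-adic arithmetic on $V_0$ and polynomial computations in $X$. Once Section~7 delivers the single-valuation representation for the $V_0[X]$-setting, the reductions above transport the conclusion back through $\hat{D}$ to $D$, yielding (3) with $n = 1$.
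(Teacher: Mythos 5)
Your outline matches the paper's strategy (pass to completion, Cohen structure theorem, reduce to a DVR-polynomial setting, MacLane key polynomials), but the crucial transfer step is glossed over in a way that hides the actual mechanism. You write that one should ``reduce the analysis of integrally closed overrings of $\hat D$ in $\hat D_f$ to that of integrally closed $V_0[X]$-subalgebras of $V_0[X][1/f]$'' and justify this by saying that valuation overrings ``can be controlled by the way they restrict to polynomial subrings.'' That heuristic is not a proof, and no direct comparison of the lattice of integrally closed overrings of $\hat D$ with that of a polynomial subring is attempted in the paper; such a comparison would require real work, because $\hat D$ is much larger than $V_0[g]_{(f,g)}$.

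What the paper actually does is slicker and leans on Theorem~\ref{pass to completion} \emph{a second time}. From Cohen's theorem one produces a Noetherian local subring $A := V[g]_{(f,g)} \subset \hat D$ (with $V$ a DVR with uniformizer $f$, and $g$ a variable) such that $\widehat{A} = \hat D$. Since $A$ is a localization of a polynomial ring over a DVR, Lemma~\ref{MacLane} (via Theorem~\ref{localization}) shows every ring between $A$ and $A_f$ is essentially one-valuated. Now Theorem~\ref{pass to completion}, applied with $A$ in the role of ``$D$,'' transports this to the statement that every $\widehat{A}$-subalgebra of $\widehat{A}_f = \hat D_f$ is essentially one-valuated. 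A third and final application of Theorem~\ref{pass to completion}, this time with the original $D$, closes the loop. So the reduction is not ``from $\hat D$ down to $V_0[X]$''; it is ``from the auxiliary $A$ up to $\hat D$ via completion, then from $\hat D$ back down to $D$ via completion.'' You should also note that applying Theorem~\ref{pass to completion} to $A$ requires checking $A$ is analytically normal and $f\widehat{A}$ has prime radical; both are easy (regular local, and $\widehat{A}/f\widehat{A}$ is a power series ring over a field or a complete DVR, hence a domain), but they need to be said.

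Everything else in your outline is consistent with the paper, including the observation that $D/fD$ being a DVR gives $f\hat D$ prime, and the reliance on Section~7's MacLane analysis as the technical core. The gap is precisely in the transfer step above.
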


In particular, as an application to non-Noetherian commutative ring theory,  this gives a local classification of   integrally closed rings between $D$ and $D_f$:   

\begin{cor}
 With the same assumptions as Theorem~\ref{second theorem},
for any integrally closed ring $H$ between $D$ and $D_f$ and maximal ideal $M$ of $H$, there exists a valuation overring $V$ of $H$ such that $H_M = V \cap (D_f)_M$.
\end{cor}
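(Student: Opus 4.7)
The corollary is an immediate consequence of Theorem~\ref{second theorem}, and the plan is essentially to verify that condition~(3) of Theorem~\ref{first theorem} applies in the present setting and then read off the conclusion with $n=1$.

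First I would check that the hypothesis ``$\sqrt{fD}$ is a prime ideal of $D$'' needed to invoke (3) is automatic here. Since $D$ is a two-dimensional regular local ring and $f$ is a regular parameter, $f$ is a prime element of $D$: the quotient $D/fD$ is a one-dimensional regular local ring, hence a domain. Consequently $fD$ itself is prime, so $\sqrt{fD}=fD$ is prime, and condition~(3) of Theorem~\ref{first theorem} is available for the triple $(D,f,n)$.

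Next, Theorem~\ref{second theorem} asserts precisely that $D$, $f$ and $n=1$ satisfy the equivalent conditions (1)--(3) of Theorem~\ref{first theorem}. In particular, condition~(3) holds with $n=1$: for every integrally closed ring $H$ between $D$ and $D_f$ and every maximal ideal $M$ of $H$, there exists a valuation overring $V$ of $H$ (a single one, since $n=1$) with
\[
H_M \;=\; V \cap (D_f)_M.
\]
This is exactly the statement of the corollary, so no further argument is needed. The only substantive content is the verification that $f$ is prime, which secures the applicability of clause~(3); all the real work has already been absorbed into the proof of Theorem~\ref{second theorem}.
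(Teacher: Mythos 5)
Your proof is correct and is exactly the argument the paper intends: the corollary is read off from condition~(3) of Theorem~\ref{first theorem} with $n=1$, which is available by Theorem~\ref{second theorem}. Your observation that a regular parameter $f$ in a two-dimensional regular local ring is a prime element (so $\sqrt{fD}$ is prime, unlocking the equivalence with clause~(3)) is a small but necessary detail the paper leaves implicit, and you have supplied it correctly.
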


  Thus $H$ is locally as simple as one could hope for: It is locally the intersection of a valuation overring and the flat PID overring $(D_f)_M$. In a future article, we work out the consequences for the structure of the ring $H$ based on this representation. 
    Such behavior cannot be expected without some strong restrictions on $f$: A two-dimensional integrally closed local Noetherian domain $D$  is expressible with such a (necessarily local, since $D$ is local) representation as in the theorem above if and only if $f$ is contained in a unique height $1$ prime ideal of $D$. Thus when the radical of $fD$ is not a prime ideal, 
 statement (3) of the theorem cannot be satisfied  with the choice of $n=1$.   
%However, we do not know if this condition is not only necessary but also sufficient to guarantee this local representation of $H$.

% one may always obtain locally a {\it finite} representation, $H_M = V_1 \cap \cdots \cap V_n \cap (D_f)_M$, where $H$ is a not-necessarily-Noetherian integrally closed ring between $D$ and $D_f$ (and where, in contrast to Theorem~\ref{first theorem}, $n$ depends only on $H$, as opposed to $D$ and $f$).
%  However, if $H$ is assumed also to be Noetherian, then it is easy to see such a finite representation can be obtained using the decomposition of a Krull domain as an intersection of localizations at height $1$ prime ideals.  But we are interested in all integrally closed rings between $D$ and $D_f$, and hence in the case where $H$ is not necessarily a Krull domain, and the equivalence of (2) and (3) in Theorem~\ref{first theorem} shows that 

In any case, in the special setting of Theorem~\ref{second theorem}, we have the immediate consequence for affine $D$-algebras:

\begin{cor}  With the same assumptions as Theorem~\ref{second theorem}, if  $H$ is the integral closure of a finitely generated $D$-subalgebra of $D_f$, then distinct height one prime ideals of $H$ lying over the maximal ideal of $D$ are comaximal.

\end{cor}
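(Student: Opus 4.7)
The plan is to deduce the corollary directly from Theorem~\ref{second theorem}. By hypothesis I may write $H = \overline{H_0}$, where $H_0$ is a finitely generated $D$-subalgebra of $D_f$. Theorem~\ref{second theorem} asserts that the triple $(D, f, n=1)$ satisfies the equivalent conditions (1)--(3) of Theorem~\ref{first theorem}, so in particular statement (2) applied to $H_0$ yields that at most one height $1$ prime ideal of $\overline{H_0} = H$ lying over $\ff m$ is contained in any single maximal ideal of $H_0$.

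I would then argue by contradiction. Suppose $P_1$ and $P_2$ are two distinct height $1$ primes of $H$ both lying over $\ff m$, but not comaximal. Then $P_1 + P_2 \subseteq M$ for some maximal ideal $M$ of $H$. Since $H$ is the integral closure of $H_0$ in their common fraction field, $H$ is integral over $H_0$, so $M_0 := M \cap H_0$ is a maximal ideal of $H_0$ and $P_i \cap H_0 \subseteq M_0$ for $i = 1,2$. Thus both $P_1$ and $P_2$ are among the height $1$ primes of $H$ over $\ff m$ whose contractions to $H_0$ sit inside $M_0$, and the bound from condition (2) with $n = 1$ forces $P_1 = P_2$, contradicting distinctness. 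Hence the two primes must be comaximal, as claimed.

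No serious obstacle arises in this argument: all of the substantive work is already packaged in Theorem~\ref{second theorem}, and the corollary is essentially a translation of statement (2) for $n=1$ into the language of comaximality of height one primes above $\ff m$. The only routine verification is the standard lying-over fact that a maximal ideal of $H$ contracts to a maximal ideal of $H_0$ under the integral extension $H_0 \subseteq H$, which allows us to pass from a putative maximal ideal of $H$ containing both $P_1$ and $P_2$ to a maximal ideal of $H_0$ witnessing a violation of condition (2).
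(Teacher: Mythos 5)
Your argument reaches the correct conclusion and uses the right key input---namely Theorem~\ref{second theorem} together with condition (2) of Theorem~\ref{first theorem} with $n=1$---but the detour through $H_0$ and the contracted ideal $M_0 = M \cap H_0$ is unnecessary, and it quietly relies on a notational quirk in the statement of condition~(2). If you trace the proof of Theorem~\ref{first theorem} (the ``(1) implies (2)'' direction), what is actually established is that at most $n$ of the height one primes of the integral closure $\overline{H}$ lying over $\ff m$ are contained in any single maximal ideal \emph{of $\overline{H}$}, not of the subalgebra $H$; the final ``$H$'' in the printed statement of (2) should be read as ``$\overline{H}$'', since the proof works entirely inside $\Spec(\overline{H})$. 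Under that reading, the corollary is immediate: put $H = \overline{H_0}$; if $P_1 \ne P_2$ are height one primes of $H$ over $\ff m$ with $P_1 + P_2 \subseteq M$ for some maximal ideal $M$ of $H$, then two distinct such primes lie in $M$, violating (2) with $n=1$, so $P_1 + P_2 = H$.

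The version you invoke---that at most one height one prime of $\overline{H_0}$ over $\ff m$ has contraction lying inside any given maximal ideal of $H_0$---is formally a stronger statement than the one the paper proves (it implies the $\overline{H}$-version by contracting, but the converse is not obvious, since two primes in distinct maximal ideals of $\overline{H_0}$ could contract into the same maximal ideal of $H_0$). Fortunately your overall argument is still sound, because the corollary only needs the weaker $\overline{H}$-version, and in fact that version already finishes the job the moment you write $P_1, P_2 \subseteq M$, before you ever pass to $M_0$. So: same approach and correct result, but you can and should drop the step involving $M_0$ and apply (2) directly to the maximal ideal $M$ of $H = \overline{H_0}$.
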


The prime ideals in the corollary correspond to Rees valuation rings for an ideal of $D$, as we recall in Section 2, and we mention an application of this for one-fibered ideals in Corollary~\ref{one-fibered corollary}.

 Finally, returning to the geometric interpretation, 
we conclude:

\begin{cor} \label{connectedness}  With the same assumptions as Theorem~\ref{second theorem}, let 
 $\pi:X \rightarrow \Spec(D)$ be a modification, 
with $X$ normal.  
% and let $\overline{\pi}:\overline{X} \rightarrow \Spec(D)$ be its normalization. 
If the closed fiber of ${\pi}$ has more than one irreducible component, then each irreducible component must meet another irreducible component of the closed fiber, but not in any affine open subset of ${X}$ containing ${X}_f$.  
 \end{cor}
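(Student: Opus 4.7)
The plan is to combine Theorem~\ref{second theorem} with Zariski's main/connectedness theorem, splitting the argument into two essentially independent parts.

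For the assertion that each irreducible component must meet another, I would first observe that $D$, being a two-dimensional regular local ring, is a normal Noetherian domain, and that $\pi:X\to\Spec(D)$ is a proper birational morphism with $X$ normal. Zariski's main theorem then gives $\pi_{\ast}\mathcal{O}_X = \mathcal{O}_{\Spec(D)}$, so by Zariski's connectedness theorem every fiber of $\pi$ is connected; in particular, the closed fiber is connected. A connected topological space that decomposes into finitely many closed irreducible components, with more than one such component, must have every component meet at least one other: otherwise the isolated component together with the union of the remaining components would realize a disconnection. This establishes the first part of the claim.

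For the assertion that such a meeting cannot occur in any affine open of $X$ containing $X_f$, I would apply Theorem~\ref{second theorem}, which says that the triple $(D,f,1)$ satisfies condition (1) of Theorem~\ref{first theorem}. Since $X$ is already normal, $\overline{X}=X$ and $\overline{X}_f=X_f$, so that condition reads: at most one irreducible component of the closed fiber of $\pi$ meets any affine open subscheme of $X$ containing $X_f$. In particular, any point at which two distinct components intersect must lie outside every affine open subscheme of $X$ containing $X_f$. Combining the two parts yields the corollary.

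I do not expect a serious obstacle in this argument; the substantive content is carried by Theorems~\ref{first theorem} and~\ref{second theorem}, and the only additional input is the classical fact that the closed fiber of a proper birational morphism from a normal scheme to the spectrum of a normal Noetherian local ring is connected. If anything, the only step requiring care is the elementary topological observation that in a connected finite union of closed irreducibles with more than one member, no component can be disjoint from all the others.
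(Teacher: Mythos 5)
Your proof is correct and follows essentially the same route as the paper: the authors likewise invoke Grothendieck's formulation of Zariski's Connectedness Theorem (that the fibers of a modification $X\to S$ of Noetherian integral schemes with $S$ normal are connected) for the first assertion, and Theorem~\ref{second theorem} via condition (1) of Theorem~\ref{first theorem} with $n=1$ for the second, using exactly the identification $\overline{X}=X$, $\overline{X}_f = X_f$ that you make. The only minor remark is that the connectedness step really hinges on normality of the base $\Spec(D)$ rather than of $X$, but as $D$ is regular this is automatic and does not affect the argument.
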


The corollary follows from Theorem~\ref{second theorem} and Grothendieck's version of Zariski's Connectedness Theorem, which implies that the fibers of a modification $X \rightarrow S$ of Noetherian integral schemes, with $S$ normal, are all connected \cite[(4.3.1), p.~130]{EGAIII}. 

If also $D$ is excellent, then the normalization of a scheme of finite type over $\Spec(D)$ is finite, so that the normalization of a modification is again a modification, since the composition of a finite morphism with a proper morphism is again proper \cite[Exercise 4.1, p.~105]{Hart}. Thus when $D$ is excellent the corollary may be restated for normalizations of modifications.

  We do not know  whether  for a two-dimensional integrally closed domain $D$ and nonzero element $f$ in $D$, there exists a universal bound $n>0$ depending only on $D$ and $f$ such that (1), (2) or (3) of Theorem~\ref{first theorem} hold. In fact, the only situation  of which we are aware in which  (1), (2) or (3) hold for a universal bound $n$ depending only on $D$ and $f$ is in the special  setting of Theorem~\ref{second theorem} with $n=1$. 

\section{Preliminaries}

In this section we discuss some  basic terminology  and
properties involving valuation domains  and extensions of
two-dimensional Noetherian domains.

%, as well as review some properties of these extensions.

An {\it overring} of a domain $D$ is a ring between $D$ and its quotient field.  A quasilocal domain $R$ {\it dominates} a quasilocal domain $D$ if the maximal ideal of $D$ is a subset of the maximal ideal of $R$.  Throughout the paper, when $D$ is a quasilocal domain, we reserve the notation ${\ff m}$ for its maximal ideal.

 Given a field $K$ and a valuation $v$  on $K$  (with values in a
totally ordered group  $\Gamma$), the set of all the elements of $K$
assuming positive value through $v$ is a domain $V$ with quotient
field $K$ called the \textit{valuation
 domain}  associated to $v$.
Valuation domains are quasi-local, integrally closed and any integrally closed domain is the intersection of
its valuation overrings.

The set of   ideals of a valuation domain is
totally ordered under inclusion, that is if $I$ and $J$ are two
ideals of $V$, either $I \subseteq J$ or $J \subseteq I$.

A \textit{rank-one  discrete valuation domain (DVR)} is a
one-dimensional valuation domain with principal maximal ideal. A
valuation domain is Noetherian if and only if it is a DVR.

Any overring of a valuation domain $V$ is a valuation domain and it
is also a localization of $V$ at some prime ideal. Conversely,
each localization of $V$ at a prime ideal is a valuation overring of
$V$. Thus there is a bijecton between the prime spectrum of
$V$ and its overrings. In particular, if $V$ is one-dimensional, it has just one overring, which is $K$ (in fact, $K = V_{(0)}$).

%Valuation domains are also characterized as the domains satisfying hte following property:
%let $x \in K$, then either $x \in V$ or $x^{-1} \in V$.

\textit{Pr\"ufer domains} are the most strict generalization of valuation domains. In fact, one of their numerous characterizatons  states that $D$ is a Pr\"ufer domain if and only if the localization $D_P$ is a valuation domain, for each prime ideal  $P$ of $D$. Thus, valuation domains are exactly the quasi-local Pr\"ufer domains.

Each overring $T$ of a Pr\"ufer domain $D$ is an intersection of its localizations at some prime ideals, that is $T = \bigcap_{P \in \mathcal{P}}D_P$, where $\mathcal{P}$ is a subset of $\Spec(D)$. Thus the overrings of a Pr\"ufer domain $D$ are in one-to-one correspondence with the families of prime ideals of $D$, so again generalizing what happens for valuation domains.

Dedekind domains are exactly the Pr\"ufer, Noetherian domains.

In commutative algebra the Pr\"ufer property and some of its generalizatons (Pr\"ufer-like properties) have an important role in multiplicative ideal theory (for instance, B\'ezout domains are exactly the Pr\"ufer domains with trivial ideal class group).

For a more detailed reference about valuation theory and Pr\"ufer
domains we suggest to read, among others, \cite{gilmer}.

\medskip

{\bf (2.1)} {\it Hidden prime divisors.}
A valuation overring $V$ of a local Noetherian domain $D$ of Krull dimension $d$ is a {\it hidden prime divisor} of $D$ if $V$ is a DVR that dominates $D$ and the residue field of $V$ has transcendence degree $d-1$ over the residue field of $D$.  Since a valuation overring of $D$ has transcendence degree at most $d-1$
\cite[Theorem 1]{Abh}, it follows that
 when $D$ has Krull dimension $2$, a DVR overring that dominates $D$ is a hidden prime divisor if and only if its residue field is not algebraic over the residue field of $D$.
\medskip

{\bf (2.2)}  {\it Exceptional prime divisors.}  Let $H$ be an integrally closed Noetherian overring of the local Noetherian domain  $D$.
Following \cite{HL}, we say
the {\it exceptional prime divisors} of the extension $H/D$  are the hidden prime divisors of $D$ of the form $H_P$, where $P$ is a height $1$ prime ideal of $H$.   We denote the set of all exceptional prime divisors of $H/D$ by $\xpd(H/D)$. The exceptional prime divisors are localizations of $H$ at the ``exceptional prime ideals'' of $H/D$, which we define in (2.3).
The terminology here is motivated by a geometric interpretation:
 When $Y$ is the normalized blow-up of $\Spec(D)$ along a zero-dimensional closed subscheme and $U=\Spec(R)$ is an affine open subset of $Y$, then the exceptional prime divisors of $R/D$ are the local rings of the points in $U$ that are  generic points of  irreducible components  of the exceptional fiber of the blow-up.

\medskip

{\bf (2.3)}  {\it Exceptional prime ideals.}
The prime ideals $P$ arising as in (2.2) are the {\it exceptional prime ideals} of the extension $H/D$.  When $D$ is a local
 Noetherian domain of Krull dimension $2$, and  $H$ is a proper  overring of $D$ that is the integral closure of a finitely generated $D$-subalgebra of $H$, then for each  exceptional prime ideal $P$ of the extension $H/D$, the ring $H/P$ has Krull dimension $1$.
For
since $H$ has Krull dimension at most $2$, this amounts to the claim
that $P$ is a nonmaximal prime ideal of $H$, which in turn is a
consequence of Evans' version of Zariski's Main Theorem. Indeed, since $B \ne D$ and
 $B$ is a finitely generated algebra over the integrally closed local domain $D$, no
 prime ideal of the integral extension $H$ of $B$ containing ${\ff m}$ is both maximal and minimal among those that
 contain ${\ff m}$ \cite[Theorem, p.~45]{Evans}.
 So the height $1$ prime
 ideal $P$ of $H$ must be contained in another prime ideal of $H$, which proves that
 $P$ is nonmaximal.

\medskip

{\bf (2.4)} {\it Rees Valuations.}  The exceptional prime divisors can be used to define the Rees valuation rings of an ideal of $D$.  Let $I = (a_1,\ldots,a_n)$ be an ideal of $D$.   Then the set of {\it Rees valuation rings} of $I$ is $${\xpd}\left(\overline{D[Ia_1^{-1}]}\right) \cup \cdots \cup {\xpd}\left(\overline{D[Ia_n^{-1}]}\right).$$  Here, as throughout the paper, $\overline{R}$ denotes the integral closure of the domain $R$.
Thus the Rees valuation rings are the local rings of the generic points of the  irreducible components in the closed fiber of the  normalized blow-up of $\Spec(D)$ along $\Spec(D/I)$.
  In the case where $D$ has Krull dimension $2$, $I = (f,g)$ and $I$ is ${\ff m}$-primary, then the set of Rees valuation rings of $I$ is $\xpd(\overline{D[f/g]})$; i.e., you only need one affine piece of the normalized blow-up along  $\Spec(D/I)$ to capture all the Rees valuation rings \cite[p.~285]{HL}.  This is true in general for parameter ideals in arbitrary dimension; see the discussion on p.~438 of \cite{Sally}.
  \medskip

{\bf (2.5)} {\it One-fibered ideals.}
  An ideal $I$ of the local ring $D$ that has only one Rees valuation ring is  {\it one-fibered}.
For more on such ideals, see \cite{Sally} and \cite{Swanson} and their references.  The property of being one-fibered can be expressed also in terms of exceptional prime divisors of an extension.  Let $I = (f_1,\ldots,f_n)$ be an ideal of $D$, and for each $i$, let $H_i$ be
 the integral closure of $D[If_i^{-1}]$.  Then $I$ is one-fibered if and only if for each $i$, the extension $H_i/D$ has a unique exceptional prime divisor.  Moreover, if the dimension of $D$ is $n$ (the number of generators of $I$), so that $I$  is   primary for the maximal ideal of $D$, then $I$ is one-fibered if and only if there exists some $i$ such that $H_i/D$ has a unique exceptional prime divisor.  This is because, as discussed in (2.4), in such a case every Rees valuation ring of $I$ occurs as an exceptional prime divisor of the extension $H_i/D$, for any choice of $i$.
 Geometrically,
%with $Y = $Proj$(\overline{D[It]})$, the canonical morphism $\pi:Y \rightarrow \Spec(D)$ is the
% normalized blow-up of $\Spec(D)$ along the zero-dimensional closed subscheme $\Spec(D/I)$.
the ideal $I$ is one-fibered if and only if the closed fiber of the normalized blow-up of $\Spec(D)$ along $\Spec(D/I)$
is  irreducible.

\medskip

To phrase our main results in later sections, we introduce  the terminology of essentially $n$-fibered extensions and
essentially $n$-valuated subrings.  These are extensions which become $n$-fibered or $n$-valuated after passage to a localization:

\setcounter{thm}{5}

\begin{definition} \label{ess n-fibered}
If $D$ is a local Noetherian domain and $B$ is a finitely generated $D$-subalgebra of the quotient field of $D$, then
  the extension $B/D$ is {\it essentially $n$-fibered}   if for each prime ideal $P$ of $\overline{B}$, the extension $\overline{B}_P/D$ has at most $n$ exceptional prime ideals.
  \end{definition}

Thus $B/D$ is essentially $n$-fibered if and only if
at most $n$ irreducible components of the closed fiber of  $\Spec(\overline{B}) \rightarrow \Spec(D)$ intersect in any point in  $\Spec(\overline{B})$.
Whether $B$ is essentially $n$-fibered is determined entirely by its normalization $\overline{B}$.

%{\bf (2.7)} {\it Essentially $n$-valuated subrings.}

\begin{definition}
     Let $D$ be a domain, and let $R$ be an  overring of $D$.   If $H$ is a ring with $D \subseteq H \subseteq R$, then we say that $H$ is an {\it essentially $n$-valuated subring of $R$}
     if for each prime ideal $P$ of $\overline{H}$, there exist (not necessarily distinct)  valuation overrings $V_1,V_2,\ldots,V_n$ of $H$ such that $\overline{H}_P = V_1 \cap V_2 \cap \cdots \cap V_n \cap R_P$.  (By $R_P$, we mean the localization of $R$ with respect to the multiplicatively closed subset $\overline{H}\setminus P$.)
     \end{definition}

      As in Definition~\ref{ess n-fibered} the property of being essentially $n$-valuated is determined by the normalization of $H$.
  Since the localization of a valuation ring is a valuation ring, it follows that whether $H$ is an essentially $n$-valuated subring of $R$ is determined by the maximal ideals of $\overline{H}$, in the sense that $H$ is an essentially $n$-valuated subring of $R$ if and only if for each maximal ideal $M$ of $\overline{H}$, there exist  valuation overrings $V_1,V_2,\ldots,V_n$ of $\overline{H}$ such that $\overline{H}_M = V_1 \cap V_2 \cap \cdots \cap V_n \cap R_M$.
  %In the case where $H$ is an essentially $1$-valuated subring of $R$, we omit the ``$1$'' and say simply that $H$ is an essentially one-valuated subring of $R$.

\medskip

We mention next  in (2.8) and (2.9) two strong facts regarding
 integrally closed overrings of two-dimensional Noetherian domains

\medskip

{\bf (2.8)} {\it Integrally closed Noetherian overrings.}  In \cite{H}, Heinzer proves that an overring of a two-dimensional  Noetherian domain $D$ is Noetherian and integrally closed if and only if it is a Krull domain.  Thus the integrally closed Noetherian overings of $D$ are precisely those overrings that can be represented by a finite character intersection of DVRs.

\medskip

{\bf (2.9)} {\it Integrally closed local Noetherian overrings.}     If $D$ is a two-dimensional local Noetherian domain  that is analytically normal (i.e., its completion is a normal domain), then
every integrally closed local Noetherian overring of $D$ is the localization of a finitely generated $D$-algebra.  This can be deduced from a criterion for spots given by Heinzer, Huneke and Sally in \cite{HHS}.  For
since $D$ has Krull dimension $2$ and is analytically normal,  every integrally closed domain in the quotient field of $D$ which dominates $D$ and is the localization of a finitely generated $D$-algebra is analytically irreducible
 \cite[Proposition, p.~160]{Lipman}.  Then by \cite[Theorem 1]{HHS}, every normal local Noetherian domain $H$ in the quotient field of $D$ that dominates $D$ is the localization of a finitely generated $D$-algebra.   On the other hand, if $H$ is a normal Noetherian overring of $D$ that does not dominate $D$, then the maximal ideal of $H$ contracts to a height $1$ prime ideal ${\ff p}$ of $D$, and it follows since $D_{\ff p}$ is a DVR that $H = D_{\ff p}$.

\section{Essentially $n$-valuated subrings}

In this section we prove some general facts about essentially  $n$-valuated subrings which do not require restriction to
two-dimensional Noetherian domains.  These are technical results which will be useful later for
passing from representations of Noetherian rings between $D$ and
$D_f$ to representations of arbitrary integrally closed rings
between $D$ and $D_f$. The first two results use the ultraproduct
construction.  Let $\{B_\alpha:\alpha \in {\cal A}\}$ be a collection of rings, and let ${\cal U}$ be an ultrafilter on ${\cal A}$ (i.e., ${\cal U}$ is a filter on the power set of $\cal A$ that is maximal with respect to not containing the empty set).  The {\it ultraproduct} of the rings $B_\alpha$ with respect to ${\cal U}$ is the ring $\prod_{\cal U}B_\alpha := (\prod_{\alpha \in {\cal A}}B_\alpha)/I$, where $I = \{(b_\alpha):\{\alpha:b_\alpha = 0\} \in {\cal U}\}$. If all of the $B_\alpha$ are subrings of a ring $S$, and $R$ is also a subring of $S$, then by identifying $S$ with its image in $\prod_{\cal U} S$ under the diagonal mapping and $\prod_{\cal U}B_\alpha$ with its image in $\prod_{\cal U}S$, we may consider the subring $(\prod_{\cal U} B_\alpha) \cap R$ of $S$. Applying the relevant definitions, this subring consists of all $r \in R$ such that $\{\alpha:r \in B_\alpha\} \in {\cal U}$.

   \begin{lem} \label{ultraproduct}  Let $S$ be a ring, and let $H \subseteq R \subseteq S$ be subrings of $S$
    such that $H$ is a directed union of subrings $D_\alpha=B_\alpha \cap R_\alpha$, $\alpha \in {\cal A}$, where $B_\alpha$ and $R_\alpha$ are subrings of $S$ with $R = \bigcup_{\alpha}R_\alpha$.  Suppose that for all $\alpha,\beta \in {\cal A}$,  whenever $D_\alpha \subseteq D_\beta$, then $R_\alpha \subseteq R_\beta$.
            Then there exists an ultrafilter ${\cal U}$ on ${\cal A}$ such that, identifying each subring of $S$ with its image in $\prod_{\cal U}S$ under the diagonal mapping, we have that $H = (\prod_{\cal U}B_\alpha) \cap R$.
   \end{lem}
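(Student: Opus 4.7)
The natural move is to read the directed union as a filter base. For each $\alpha \in {\cal A}$, set $F_\alpha := \{\beta \in {\cal A} : D_\alpha \subseteq D_\beta\}$. Since the family $\{D_\alpha\}$ is directed, given any $\alpha_1,\alpha_2$ there is a $\gamma$ with $D_{\alpha_1}, D_{\alpha_2} \subseteq D_\gamma$, whence $F_\gamma \subseteq F_{\alpha_1} \cap F_{\alpha_2}$. Each $F_\alpha$ is nonempty (it contains $\alpha$), so $\{F_\alpha : \alpha \in {\cal A}\}$ generates a proper filter on ${\cal A}$. Extend this filter to an ultrafilter ${\cal U}$. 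The plan is then to verify that ${\cal U}$ witnesses the desired equality. Throughout, identify each subring of $S$ with its diagonal image in $\prod_{\cal U} S$, so that, as noted in the excerpt, $(\prod_{\cal U} B_\alpha) \cap R$ is exactly the set of $r \in R$ with $\{\alpha : r \in B_\alpha\} \in {\cal U}$.

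For the inclusion $H \subseteq (\prod_{\cal U} B_\alpha) \cap R$, take $h \in H$. Since $H$ is the directed union of the $D_\alpha$, pick $\alpha_0$ with $h \in D_{\alpha_0}$. For every $\beta \in F_{\alpha_0}$ we have $D_{\alpha_0} \subseteq D_\beta \subseteq B_\beta$, so $h \in B_\beta$. Thus $F_{\alpha_0} \subseteq \{\alpha : h \in B_\alpha\}$, which places the latter set in ${\cal U}$. Because $h \in H \subseteq R$, this shows $h \in (\prod_{\cal U} B_\alpha) \cap R$.

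For the reverse inclusion, let $r \in R$ satisfy $T_1 := \{\alpha : r \in B_\alpha\} \in {\cal U}$. Since $R = \bigcup_\alpha R_\alpha$, pick $\alpha_1$ with $r \in R_{\alpha_1}$. Here is where the hypothesis on $R_\alpha$ enters: for every $\beta \in F_{\alpha_1}$, the assumption $D_{\alpha_1} \subseteq D_\beta$ forces $R_{\alpha_1} \subseteq R_\beta$, so $r \in R_\beta$. Thus $F_{\alpha_1} \subseteq T_2 := \{\alpha : r \in R_\alpha\}$, and so $T_2 \in {\cal U}$. Since ${\cal U}$ is a proper filter, $T_1 \cap T_2$ is nonempty, and any $\alpha$ in this intersection satisfies $r \in B_\alpha \cap R_\alpha = D_\alpha \subseteq H$, as required.

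The only substantive step is the one that uses the compatibility condition $D_\alpha \subseteq D_\beta \Rightarrow R_\alpha \subseteq R_\beta$: without it, one could not guarantee that the witness $\alpha_1$ bringing $r$ into some $R_{\alpha_1}$ is ``cofinal enough'' to place $T_2$ in ${\cal U}$, and the argument for the reverse inclusion breaks down. The rest is purely an exercise in packaging the directed limit as an ultrafilter limit, so I do not expect any real obstruction beyond identifying the correct filter base $\{F_\alpha\}$.
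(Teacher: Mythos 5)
Your proof is correct and follows essentially the same route as the paper: you build the ultrafilter from the filter base $\{\beta : D_\alpha \subseteq D_\beta\}$, verify the forward inclusion by picking $\alpha_0$ with $h \in D_{\alpha_0}$, and handle the reverse inclusion by using the compatibility hypothesis to show $r$ lands in $R_\beta$ cofinally, then intersecting the two ultrafilter sets to find a single index where $r \in B_\alpha \cap R_\alpha = D_\alpha$. The only cosmetic difference is that you package the $R_\alpha$-membership as its own ultrafilter set $T_2$ before intersecting, whereas the paper intersects $U_\beta$ directly with $\{\alpha : r \in B_\alpha\}$; the logic is the same.
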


\begin{proof}  For each $\alpha \in {\cal A}$ define $U_\alpha=\{\beta \in {\cal A}:D_\alpha \subseteq D_\beta\}$.
First we claim that the collection $\{U_\alpha:\alpha \in {\cal
A}\}$ extends to  an ultrafilter ${\cal U}$ on ${\cal A}$.  To prove
this, it is enough to show that $\{U_\alpha:\alpha \in {\cal A}\}$
has  the finite intersection property (see for example the proof of
Proposition 3.3.6 in \cite{CK}). Let $\alpha_1,\ldots,\alpha_n \in
{\cal A}$.  Then since $H$ is a directed union of the $D_\alpha$,
there exists $\beta \in {\cal A}$ such that $D_{\alpha_1} + \cdots +
D_{\alpha_n} \subseteq D_\beta$, and hence  $\beta \in  U_{\alpha_1} \cap
\cdots \cap U_{\alpha_n}$. Therefore, $\{U_\alpha:\alpha \in {\cal
A}\}$ satisfies the finite intersection property  and  can be
extended to an ultrafilter ${\cal U}$ on ${\cal A}$.

Next we claim that $H \subseteq (\prod_{\cal U} B_\alpha) \cap R$.
(Recall our convention that all these rings are subrings of the ring
$\prod_{\cal{U}}S$.) Since by assumption $H \subseteq R$, we need
only check that $H \subseteq \prod_{\cal U} B_\alpha$.  Let $h \in
H$.  Then since $H = \bigcup_{\alpha}D_\alpha$, there
exists $\alpha \in {\cal A}$ such that $h \in D_\alpha$, and this
implies that  $h \in D_\beta  \subseteq B_\beta$ for all $\beta \in
U_\alpha$, and hence $U_\alpha \subseteq \{\beta \in {\cal A}:h \in
B_\beta\}$.  Since ${\cal U}$ is a filter and $U_\alpha \in {\cal
U}$, we have $\{\beta \in {\cal A}:h \in B_\beta\} \in {\cal U}$,
so that $h \in \prod_{\cal U}B_\alpha$.  This shows that $H
\subseteq (\prod_{\cal U}B_\alpha) \cap R$.

It remains to verify the reverse inclusion,  $(\prod_{\cal
U}B_\alpha) \cap R \subseteq H$.  Let $r \in (\prod_{\cal
U}B_\alpha) \cap R$.  Then since $R$ is a union of the $R_\alpha$'s,
there exists $\beta \in {\cal A}$ such that $r \in R_\beta$.  Also,
since $r \in \prod_{\cal U}B_\alpha$, we have
 $\{\alpha \in {\cal A}: r \in B_\alpha\} \in {\cal U}$.  Thus since ${\cal U}$ is a filter, $U_\beta \cap \{\alpha \in {\cal A}: r \in B_\alpha\} \in {\cal U}$.  Consequently, since an ultrafilter cannot contain the empty set, there exists $\gamma \in U_\beta$ such that $r \in B_\gamma$.
Since $\gamma \in U_\beta$, we have
$D_\beta \subseteq D_\gamma$.  Thus by assumption, $R_\beta \subseteq
R_\gamma$, so that $r \in B_\gamma \cap R_\gamma = D_\gamma
\subseteq H$.  Thus proves that $(\prod_{\cal U}B_\alpha) \cap R
\subseteq H$, which completes the proof.
\end{proof}

\begin{prop} \label{n case} Let $D \subseteq H \subseteq R$ be domains with $R$ a proper overring of $D$ and $H$ integrally closed, and let $n$ be a positive integer.    If $H$ is a directed union of overrings $A_\alpha$ of $D$  such that  each $A_\alpha$ is an essentially $n$-valuated subring of $R$, then $H$ is an essentially $n$-valuated subring of $R$.
\end{prop}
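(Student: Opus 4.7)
The strategy is to localize at a maximal ideal of $H$ and then apply Lemma~\ref{ultraproduct}. Since it suffices to check the essentially $n$-valuated property at maximal ideals of $\overline{H}$, I fix a maximal ideal $M$ of $\overline{H}=H$. Because $H$ is integrally closed and each $A_\alpha \subseteq H$, we have $\overline{A_\alpha} \subseteq H$ for every $\alpha$, so $H = \bigcup_\alpha \overline{A_\alpha}$ is still a directed union. Setting $M_\alpha := M \cap \overline{A_\alpha}$, the compatibility of localization with directed unions yields $H_M = \bigcup_\alpha \overline{A_\alpha}_{M_\alpha}$ and $R_M = \bigcup_\alpha R_{M_\alpha}$. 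By the hypothesis on $A_\alpha$, for each $\alpha$ there exist valuation overrings $V_1^\alpha,\ldots,V_n^\alpha$ of $A_\alpha$ with
\[ \overline{A_\alpha}_{M_\alpha} = V_1^\alpha \cap \cdots \cap V_n^\alpha \cap R_{M_\alpha}. \]

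Next, I would set $D_\alpha := \overline{A_\alpha}_{M_\alpha}$, $B_\alpha := V_1^\alpha \cap \cdots \cap V_n^\alpha$, and $R_\alpha := R_{M_\alpha}$, all viewed as subrings of the common quotient field $K$ of $D$. To apply Lemma~\ref{ultraproduct} one must verify that $D_\alpha \subseteq D_\beta$ forces $R_\alpha \subseteq R_\beta$: given $s \in \overline{A_\alpha} \setminus M_\alpha$, the inverse $s^{-1}$ lies in $D_\alpha \subseteq D_\beta$ and hence has the form $s^{-1}=a/t$ with $a \in \overline{A_\beta}$ and $t \in \overline{A_\beta} \setminus M_\beta$; then every $r/s \in R_\alpha$ equals $ra/t \in R_\beta$, using that $\overline{A_\beta} \subseteq R$. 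Lemma~\ref{ultraproduct} now furnishes an ultrafilter $\mathcal{U}$ on $\mathcal{A}$ such that $H_M = \bigl(\prod_{\mathcal{U}} B_\alpha\bigr) \cap R_M$ inside $\prod_{\mathcal{U}} K$, with diagonal embeddings understood.

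The final step is to descend from the ultrapower back to $K$. Because ultraproducts commute with finite intersections, $\prod_{\mathcal{U}} B_\alpha = \bigcap_{i=1}^n \prod_{\mathcal{U}} V_i^\alpha$, and because being a valuation ring of a field is a first-order property, each $\prod_{\mathcal{U}} V_i^\alpha$ is a valuation ring of $\prod_{\mathcal{U}} K$ by \L{}o\'{s}'s theorem. Setting $V_i := \bigl(\prod_{\mathcal{U}} V_i^\alpha\bigr) \cap K$, the intersection of a valuation ring of an extension field with a subfield is a valuation ring of the subfield, so each $V_i$ is a valuation domain of $K$. The ultrafilter argument from the proof of Lemma~\ref{ultraproduct} shows that for any $h \in H$ the set $\{\alpha : h \in V_i^\alpha\}$ lies in $\mathcal{U}$, whence $H \subseteq V_i$ and $V_i$ is a valuation overring of $H$. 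Intersecting $H_M = \bigl(\prod_{\mathcal{U}} B_\alpha\bigr) \cap R_M$ with $K$ and using $H_M, R_M \subseteq K$ then yields $H_M = V_1 \cap \cdots \cap V_n \cap R_M$, the desired representation.

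The main technical obstacle is this descent from $\prod_{\mathcal{U}} K$ back to $K$: one must verify that the diagonal restrictions of the ultraproduct valuation rings remain valuation overrings of $H$ itself, and that the intersection structure survives both the ultraproduct construction and the restriction. Both points reduce to \L{}o\'{s}'s theorem and the filter axioms, but they require careful tracking of which ambient ring each subring is being interpreted inside.
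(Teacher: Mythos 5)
Your proof is correct and follows the paper's overall strategy (localize, invoke Lemma~\ref{ultraproduct}, and then express the ultraproduct $\prod_{\cal U}B_\alpha$ as an intersection of $n$ valuation rings before cutting down to the quotient field $K$), but you handle the key decomposition step differently. The paper cites Nagata's theorem to see that each $B_\alpha$, being an intersection of $n$ valuation rings, is a Pr\"ufer domain with at most $n$ maximal ideals, and then invokes the results of Olberding--Saydam to conclude that $\prod_{\cal U}B_\alpha$ is again Pr\"ufer with at most $n$ maximal ideals, hence an intersection of $n$ valuation rings. You instead keep an explicit labeled choice $(V_1^\alpha,\ldots,V_n^\alpha)$, observe that the ultraproduct commutes with the finite intersection defining $B_\alpha$ (which follows from the fact that ultrafilters are closed under finite intersection), and use \L{}o\'{s}'s theorem to see directly that each $\prod_{\cal U}V_i^\alpha$ is a valuation ring of $\prod_{\cal U}K$. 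Your route is more elementary and self-contained, avoiding both the Pr\"ufer machinery and the external citations; the cost is that it depends on a consistent labeling of the $n$-tuples $(V_i^\alpha)$, whereas the paper works with the rings $B_\alpha$ intrinsically. You are also more careful than the paper in one respect: since the definition of ``essentially $n$-valuated'' refers to prime ideals of the integral closure $\overline{A_\alpha}$ rather than of $A_\alpha$ itself, your replacement of the directed family $\{A_\alpha\}$ by $\{\overline{A_\alpha}\}$ makes the appeal to the hypothesis literally match the definition. Your verification that $D_\alpha \subseteq D_\beta$ forces $R_\alpha \subseteq R_\beta$ is correct but slightly more pedestrian than the paper's one-line observation that $R_{P_\alpha} = RD_\alpha$.
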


\begin{proof}  Let $P$ be a prime ideal of $H$, and define:
$$P_\alpha = A_\alpha \cap P \:\:\:\:\:\:\:\:\:\:\:\: D_\alpha =(A_\alpha)_{P_\alpha} \:\:\:\:\:\:\:\:\:\:\:\:  R_\alpha = R_{P_\alpha} = RD_\alpha.$$
 By assumption, for each $\alpha$, there exists an overring $B_\alpha$ of $D_\alpha$ such that $B_\alpha$ is an intersection of $n$ (not necessarily distinct) valuation overrings of $D_\alpha$ and
 $D_\alpha = B_\alpha \cap R_\alpha$.  We use Lemma~\ref{ultraproduct} to show that there exists an ultrafilter ${\cal U}$ on ${\cal A}$ such that $H_P = (\prod_{\cal U}B_\alpha) \cap R_P$.  First, note that $H_P$ is a directed union of the $D_\alpha$, and $R_P$ is a union of the $R_\alpha$.    Moreover, if $D_\alpha \subseteq D_\beta$, then $R_\alpha = R_{P_\alpha} = RD_\alpha \subseteq RD_\beta = R_\beta$.  Therefore, we may apply Lemma~\ref{ultraproduct} to obtain an ultrafilter ${\cal U}$ with $H_P = (\prod_{\cal U}B_\alpha) \cap R_P$.  Now by a theorem of Nagata,  each $B_\alpha$, since it is an intersection of $n$ valuation overrings, is a Pr\"ufer domain having at most $n$ maximal ideals \cite[(11.11), p.~38]{Nagata}.
% (Recall that a domain is Pr\"ufer domain if each localization at a prime ideal is a valuation domain.)
 Thus the ultraproduct $\prod_{\cal U}B_\alpha$ is a Pr\"ufer domain having at most $n$ maximal ideals; see
 \cite[Propositions 2.2 and 4.1]{OS}.  Consequently, this ultraproduct is an intersection of $n$ not necessarily distinct valuation overrings $W_1,\ldots,W_n$. Let $F$ denote the quotient field of $H$, and for each $i=1,2,\ldots,n$, let  $V_i = W_i \cap F$.  Then $H_P = (\prod_{\cal U}B_\alpha) \cap R_P = V_1 \cap V_2 \cap \cdots \cap V_n \cap R_P$.  Since each $W_i$ is a valuation ring,  each $V_i$ is a valuation ring, and this proves the lemma.
 \end{proof}

The next proposition interprets the property of the ring $H$  being essentially $n$-valuated in terms of height $1$ prime ideals of $H$ lying over the maximal ideal of $D$. This connection will become more transparent in the next section when we restrict to overrings of two-dimensional Noetherian domains.

\begin{prop} \label{representation2}  Let $D \subseteq H \subseteq R$ be domains with $R$ a proper overring of $D$ and $H$ integrally closed, and suppose that  there exist  prime ideals $P_1,P_2,\ldots,P_t$ of $H$ with $t>0$ such that $H = H_{P_1} \cap H_{P_2} \cap  \cdots \cap H_{P_t} \cap R$ and each $H_{P_i}$ is a DVR that is irredundant in this representation.  Let  $n$ be a positive integer. If  $t \leq n$, then $H$ is an essentially $n$-valuated subring of $R$.  Otherwise, suppose $n < t$.   Then:
\begin{itemize}
\item[(1)]  $H$ is an essentially $n$-valuated subring of $R$ if and only if $$H = P_{i_i} + P_{i_2} + \cdots + P_{i_{n+1}}$$ whenever $i_1,i_2,\ldots,i_{n+1}$ are distinct members of $\{1,2,\ldots,t\}$.

\item[(2)]
 $H$ is an essentially one-valuated subring of $R$ if and only if the prime ideals $P_1,P_2,\ldots,P_t$ are pairwise comaximal.
\end{itemize}
\end{prop}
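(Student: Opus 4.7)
The plan is to check the essentially $n$-valuated condition at each maximal ideal $M$ of $H$, using the reduction (noted right after the definition of essentially $n$-valuated subring) that it suffices to verify the condition at maximal ideals. As the common first step, I would prove that for every maximal ideal $M$ of $H$,
$$H_M \;=\; \bigcap_{P_i \subseteq M} H_{P_i} \,\cap\, R_M,$$
and that this representation is irredundant among the $H_{P_i}$ with $P_i \subseteq M$. The nontrivial inclusion is obtained by multiplying an element of the right-hand side by a suitable product of elements $t_i \in P_i \setminus M$ for those $P_i$ not contained in $M$, raised to large enough powers, which pushes the product into $H$ while keeping the multiplier outside $M$. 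Local irredundancy at $M$ is inherited from the global irredundancy, because an element witnessing that $H_{P_{i_l}}$ cannot be dropped globally automatically lies in each other $H_{P_{i'}}$, hence in their intersection with $R_M$.

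From this, both the assertion ``$t \leq n$ implies essentially $n$-valuated'' and the $(\Leftarrow)$ direction of (1) follow immediately: whenever $k_M := |\{P_i : P_i \subseteq M\}| \leq n$ at every $M$, take $V_l = H_{P_i}$ for the $P_i \subseteq M$ (padding with repeated copies to reach $n$ valuations). Noting that the condition in (1)---namely $H = P_{i_1} + \cdots + P_{i_{n+1}}$ for all distinct indices---is equivalent to $k_M \leq n$ at every $M$, the content of (1) reduces to the converse. I argue by contradiction: assume $H$ is essentially $n$-valuated, fix a maximal $M$ with $k_M \geq n+1$, and write both $H_M = V_1 \cap \cdots \cap V_n \cap R_M$ and the irredundant local representation $H_M = W_1 \cap \cdots \cap W_{k_M} \cap R_M$, where $W_l = H_{P_{i_l}}$. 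Using local irredundancy, pick $x_l \in \bigcap_{l' \neq l} W_{l'} \cap R_M$ with $x_l \notin W_l$ for each $l$. The key claim, from which $n \geq k_M$ follows since the $W_l$ are pairwise distinct, is that each $W_l$ must coincide with some $V_j$.

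To prove this, assume no $V_j$ equals $W_l$ (the $V_j = F$ case being trivial and removable). For each remaining $V_j$ the center $Q_j$ of $V_j$ on $H$ is not contained in $P_{i_l}$: otherwise $Q_j = P_{i_l}$ (both height one) and $V_j$, as a valuation overring of the DVR $H_{P_{i_l}} = W_l$, would have to equal $W_l$ or $F$. Pick $u_j \in Q_j \setminus P_{i_l}$ in $H$, so that $v_j(u_j) > 0$ (where $v_j$ is the valuation of $V_j$) and $w_l(u_j) = 0$ (where $w_l$ is the valuation of $W_l$), and set $u := \prod_j u_j^{N_j}$ for exponents $N_j$ large enough that $v_j(u) \geq -v_j(x_l)$ whenever $v_j(x_l) < 0$. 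The element $y := u \cdot x_l$ then lies in $R_M$, satisfies $w_l(y) = w_l(x_l) < 0$, and has $v_j(y) \geq 0$ for every $j$. Hence $y \in \bigcap_j V_j \cap R_M = H_M \subseteq W_l$, contradicting $y \notin W_l$. Part~(2) is the specialization $n = 1$: the condition $H = P_i + P_j$ for all distinct $i, j$ is precisely pairwise comaximality.

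The principal obstacle I anticipate is the existence of the exponents $N_j$ in the construction of $u$. When each $V_j$ is a DVR this is immediate by an Archimedean argument, since raising a fixed $u_j$ to a large enough power in $\mathbb{Z}$ exceeds $-v_j(x_l)$; for valuations of rank $\geq 2$, the value of $u_j^{N_j}$ stays in the same archimedean class of $\Gamma_{V_j}$ and may never reach $-v_j(x_l)$, so one must instead replace $u_j$ by an element of a sufficiently high (symbolic) power of $Q_j$ outside $P_{i_l}$---a substitution whose feasibility has to be checked carefully using the structure of the value monoid of $H$ inside $V_j$.
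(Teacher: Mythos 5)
Your first step (the local decomposition $H_M = \bigcap_{P_i\subseteq M}H_{P_i}\cap R_M$, irredundancy passing to the localization) and the observation that the sum-condition in~(1) is just the statement $k_M\le n$ for all $M$ are both correct and match the paper in substance, if not in the exact way the inclusion is verified. The forward direction ($t\le n$) and the $(\Leftarrow)$ half of (1) are also fine.

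The $(\Rightarrow)$ half of (1) has a genuine gap, and it is exactly the one you flag at the end. Your target claim --- that each $W_l=H_{P_{i_l}}$ must literally \emph{equal} one of the $V_j$ --- is stronger than needed, and your element-theoretic route to it fails precisely because the $V_j$ in a representation $H_M=V_1\cap\cdots\cap V_n\cap R_M$ need not be rank-one. If, say, $V_j\subsetneq W_l$ has value group $\mathbb{Z}^2$ ordered lexicographically, with $W_l=(V_j)_{\mathfrak p}$ the rank-one coarsening, then you may well have $v_j(u_j)=(0,1)$ for every $u_j\in Q_j\setminus P_{i_l}$ while $v_j(x_l)=(-1,0)$; no power of $u_j$, and indeed no product of powers of the $u_j$'s, reaches $(1,0)$. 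The proposed repair via symbolic powers does not obviously overcome this, since the obstruction lives in the ordered value group of $V_j$, not in $H$. Moreover, the equality $W_l=V_j$ you aim for is simply not forced: one can always shrink a $V_j$ in a representation to a proper subvaluation ring without changing the intersection, so an essentially $n$-valuated $H$ may be witnessed by $V_j$'s that are not among the $H_{P_i}$.

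The paper sidesteps all of this with a cleaner and strictly structural argument. For each $P_{i_l}\subseteq M$ it localizes the \emph{given} $n$-term representation at $P_{i_l}$: $H_{P_{i_l}}=(V_1)_{P_{i_l}}\cap\cdots\cap(V_n)_{P_{i_l}}\cap R_{P_{i_l}}$. Every ring on the right is an overring of the DVR $H_{P_{i_l}}$, hence equals $H_{P_{i_l}}$ or the quotient field $F$. Not all $(V_j)_{P_{i_l}}$ can be $F$ --- otherwise $R_{P_{i_l}}=H_{P_{i_l}}$, so $R\subseteq H_{P_{i_l}}$, contradicting irredundancy of $H_{P_{i_l}}$. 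Thus some $(V_j)_{P_{i_l}}=H_{P_{i_l}}$, equivalently $V_j\subseteq H_{P_{i_l}}$; this is the containment, not the equality. With $n+1$ of the $P_i$ inside $M$ and only $n$ valuation rings $V_j$, pigeonhole produces one $V_j$ sitting inside two distinct $H_{P_{i_l}}$'s. Since the overrings of the valuation ring $V_j$ are totally ordered and both $H_{P_{i_l}}$'s are DVRs with fraction field $F$, they must be equal, contradicting irredundancy. No value estimates, no Archimedean assumptions, no need to exhibit an element. I would recommend you replace the element-construction in your key claim with this localization-plus-pigeonhole argument, which both closes the gap and shortens the proof.
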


\begin{proof} If $t \leq n$, then since each $H_{P_i}$ is a DVR, and the definition of essentially $n$-valuated subrings does not require the valuation rings in the representation to be distinct, it is the case that $H$ is an essentially $n$-valuated subring of $R$. So suppose for the rest of the proof that $n < t$.  
Statement (2) is immediate from (1), so we need only prove (1).  Suppose that $H = P_{i_i} + P_{i_2} +
\cdots + P_{i_{n+1}}$ whenever $i_1,i_2,\ldots,i_{n+1}$ are distinct
members of $\{1,2,\ldots,t\}$. Let $M$  be  a maximal ideal of $H$.  Then at most  $n$ members of
$\{P_1,P_2,\ldots,P_t\}$ can be contained in $M$.
 Now $H_M = (H_{P_1})_M \cap
\cdots \cap (H_{P_t})_M \cap R_M$. Also, $(H_{P_i})_M = H_{P_i}$ if
and only if $P_i  \subseteq M$.  For these prime ideals $P_i$
contained in $M$, we have $(H_{P_i})_M = H_{P_i}$, while for the
prime ideals $P_j \not \subseteq M$, it must be that $H_{P_j}$ is a
proper subring of $(H_{P_j})_M$, and hence, since $H_{P_j}$ is a
DVR, $(H_{P_j})_M$ is the quotient field of $D$. Therefore, $H_M$
can be represented using $R_M$ and fewer than $n+1$ of the DVRs
$H_{P_i}$.  This shows that $H$ is an essentially $n$-valuated subring of
$R$.

 Conversely, suppose that $H$ is an essentially $n$-valuated subring of $R$.
By way of contradiction, suppose without loss of generality that
  $P_1 + P_2+\cdots + P_{n+1}$ is contained in a maximal ideal $M$ of $H$.
  By assumption we may write
  $$H_M = W_1 \cap W_2 \cap \cdots \cap W_n \cap R_M,$$ where the $W_i$'s are not necessarily distinct valuation overrings of $H_M$.
  Therefore, for each $i=1,2,\ldots,n+1$, since $P_i \subseteq M$, we may localize this representation of $H_M$ to obtain
   $$H_{P_i} = (W_1)_{P_i} \cap (W_2)_{P_i} \cap \cdots \cap (W_{n})_{P_i} \cap R_{P_i}.$$ Since $H_{P_i}$ is a DVR, its only overrings are itself and its quotient field, so one of the rings in the intersection must be equal to $H_{P_i}$. Thus since $R \not \subseteq H_{P_i}$,
   it follows that $H_{P_i} = (W_j)_{P_i}$ for some $j$, which forces
    $W_j\subseteq H_{P_i}$.
    Thus for each $i =1,\ldots,n+1$, there exists $j=1,\ldots,n$, such that $W_j \subseteq H_{P_i}$. Consequently, one of the $W_j$ must be contained in $H_{P_i}$ for two choices of $i$.
   After relabeling, we may assume that $W_1$ is contained in both $H_{P_1}$ and $H_{P_2}$.  However, $H_{P_1}$ and $H_{P_2}$ are DVR overrings of the valuation ring $W_1$, and since the overrings of a valuation ring are totally ordered with respect to inclusion, this forces one of $H_{P_1}, H_{P_2}$ to be contained in the other. But since, as noted above, the only valuation overrings of a DVR are itself and its quotient field, we conclude that $H_{P_1} = H_{P_2}$, contrary to the assumption that no $H_{P_i}$ can be omitted from the representation $H = H_{P_1} \cap H_{P_2} \cap \cdots \cap H_{P_t} \cap R$.  Therefore, $H = P_{i_i}  + \cdots + P_{i_{n+1}}$whenever $i_1,i_2,\ldots,i_{n+1}$ are distinct members of $\{1,2,\ldots,t\}$.
   \end{proof}

\section{Essentially $n$-fibered extensions}

In this section we show how the $n$-fibered and $n$-valuative properties are related.  Specifically, we show in Theorem~\ref{characterization} that when $D$ is an integrally closed local Noetherian domain of dimension $2$ and $f \in D$ is such that $\sqrt{fD}$ is a prime ideal, then every ring between $D$ and $D_f$ is an essentially $n$-valuated subring if and only if every finitely generated $D$-subalgebra of $D_f$ is essentially $n$-fibered.  A first step in this direction is the following technical  lemma regarding the decomposition of rings between $D$ and $D_f$.

\begin{lem} \label{representation}
Let $D$ be a two-dimensional integrally closed local
Noetherian domain,  let ${\ff m}$ be the
maximal
 ideal of $D$, and let $0 \ne f,g \in {\ff m}$.  Assume
  \begin{itemize}

  \item[(i)]
   $H$  is a two-dimensional integrally closed Noetherian overring of $D$ that is contained in $D_f;$

 \item[(ii)]  each prime ideal in $\Ass(f)$  is the radical of a principal ideal;

   \item[(iii)] $g \in \{h \in D:H \subseteq D_h {\mbox{ and }} \Ass(h) \subseteq \Ass(f)\},$ and for any other element $h$ in this set, $|\Ass(g)| \leq |\Ass(h)|$; and

   \item[(iv)]
  $P_1,\ldots,P_t$ are the height $1$ prime ideals of $H$ that contain $g$.

  \end{itemize}
Then $P_1,\ldots,P_t$ are the exceptional prime ideals of the extension $H/D$; $H$ may be represented as  $H =  H_{P_1} \cap \cdots \cap H_{P_t} \cap D_g;$ and no ring in this representation can be omitted. If also $X:=\Ass(f) \setminus \Ass(g)$ is nonempty, then $$(\dagger) \:\:\:\:\:\:\:\: H =
H_{P_1} \cap H_{P_2} \cap \cdots \cap H_{P_t} \cap D_f \cap \left(\bigcap_{{\ff p} \in X} D_{{\ff p}}\right),$$
where also no ring in this representation can be omitted.
%(If $q_i | g$ for all $i$, we interpret the first representation as $H = H_{P_1} \cap  H_{P_2} \cap \cdots \cap H_{P_t} \cap D_g$).
%Let $D$ be an integrally closed local
%Noetherian domain of Krull dimension $2$,  let ${\ff m}$ be the
%maximal
 %ideal of $D$ and suppose $f$ is a prime element in $D$.  Let $H$  be an integrally closed Noetherian domain with $D \subsetneq H \subsetneq D_f %=D[1/f]$, and
 % let $P_1,\ldots,P_t$ be the height $1$ prime ideals of $H$
 % that contain ${\ff m}$.
%Then $$H = H_{P_1} \cap H_{P_2} \cap \cdots \cap H_{P_t} \cap D_f.$$
%Moreover, $P_1,\ldots,{P_n}$ are the exceptional prime divisors of $D$ and for each $i$,   $H/P_i$ has Krull dimension $1$.
\end{lem}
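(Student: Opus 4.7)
The plan is to combine Heinzer's Krull-domain structure of $H$ (fact (2.8)) with the extremality of $g$ in (iii) and the principal-radical hypothesis (ii) to identify the primes $P_1,\ldots,P_t$ with the exceptional prime ideals of $H/D$, and then to read off the two intersection representations.

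\medskip

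Since $H$ is a two-dimensional integrally closed Noetherian overring of $D$, by (2.8) it is a Krull domain, so $H = \bigcap_Q H_Q$ over its height-one primes $Q$. The height-one primes of $H$ containing $g$ are exactly the minimal primes of $gH$, hence finite in number, and by (iv) they are precisely $\{P_1,\ldots,P_t\}$. If $Q$ is a height-one prime of $H$ not containing $g$, then $g$ is a unit in the DVR $H_Q$, so $D_g\subseteq H_Q$; combined with $H\subseteq D_g$ from (iii) this yields
\[
H \;=\; H_{P_1}\cap\cdots\cap H_{P_t}\cap\Bigl(\bigcap_{g\notin Q}H_Q\Bigr) \;=\; H_{P_1}\cap\cdots\cap H_{P_t}\cap D_g.
\]
The crux of showing each $P_i$ is an exceptional prime ideal is $P_i\cap D=\mathfrak m$. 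Suppose instead $\mathfrak p := P_i\cap D$ has height one; then $\mathfrak p\in\mathrm{Ass}(g)\subseteq\mathrm{Ass}(f)$, and since $D_{\mathfrak p}\subseteq H_{P_i}$ with both rings DVRs in $\mathrm{Frac}(D)$, one forces $H_{P_i}=D_{\mathfrak p}$, hence $H\subseteq D_{\mathfrak p}$. Using (ii) to write each $\mathfrak q\in\mathrm{Ass}(g)$ as $\sqrt{a_{\mathfrak q}D}$, set
\[
h \;:=\; \prod_{\mathfrak q\in\mathrm{Ass}(g)\setminus\{\mathfrak p\}}a_{\mathfrak q}\in D.
\]
A direct Krull-representation computation over the height-one primes of $D$ yields $D_h = D_g\cap D_{\mathfrak p}$, so $H\subseteq D_h$; but $\mathrm{Ass}(h)=\mathrm{Ass}(g)\setminus\{\mathfrak p\}\subseteq\mathrm{Ass}(f)$ has strictly fewer elements than $\mathrm{Ass}(g)$, contradicting the minimality in (iii). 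Hence $P_i\cap D=\mathfrak m$, so $H_{P_i}$ is a DVR dominating $D$; the remaining hidden-prime-divisor condition (residue-field transcendence degree one over $D/\mathfrak m$) follows from Abhyankar's inequality together with the fact that $H_{P_i}/P_iH_{P_i}$ is the fraction field of the one-dimensional Noetherian domain $H/P_i$.

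\medskip

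Irredundancy of each $H_{P_i}$ comes from the Krull approximation theorem: one produces $x\in D_g\cap\bigcap_{j\neq i}H_{P_j}$ with $v_{P_i}(x)<0$. Irredundancy of $D_g$ holds because otherwise $H$ would be an intersection of finitely many DVRs, hence one-dimensional, contradicting $\dim H=2$. For $(\dagger)$, the Krull decomposition of $D$ over its height-one primes, together with the partition $\{\text{ht-1 primes of }D\}=\mathrm{Ass}(g)\sqcup X\sqcup\{\mathfrak q\notin\mathrm{Ass}(f)\}$, shows that both $D_g$ and $D_f\cap\bigcap_{\mathfrak p\in X}D_{\mathfrak p}$ equal $\bigcap_{\mathfrak q\notin\mathrm{Ass}(g)}D_{\mathfrak q}$; substituting yields $(\dagger)$. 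Irredundancy of each $D_{\mathfrak p}$ with $\mathfrak p\in X$ and of $D_f$ in $(\dagger)$ is checked by the same minimality lever: dropping any of them would produce an element $h\in D$ with strictly smaller $|\mathrm{Ass}(h)|$ still satisfying $H\subseteq D_h$ and $\mathrm{Ass}(h)\subseteq\mathrm{Ass}(f)$, again violating (iii). The main obstacle is the middle step---constructing $h$ from the $a_{\mathfrak q}$'s and verifying $H\subseteq D_h$---which is where hypotheses (ii) and (iii) interact in an essential way.
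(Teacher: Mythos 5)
Your overall strategy matches the paper's: read the two representations off the Krull decomposition of $H$ (using fact (2.8)), and use the minimality of $g$ in (iii) together with (ii) to construct a competitor $h$ and derive a contradiction. The one organizational difference is minor: you show $P_i\cap D=\mathfrak m$ directly by supposing $P_i\cap D=\mathfrak p$ has height one and then exhibiting $h$ with $\mathrm{Ass}(h)=\mathrm{Ass}(g)\setminus\{\mathfrak p\}$; the paper instead first shows $gH\cap D$ is $\mathfrak m$-primary (by the same contradiction) and then picks $h\in gH\cap D$ with $(g,h)$ $\mathfrak m$-primary so that every prime of $H$ containing $g$ contains $\mathfrak m$. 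Both routes use the same lever and are essentially equivalent; your construction of $h=\prod_{\mathfrak q\in\mathrm{Ass}(g)\setminus\{\mathfrak p\}}a_{\mathfrak q}$ and the identity $D_h=D_g\cap D_{\mathfrak p}$ is exactly the step the paper carries out.

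There is, however, a genuine gap in your irredundancy argument for $(\dagger)$. You claim that dropping any $D_{\mathfrak p}$ with $\mathfrak p\in X$ "would produce an element $h\in D$ with strictly smaller $|\mathrm{Ass}(h)|$," but this is backwards: if $H$ equalled $H_{P_1}\cap\cdots\cap H_{P_t}\cap D_f\cap\bigcap_{\mathfrak q\in X\setminus\{\mathfrak p\}}D_{\mathfrak q}$, that intersection is $H_{P_1}\cap\cdots\cap H_{P_t}\cap D_{g'}$ for $g'$ with $\mathrm{Ass}(g')=\mathrm{Ass}(g)\cup\{\mathfrak p\}$, which has \emph{more} associated primes than $g$, not fewer, so hypothesis (iii) is not violated. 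The correct argument, which the paper uses, is that $(\dagger)$ is just a regrouping of the essential (irredundant) Krull decomposition $H=\bigcap_Q H_Q$: for each $\mathfrak p\in X$ one has $H\subseteq D_g\subseteq D_{\mathfrak p}$, so $D_{\mathfrak p}=H_{Q_{\mathfrak p}}$ for a height-one prime $Q_{\mathfrak p}$ of $H$, and dropping $D_{\mathfrak p}$ from $(\dagger)$ would drop an essential valuation of the Krull domain $H$. (Your argument does work for dropping $D_f$, and your Krull-approximation argument for the irredundancy of the $H_{P_i}$ is fine, if heavier than needed.)

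One smaller point: to conclude that $P_i$ is an \emph{exceptional} prime ideal one must know $H_{P_i}$ is a hidden prime divisor, i.e.\ its residue field has transcendence degree one over $D/\mathfrak m$, and you invoke that $H/P_i$ is one-dimensional without justification. This is precisely the content of (2.3), which requires $H$ to be (the integral closure of) a finitely generated $D$-algebra, a hypothesis not stated in the lemma; the paper's own proof is equally terse on this point, so this is not a deficiency specific to your argument, but it is worth flagging if you want the proof to be fully self-contained.
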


\begin{proof}  Since $D$ is an integrally closed local Noetherian domain of Krull dimension $2$, $D$ is a Cohen-Macaulay ring, and hence associated prime ideals of  nonzero proper principal ideals have height $1$.   In particular,  $\Ass(f)$ and $\Ass(g)$ consist of height one prime ideals of $D$.
Moreover, note that (iv) is not vacuous: There is at least one height $1$ prime ideal containing $g$,
since otherwise $g$ is a unit in $H$,
which since $H \subseteq D_g$, forces $H=D_g$, and  this contradicts the fact that $H$ has dimension $2$.

First we claim that there exists $h \in D$ such that $h/g \in H$ and $(h,g)$ is an ${\ff m}$-primary ideal of $D$.  To this end, we show first that $gH \cap D$ is an ${\ff m}$-primary ideal of $D$.  It must be that  $gH \cap D \ne D$, since otherwise (iii) implies that $H = D_g$, contrary to (i), where $H$ is assumed to have dimension $2$.  Suppose by way of contradiction that $gH \cap D$ is not an ${\ff m}$-primary ideal of $D$.  Since $gH \cap D$ is a proper ideal of $D$ and $D$ is local,
 there must then exist a height $1$ prime ideal ${\ff p}$ of $D$ such that $gH \cap D \subseteq {\ff p}$.   Thus $g \in {\ff p}$, so that by (iii), ${\ff p} \in \Ass(g) \subseteq \Ass(f)$.
Localizing at ${\ff p}$ produces $gH_{\ff p} \cap D_{\ff p} \subseteq {\ff p}D_{\ff p}$.
 Yet since $D_{\ff p}$ is a valuation ring, its fractional ideals are linearly ordered by inclusion, so necessarily $gHD_{\ff p} \subseteq {\ff p} D_{\ff p}$.  In particular, $HD_{\ff p}$ is not the quotient field of $D$, and hence since $D_{\ff p}$ is a DVR (so in particular its only overrings
are itself and its quotient field), we have that $H \subseteq D_{\ff p}$.  Therefore, $H \subseteq D_g \cap D_{\ff p}$.  Since by (ii) and (iii), each prime ideal minimal over $g$ is the radical of a principal ideal, we may choose $h \in D$ such that $\Ass(h) \subseteq \Ass(g)$ and $h$ is in every prime ideal of $D$ containing $g$ except ${\ff p}$. In particular, the  height one prime ideals of $D$ {not} containing $h$ are precisely ${\ff p}$ and  the height  one prime ideals that do not  contain $g$. It follows that $D_h = D_g \cap D_{\ff p}$. (Indeed,
 since $D$ is a Krull domain, then for each $0 \ne a \in D$, the ring $D_a$ is the intersection of the rings $D_{\ff q}$, where ${\ff q}$ ranges over the  height $1$ prime ideals of $D$ that do not contain $a$.)
 But then $H \subseteq D_g \cap D_{\ff p} = D_h$
with $\Ass(h) \subseteq \Ass(g) \subseteq \Ass(f)$ and $|\Ass(h)| < |\Ass(g)|$, which contradicts  (iii).  Therefore, $gH \cap D$ must be an ${\ff m}$-primary ideal of $D$, and since $D/gD$ is a one-dimensional local ring we may choose $h \in gH \cap D$ such that $(g,h)$ is an ${\ff m}$-primary ideal of $D$.  Moreover, since $h \in gH$, we have $h/g \in H$, and this proves the claim that there exists $h \in D$ such that $h/g \in H$ and $(h,g)$ is an ${\ff m}$-primary ideal of $D$.

Next, since $h/g \in H$, it follows that $gH =
(h,g)H$, so that any prime ideal of $H$  containing $g$
contains $(h,g)D$, and hence, since this ideal is ${\ff m}$-primary,
contains also ${\ff m}$. Therefore, each $P_i$ must contain ${\ff m}$, and it follows that $\{P_1,\ldots,P_t\}$ is the set of
 exceptional prime ideals of $H/D$.
%Next, since  $B$ is a
%Noetherian domain of Krull dimension $\leq 2$,
% the integral closure $H$ of $B$ is a Noetherian domain of Krull
%dimension $\leq 2$ (apply the Krull-Akizuki Theorem [reference] or
%use the fact that every Krull overring of a two-dimensional
%Noetherian domain is Noetherian [Heinzer]).
%Since $H$ is a Noetherian domain, there are at most finitely many
%height $1$ prime ideals $P_1,P_2,\ldots,P_t$ of $H$ that contain
%$g$, and so that contain $\ff m$.  (We show eventually that these are all the exceptional prime divisors of the extension $H/D$.)   Moreover, there is at least one
%such prime ideal, or otherwise $g$ is a unit in $H$,
%which since $H \subseteq D_g$ forces $H=D_g$, and  this %contradicts the fact that $H$ has dimension $2$.
Now since $H$ is a
Krull domain, we have $$H = \left(\bigcap_{g \in Q}H_Q \right) \cap
\left( \bigcap_{g \not \in Q} H_Q \right),$$ where $Q$ ranges over
the height $1$ prime ideals of $H$.  Also, since $D \subseteq H
\subseteq D_g$, we have $D_g = H_g$.  Therefore, since a height one prime ideal $Q$ of $H$ contains $g$ if and only if $Q = P_i$ for some $i$, the fact that $H$ is a Krull domain (so that $H$ is the intersection of $H_g$ and the localizations of $H$  at the height one prime ideals of $H$ that do not contain $g$) implies that
$H = H_{P_1} \cap \cdots \cap H_{P_t} \cap D_g.$  Since the decomposition of a Krull domain in terms of localizations at its height one prime ideals is irredundant, no ring can be omitted from this representation of $H$.  Furthermore, since the  set of height one prime ideals of $D$ not containing $g$ is precisely the union of $X$ with the set of height one prime ideals of $D$ not containing $f$, we use the fact again that $D$ is a Krull domain to obtain
 $D_g = D_f \cap (\bigcap_{{\ff p} \in X} D_{\ff p})$.
  The second representation of $H$ given by $(\dagger)$
  %namely
  %$$H =
%H_{P_1} \cap H_{P_2} \cap \cdots \cap H_{P_t} \cap D_f \cap \left(\bigcap_{{\ff p} \in X} D_{{\ff p}}\right),$$
     follows.  This representation, since it also arises from the decomposition of a Krull domain, is  irredundant.
%  Finally, it is clear from the construction that each $P_i$ is an exceptional prime ideal of $H/D$.  On the other hand, if $Q$ is a height one  prime ideal of $H$ distinct from the $P_i$'s, then since $$H_Q = (H_{P_1})_Q \cap \cdots \cap (H_{P_t})_Q \cap (D_g)_Q = (D_g)_Q,$$ it follows that $1/g \in H_Q$, and hence that
%    $g \not \in Q$. Thus, since $g $ is in the maximal ideal $ {\ff m}$ of $Q$, we conclude that   ${\ff m} \not \subseteq Q$, and hence $Q$ is not an exceptional prime ideal of the extension $H/D$.  This shows that the only exceptional prime ideals of $H/D$ are the $P_i$'s.
  \end{proof}

% $H =
% V_1 \cap \cdots \cap V_n \cap D_g$, as claimed.
  %  Finally, we claim that no $V_i$ can be omitted from this representation.  For suppose without loss of generality that $V_2 \cap \cdots \cap V_n \cap H_f \subseteq V_1$.  Then localizing at $P_1$, we have $(V_2)_{P_1} \cap \cdots \cap (V_n)_{P_1} \cap (H_f)_{P_1} \subseteq V_1$.  However, for each $i>1$, $(V_i)_{P_1}$ is a proper overring of the DVR $V_i$, and hence must  be the quotient field of $H$.  But then $1/f \in H_f \subseteq V_1$, contradicting the fact that $P_1$ contains $f$.  Hence, no $V_i$ can be omitted from the representation of $H$

% \rc Let $X^1(H)$ denote the set of height $1$ prime ideals of
%$H$, then we have that:

%$$D = D_{\ff m} \subseteq H = H_{D \setminus \ff m} = \bigcap_{P \in
%X^1(H), {\ff m} \subseteq P}H_P \cap D_f = H_{P_1} \cap \cdots \cap
%H_{P_n} \cap D_f.\footnote{\bc Do we need this in light of the decomposition of $H$ given below?  I'm not sure?}$$  \ec
%Finally, by Proposition~\ref{Evans}, $H/P_i$ has Krull dimension $1$ for each $i$.
%\end{proof}

In the case where $f$ is a prime element of $D$, or more generally, has prime radical, the proposition simplifies to assert that $H =  H_{P_1} \cap \cdots \cap H_{P_t} \cap D_f,$ where  $\{P_1,\ldots,P_t\}$ is the set of  exceptional prime ideals of the extension $H/D$.

Combining Proposition~\ref{representation2} and Lemma~\ref{representation}, we make the connection between the $n$-fibered and $n$-valuated properties more explicit:

\begin{lem}  \label{connection1}
With the same assumptions as
Lemma~\ref{representation}, let
 $k = |\Ass(f) \setminus \Ass(g)|$ and $n$ be a positive integer.
 \begin{itemize}
 \item[(1)] If  $H$ is an essentially $n$-fibered extension of $D$, then ${H}$ is an essentially $(n+k)$-valuated subring of $D_f$.

  \item[(2)]
  If $H$ is an essentially $n$-valuated subring of $D_f$, then $H$ is an essentially $n$-fibered extension of $D$.\end{itemize}\end{lem}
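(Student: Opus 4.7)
The plan for both parts is to invoke the representation $(\dagger)$ of Lemma~\ref{representation},
$$H=H_{P_1}\cap\cdots\cap H_{P_t}\cap D_f\cap\bigcap_{{\ff p}\in X}D_{\ff p},$$
and then localize at a prime ideal $P$ of $H$, tracking which factors collapse to the quotient field and which survive. The workhorse ingredient throughout is that each $H_{P_i}$ is a DVR dominating $D$ (because the $P_i$ are exceptional prime ideals of $H/D$), so that $f\in{\ff m}$ is a non-unit in $H_{P_i}$; in particular $(D_f)_{P_i}\neq H_{P_i}$.

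For part (1), I will fix a prime ideal $P$ of $H$; by Lemma~\ref{representation} the exceptional prime ideals of $H/D$ are precisely $P_1,\dots,P_t$, so the essentially $n$-fibered hypothesis bounds the number of $P_i\subseteq P$ by $n$. After relabeling I may assume $P_1,\dots,P_m\subseteq P$ with $m\leq n$. Localizing $(\dagger)$ at $H\setminus P$, each $H_{P_i}$ with $P_i\subseteq P$ remains unchanged, while each $H_{P_i}$ with $P_i\not\subseteq P$ collapses to the quotient field since $H\setminus P$ contains a non-unit of the DVR $H_{P_i}$. Each $(D_{\ff p})_P$ with ${\ff p}\in X$ is a valuation overring of the DVR $D_{\ff p}$, so equals either $D_{\ff p}$ or the quotient field. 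Discarding the trivial factors then leaves
$$H_P = H_{P_1}\cap\cdots\cap H_{P_m}\cap\bigcap_{{\ff p}\in X'}D_{\ff p}\cap(D_f)_P$$
for some $X'\subseteq X$, exhibiting $H_P$ as the intersection of at most $m+|X'|\leq n+k$ valuation overrings of $H$ together with $(D_f)_P$; padding with repetitions to length $n+k$ supplies the essentially $(n+k)$-valuated representation.

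For part (2), I will argue by contradiction: suppose $H$ is essentially $n$-valuated in $D_f$ but that some maximal ideal $P$ of $H$ contains $n+1$ of the $P_i$, relabeled $P_1,\dots,P_{n+1}$. Writing $H_P=V_1\cap\cdots\cap V_n\cap(D_f)_P$, I will localize at each $P_i$ to obtain
$$H_{P_i}=(V_1)_{P_i}\cap\cdots\cap(V_n)_{P_i}\cap(D_f)_{P_i}.$$
Since $H_{P_i}$ is a DVR whose only overrings are itself and the quotient field, one factor must equal $H_{P_i}$; it cannot be $(D_f)_{P_i}$ because $f$ is a non-unit in $H_{P_i}$ but a unit in $(D_f)_{P_i}$, so some $V_{j(i)}\subseteq H_{P_i}$. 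Pigeonhole then produces distinct indices $i\neq i'$ sharing a common $j$, and the DVRs $H_{P_i}$ and $H_{P_{i'}}$, both overrings of the valuation ring $V_j$, are therefore totally ordered and hence equal, contradicting the irredundancy of the $H_{P_j}$ in the representation of Lemma~\ref{representation}. The main technical subtlety lies in the $+k$ bookkeeping of part (1), which requires me to count the at most $k$ surviving $D_{\ff p}$'s (${\ff p}\in X$) as valuation overrings in the representation rather than absorbing them into the flat piece $(D_f)_P$; the pigeonhole step in (2) closely parallels the argument of Proposition~\ref{representation2} and crucially relies on the hidden-prime-divisor property.
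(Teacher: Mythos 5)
Your proof is correct and follows essentially the same approach as the paper. The only real difference is organizational: the paper invokes Proposition~\ref{representation2} as a black box to handle the localization and pigeonhole steps, whereas you inline those arguments directly (localizing $(\dagger)$ and tracking which factors collapse for part (1), and reproducing the pigeonhole-on-DVR-overrings argument for part (2)); the underlying mechanism is identical.
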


\begin{proof}
Using Lemma~\ref{representation}, write $H$ as in $(\dagger)$, where
% $$(\dagger) \:\:\:\:\:\:\:\: H = H_{P_1} \cap \cdots \cap H_{P_t} \cap D_f \cap (\bigcap_{{\ff p} \in X}D_{\ff p}),$$ where $P_1,\ldots,P_t$ are the exceptional prime ideals of $H/D$ and $X = \Ass(f) \setminus \Ass(g).$
if $k =0$, we omit the last component $\bigcap_{{\ff p} \in X}D_{\ff p}$ of the representation.

(1) Suppose  that the ring $H$ is an essentially $n$-fibered extension of $D$.  Then for each maximal ideal $M$ of $H$, the extension $H_M/D$ has at most $n$ exceptional prime ideals.  Consequently, the sum of any $n+1$ exceptional prime ideals of the extension $H/D$ must be equal to $H$, and so by Proposition~\ref{representation2}, $H$ is an essentially $n$-valuated subring of $D_f \cap (\bigcap_{{\ff p} \in X}D_{\ff p}).$  Thus since $X$ has $k$ elements, $H$ is an essentially $(n+k)$-valuated subring of $D_f$.

(2) Suppose that the ring  $H$ is an essentially $n$-valuated subring of $D_f$.
We claim that the extension $H/D$ is an essentially $n$-fibered extension.
     Write $X = \{ {\ff q}_1,\ldots,{\ff q}_k\}$, and for each $i =1,2,\ldots,k$, let $Q_i = {\ff q}_iD_{{\ff q}_i} \cap H.$  Then since $D_{{\ff q}_i}$ is  a DVR, it follows that for each $i$, $H_{Q_i} = D_{{\ff q}_i}$.  Therefore, we have from $(\dagger)$ that   $$H = H_{P_1} \cap \cdots \cap H_{P_t} \cap H_{Q_1} \cap \cdots \cap H_{Q_k} \cap D_f.$$ The set of height $1$ prime ideals of $H$  containing $f$ is $\{P_1,\ldots,P_t,Q_1,\ldots,Q_k\}$, and by Proposition~\ref{representation2}, any $n+1$ prime ideals in $\{P_1,\ldots,P_t,Q_1,\ldots,Q_k\}$ must sum to $H$.
  Let $M$ be a maximal ideal of $H$ containing ${\ff m}$.  Then
  since any height $1$ prime ideal of $H$ containing the maximal ideal of $D$ must contain $f$,  it follows that the exceptional prime ideals of the extension $H_M/D$ are among $\{P_1,\ldots,P_t,Q_1,\ldots,Q_k\}$, and hence since at most $n$ of these prime ideals are contained in $M$, we conclude that the extension $H_M/D$ has at most $n$ exceptional prime ideals.   Therefore, $H/D$ is an essentially $n$-fibered extension.
\end{proof}

When, as in the lemma,  $fD$ has a prime radical,
then the next theorem shows that  whether the integrally closed rings $H$ with $D \subseteq H
\subseteq D_f$ are $n$-valuated is determined by the finitely
generated $D$-subalgebras of $D_f$, and in particular,  by the rings
that can be represented as an intersection of $D_f$ and finitely many
hidden prime divisors of $D$.
%It is this representation that proves
%to be the most tractable using our methods.

\begin{lem}  \label{very new lemma}
 Let $D$ be a two-dimensional integrally closed local
 Noetherian domain, and suppose $f$ is an element of $D$ such that  $\sqrt{fD}$ is a prime ideal. Then $D_f$ is the only one-dimensional  finitely generated $D$-subalgebra of $D_f$. 
 \end{lem}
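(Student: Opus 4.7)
The plan is to argue by contradiction: suppose $B$ is a one-dimensional finitely generated $D$-subalgebra of $D_f$ with $B \ne D_f$, and set ${\ff p}_0 = \sqrt{fD}$. Since $B \subseteq D_f$ but $B \neq D_f$, the element $f$ cannot be a unit in $B$ (else $D_f = D[1/f] \subseteq B$), so some maximal ideal $M$ of the Noetherian ring $B$ contains $f$. The contraction $M \cap D$ is then a prime of $D$ containing ${\ff p}_0$, and because $D$ is two-dimensional and local, it must equal either ${\ff p}_0$ (of height $1$) or ${\ff m}$. I would rule out each case in turn.

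In the case $M \cap D = {\ff m}$, I plan to apply the dimension formula for a finitely generated extension. The ring $D$ is a two-dimensional normal local Noetherian domain, hence Cohen--Macaulay and in particular universally catenary, so the dimension formula applies to $D \subseteq B$; and since $B$ lies inside the quotient field of $D$, the transcendence degree of $B$ over $D$ is zero. Next, $B/M$ is a field finitely generated as a $D/{\ff m}$-algebra, so by Zariski's form of the Nullstellensatz it is a finite algebraic extension of $D/{\ff m}$ and in particular has transcendence degree zero over $D/{\ff m}$. The dimension formula then forces the height of $M$ to equal $\mathrm{ht}({\ff m}) = 2$, contradicting $\dim B = 1$.

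In the remaining case $M \cap D = {\ff p}_0$ (now necessarily of height $1$), I would localize $B$ at the multiplicative set $D \setminus {\ff p}_0$. Because $M$ is disjoint from $D \setminus {\ff p}_0$, the extension $M B_{D \setminus {\ff p}_0}$ is a proper nonzero ideal. On the other hand, $B_{D \setminus {\ff p}_0}$ is an overring of the DVR $D_{{\ff p}_0}$, so it equals either $D_{{\ff p}_0}$ itself or the quotient field $K$ of $D$; the existence of the proper nonzero ideal $M B_{D \setminus {\ff p}_0}$ rules out $K$, forcing $B \subseteq D_{{\ff p}_0}$. Combining this with $B \subseteq D_f$, and using the Krull representation $D = \bigcap_{\mathrm{ht}({\ff q}) = 1} D_{\ff q}$ together with the fact that ${\ff p}_0$ is the only height-one prime of $D$ containing $f$, one obtains $D_{{\ff p}_0} \cap D_f = D$. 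Thus $B = D$, contradicting $\dim B = 1 \ne 2 = \dim D$.

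The main obstacle is the dimension-formula argument in the first case, since it requires both the universally catenary hypothesis on $D$ (obtained from Cohen--Macaulayness of two-dimensional normal local Noetherian rings) and Zariski's Nullstellensatz input for $B/M$. The other case, by contrast, reduces routinely to the structure of overrings of a DVR and the standard Krull-domain description of $D$.
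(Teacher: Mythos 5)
Your proof is correct, and the overall strategy mirrors the paper's: argue by contradiction, observe that some prime $M$ of $B$ contracts to a prime of $D$ that contains $f$ and hence contains $\sqrt{fD}$, split into the two cases $M\cap D = \mathfrak{m}$ and $M\cap D = \sqrt{fD}$, and in the second case exploit that the only overrings of the DVR $D_{\sqrt{fD}}$ are itself and the quotient field, together with the Krull-domain identity $D_{\sqrt{fD}} \cap D_f = D$, to force $B = D$, which contradicts $\dim B = 1$. The one genuine divergence is how the first case is dispatched. The paper quotes Evans' version of Zariski's Main Theorem (set up in its paragraph (2.3)) to assert directly that no prime of a one-dimensional finitely generated birational extension of $D$ can lie over $\mathfrak{m}$. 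You instead run the dimension formula: $D$ is a two-dimensional normal local Noetherian domain, hence Cohen--Macaulay and therefore universally catenary, so the formula applies to $D \subseteq B$; since $B$ lies in the quotient field, $\operatorname{tr.deg}_D B = 0$, and since $B/M$ is a field finitely generated as a $D/\mathfrak{m}$-algebra, Zariski's Nullstellensatz gives $\operatorname{tr.deg}_{D/\mathfrak{m}} B/M = 0$, forcing $\operatorname{ht}(M) = 2$, which contradicts $\dim B = 1$. Both arguments are sound. The paper's ZMT route is shorter and reuses a fact it has already stated; yours substitutes more standard textbook machinery (universally catenary + dimension formula + Nullstellensatz) at the cost of a slightly longer chain, but arguably makes the argument more self-contained if one prefers not to invoke ZMT. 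The Case 2 argument via localizing $B$ at $D \setminus \sqrt{fD}$ is a mild repackaging of the paper's $H_P = D_{P\cap D}$ step and is equally valid.
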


\begin{proof} 
Let $H$ be a  one-dimensional finitely generated $D$-subalgebra  of $D_f$.    
Then by 
Zariski's Main Theorem, no prime ideal of $H$ lies over  the maximal ideal of $D$ \cite[Theorem, p.~45]{Evans}. Suppose by way of contradiction that  ${H} \subsetneq D_f$. Then there is a prime ideal $P$ of ${H}$ such that $P \cap D$ contains $f$ (since otherwise, $D_f \subseteq H$), and hence $P \cap D$ is  a height one prime ideal of $D$ containing $f$, which since $\sqrt{fD}$ is a prime ideal, implies $P \cap D = \sqrt{fD}$.  But also
 $D_{P \cap D}$ is a DVR that is contained in $H_P$, which, since the only overrings of a DVR are itself and its quotient field, forces $D_{P \cap D} = H_P$. However, this implies that $H \subseteq D_{P \cap D} \cap D_f = D$, and hence $H = D$, a contradiction to the assumption that $H$ has dimension one. Consequently,  $H = D_f$. 
 %Therefore, since $R$ has normalization $H$, if $M$ is a maximal ideal of $H$, then $M \cap D$ is a height one prime ideal of $D$, so that $D_{M \cap D}$ is a DVR contained in $R_M$. The fact that $D_{M \cap D}$ is a DVR forces then $D_{M \cap D} = R_M$. Therefore, $R_M$ is a DVR for each maximal ideal $M$ of $R$, so that $R$ is integrally closed.  
 \end{proof}

 \begin{thm} \label{characterization}
 %\footnote{\bc I don't see how to generalize the theorem to the case where the radical of $fD$ is not prime?  The theorem is stated more succinctly here than before because of the new terminology.}
 Let $D$ be a two-dimensional integrally closed local
 Noetherian domain.  Suppose that $n$ is a positive integer and $f$ is an element of $D$ such that  $\sqrt{fD}$ is a prime ideal. Then the following statements are equivalent.

\begin{itemize}

\item[(1)]  Every $D$-subalgebra of $D_f$ is essentially $n$-valuated.

\item[(2)]  Every finitely generated $D$-subalgebra of $D_f$ is essentially $n$-valuated.

%\item[(2)] Every  overring $H$ of $D$ of the form $H = V_1 \cap V_2 \cap \cdots \cap V_t \cap D_f$, where each $V_i$ is  a hidden prime divisor of $D$,
  %  is  an essentially $n$-valuated subring of $D_f$.\footnote{Omit}

\item[(3)]  Every finitely generated $D$-subalgebra  of $D_f$ is essentially $n$-fibered.

%\item[(4)]  For each $0 \ne g \in D$ and $k>0$, the extension $D[g/f^k]$ is  essentially $n$-fibered.\footnote{{\bc I can't get the proof to work to show that (4) is equivalent to the others.  It may not be equivalent; I'm beginning to doubt it is. However, this equivalence is not used anywhere in the else in the paper, so if it doesn't work, it won't affect anything else.}}

%\item[(4)]  For every domain $H$ that is the integral closure  of a finitely generated $D$-subalgebra of $D_f$, and every collection $\{P_1,P_2,\ldots,P_{n+1}\}$ of distinct exceptional prime ideals of the extension $H/D$, it must be that $P_1 + P_2 + \cdots + P_{n+1} = H$.

\end{itemize}
\end{thm}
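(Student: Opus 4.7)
The plan is to prove (1) $\Rightarrow$ (2) trivially, (2) $\Leftrightarrow$ (3) as a consequence of Lemma~\ref{connection1}, and (2) $\Rightarrow$ (1) via the ultraproduct Proposition~\ref{n case}. The implication (1) $\Rightarrow$ (2) is immediate since every finitely generated $D$-subalgebra of $D_f$ is a $D$-subalgebra of $D_f$. The single structural observation that makes the remaining two implications go through cleanly is that the primality of $\sqrt{fD}$ collapses the parameter $k = |\Ass(f)\setminus\Ass(g)|$ appearing in Lemma~\ref{connection1} to zero, thereby eliminating the gap between the fibered and valuated indices.

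For (2) $\Leftrightarrow$ (3), fix a finitely generated $D$-subalgebra $H$ of $D_f$ and note that both conditions depend only on $\overline{H}$. Since $H$ is finitely generated over the Noetherian ring $D$, $H$ is itself Noetherian; Mori-Nagata then gives that $\overline{H}$ is a Krull domain, and Heinzer's theorem (2.8) confirms that $\overline{H}$ is Noetherian. If $\overline{H}$ has Krull dimension $2$, I would apply Lemma~\ref{representation} with $g=f$: hypothesis~(i) is satisfied by construction, hypothesis~(ii) follows from $\sqrt{fD}$ being prime, and the minimality condition~(iii) is automatic because $|\Ass(f)|=1$ is already minimal. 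Thus $X=\Ass(f)\setminus\Ass(g)=\emptyset$, i.e.\ $k=0$, and Lemma~\ref{connection1} then yields both directions of the equivalence for this $H$. If instead $\overline{H}$ has dimension~$1$, then so does $H$, and Lemma~\ref{very new lemma} forces $H=D_f$; then $\overline{H}=D_f$ has no height-one prime lying over $\m$ and hence no exceptional prime ideals, so $\overline{H}$ is trivially essentially $n$-fibered, while $\overline{H}_P=(D_f)_P=R_P$ for every prime $P$ shows it is trivially essentially $n$-valuated.

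For (2) $\Rightarrow$ (1), let $H$ be an arbitrary $D$-subalgebra of $D_f$. Because being essentially $n$-valuated depends only on $\overline{H}$, I may replace $H$ by $\overline{H}$ and assume $H$ is integrally closed. Then $H$ is the directed union of the finitely generated $D$-subalgebras $A_\alpha$ it contains, each of which is a finitely generated $D$-subalgebra of $D_f$ and so is essentially $n$-valuated by hypothesis~(2). Applying Proposition~\ref{n case} with $R=D_f$, which is a proper overring of $D$ since $f\in\m$, delivers that $H$ is essentially $n$-valuated. The main work has already been packaged into Lemmas~\ref{representation} and~\ref{connection1} and Proposition~\ref{n case}; the only nontrivial steps left in the proof are verifying that the hypotheses of Lemma~\ref{representation} hold with the choice $g=f$ (which is where primality of $\sqrt{fD}$ is used decisively) and disposing of the one-dimensional degenerate case via Lemma~\ref{very new lemma}.
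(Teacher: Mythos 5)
Your proof is correct and follows essentially the same route as the paper's: (1) $\Rightarrow$ (2) trivially, (2) $\Leftrightarrow$ (3) via Lemma~\ref{connection1} with $g=f$ (so $k=0$) together with Lemma~\ref{very new lemma} for the one-dimensional degenerate case, and (2) $\Rightarrow$ (1) via a directed union and Proposition~\ref{n case}. The only thing worth noting for precision is that hypothesis~(iii) of Lemma~\ref{representation} with $g=f$ requires $H\ne D$ (for $H=D$ a unit $h$ would give $|\Ass(h)|=0<1$), but the case $H=D$ is trivially essentially $n$-valuated and $n$-fibered, and the paper's own proof is silent on this same edge case.
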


\begin{proof} (1) $\Rightarrow$ (2) This is clear.

(2) $\Rightarrow$  (3) Let $H$ be the integral closure of a finitely generated $D$-subalgebra of $D_f$. If $H$ has dimension one, then by Lemma~\ref{very new lemma}, $H = D_f$, 
%
% Suppose first that $H$ has dimension one.
%Then by Evans' version of
%Zariski's Main Theorem, no prime ideal of $H$ lies over  the maximal ideal of $D$ \cite[Theorem, p.~45]{Evans}. Suppose by way of contradiction that  ${H} \subsetneq D_f$. Then there is a prime ideal $P$ of ${H}$ such that $P \cap D$ contains $f$, and hence $P \cap D$ is the radical of $fD$.  But also
% $D_{P \cap D}$ is a DVR that is contained in $H_P$, which forces $D_{P \cap D} = H_P$. However, this implies that $H \subseteq D_{P \cap D} \cap D_f = D$, a contradiction to the assumption that $H$ has dimension one. Consequently, when $H$ has Krull dimension $1$, then  $H = D_f$, 
and hence $H$ is vacuously  essentially $n$-fibered. On the other  hand, if  $H$ has Krull  dimension $2$, then by Lemma~\ref{connection1} (since $k=0$ in the present case), $H$ is essentially $n$-fibered.

(3) $\Rightarrow$ (2)  Let $H$ be the integral closure of a finitely generated $D$-subalgebra of $D_f$. If $H$ has Krull dimension $1$, then as above, $H = D_f$, and (2) holds  trivially. Otherwise, if $H$ has dimension $2$, then by Lemma~\ref{connection1}, with again $k=0$, we have that $H$ is essentially $n$-valuated.

(2) $\Rightarrow$ (1)  Let $H$ be an integrally closed domain with
$D \subseteq H \subseteq D_f$.  Let $\{F_\alpha:\alpha \in {\cal
A}\}$ be the collection of all finite subsets of $H$, and  for each
$\alpha$, let $D_\alpha=D[F_\alpha]$.   Then $H$ is a directed union of the
$D_\alpha$, and by (2), each $D_\alpha$ is an essentially $n$-valuated
subring of $D_f$, so that by Proposition~\ref{n case}, $H$ is an essentially
$n$-valuated subring of $D_f$.
\end{proof}

\begin{rem} {\em
Although we do not make use of it, we mention here that the choice of valuations in the local representation of $H$ is unique, when uniqueness is formulated carefully.  In general, let $H \subseteq R$ be integrally closed overrings of a two-dimensional Noetherian domain $D$ such that $H = V_1 \cap \cdots \cap V_n \cap R = W_1 \cap \cdots \cap W_m \cap R$, where the $V_i$ and $W_j$ are valuation overrings of $H$.  After replacing some of the $V_i$ and $W_j$ with proper overrings where necessary, we may assume that the  $V_i$ and $W_j$ are {\it strongly irredundant} in their respective representations, meaning no $V_i$ or $W_j$ can be replaced in the representation by a proper overring.    Then $\{V_1,\ldots,V_n\} = \{W_1,\ldots,W_m\}$ \cite[Corollary 5.6]{OlbI}.  More general uniqueness results can be found in \cite{OlbI} and \cite{OlbS}.}
\end{rem}

%As a special case of the lemma (the case where $k=0$), we have:
%
%\begin{lem}  \label{connection}
%Let $D$ be a two-dimensional integrally closed local
%Noetherian domain, let $0 \ne f$ be a nonunit in $D$
%and let
%   $H$  be a two-dimensional integrally closed Noetherian domain  with $D \subsetneq H \subseteq D_f.$
%Suppose that $\sqrt{fD}$ is a prime ideal of $D$.
%    Then  $H$ is an essentially $n$-fibered extension of $D$ if and only if  ${H}$ is an essentially $n$-valuated subring of $D_f$.
%\end{lem}

Using Theorem~\ref{characterization}, we now prove Theorem~\ref{first theorem}.
\smallskip

{\it Proof of Theorem~\ref{first theorem}.} The equivalence of (2) and (3) when $fD$ has prime radical follows from Theorem~\ref{characterization}. Thus it remains to prove the equivalence of (1) and (2).
Before doing so, we first observe that (with $D$ as in the theorem), if $\pi:X \rightarrow \Spec(D)$ is a modification with normalization $\overline{\pi}:\overline{X} \rightarrow \Spec(D)$, then the induced morphism $\overline{\pi}_f:\overline{X}_f \rightarrow \Spec(D_f)$ is an isomorphism of schemes, and hence there is an open immersion $\Spec(D_f) \rightarrow \overline{X}$. For
 if ${\ff p} \in \Spec(D)$ with $f \not \in {\ff p}$, then necessarily $D$ has height $1$, so that $D_{\ff p}$ is a DVR. Thus by the Valuative Criterion for Properness, $D_{\ff p}$ must dominate the local ring of a point in $X$, hence also a point $x$ in $\overline{X}$ which maps under $\overline{\pi}$ to ${\ff p}$.  It follows that ${\cal O}_{\overline{X},x} = D_{\ff p}$.
Moreover, if $y \in \overline{X}_f$, then since $f$ is in the maximal ideal of $D$ and $D$ has dimension $2$, then ${\cal O}_{\overline{X},y}$ dominates $D_{\ff q}$ for ${\ff q}$ a height one prime ideal of $D$, so that since $D_{\ff q}$ is a DVR and ${\cal O}_{\overline{X},y}$ is an overring of $D_{\ff q}$, we conclude ${\cal O}_{\overline{X},y} = D_{\ff q}$.
  These considerations shows that $\overline{\pi}_f$ is an isomorphism of schemes.

Now to see that (1) implies (2),
 let $H$ be the integral closure of a finitely generated $D$-subalgebra of $D_f$. Then there exists an ideal $I$
 of $D$ and $k>0$ such that $f^k \in I$ and $H = \overline{D[If^{-k}]}$.  Let $u$ be an indeterminate for $D$, and let $X=$ Proj$(D[Iu])$, so that
the induced morphism $\pi:X \rightarrow \Spec(D)$ is the blow-up of $\Spec(D)$ along the ideal $I$.
 The morphism $\pi$ is clearly birational; also, as a  projective morphism, $\pi$ is proper. Hence $\pi$ is a modification of $\Spec(D)$. Moreover, $\Spec(H)$ is an affine open subscheme  of $\overline{X}$.
Since $H \subseteq D_f$, then the induced morphism $\Spec(D_f) \rightarrow \Spec(H) \subseteq \overline{X}$ is an open immersion, and since the map $\overline{\pi}_f$ defined above is an isomorphism of schemes, it follows that $\overline{X}_f   \subseteq \Spec({H}) \subseteq \overline{X}$.
% We claim that $\Spec(H)$ contains $\overline{X}_f$. For  since $D$ is a two-dimensional integrally closed local Noetherian domain, then $D_f$ is a Dedekind domain, and hence it follows from
%the Valuative Criterion for Properness that there is an open immersion of $\Spec(D_f)$ into $\overline{X}$ whose image is $\overline{X}_f$.  Thus since $H \subseteq D_f$, then $\overline{X}_f \subseteq \Spec(H)$.
Finally, let ${\ff p}_1,\ldots,{\ff p}_t$ be the height one prime ideals of $H$ lying over ${\ff m}$.  For each $i$, let $C_i$ be the Zariski closure of $\{ {\ff p}_i\}$ in $\Spec(H)$.
    Then by (1), at most $n$ of the $C_i$ meet in a point in $\Spec(H)$, and hence at most $n$ of the prime ideals ${\ff p}_1,\ldots,{\ff p}_t$ are contained in a maximal ideal of $H$.

Finally, to prove that (2) implies (1),  let  $\pi:X \rightarrow \Spec(D)$ be a modification, and let $U$ be an affine open subscheme of $\overline{X}$ containing $\overline{X}_f$.
Since $X$ is the normalization of a scheme of finite type over $\Spec(D)$, there exists a finitely generated $D$-subalgebra
 of the quotient field of $D$ whose normalization $H$ is such that $U = \Spec(H)$. Moreover, since $\overline{X}_f \subseteq U$, then $H = {\cal O}_{\overline{X}}(U) \subseteq {\cal O}_{\overline{X}}(\overline{X}_f) \subseteq  D_f$. (This last containment follows from the
 fact that the map $\overline{\pi}_f:\overline{X}_f \rightarrow \Spec(D_f)$ defined above is an isomorphism of schemes.)
Now let $C_1, \ldots, C_t$ be the irreducible components of $\overline{\pi}^{-1}({\ff m})$, and let $\xi_1,\ldots,\xi_t$ be the corresponding generic points of these components. Suppose that $t \geq n+1$ and there exists a point $x \in U$ that is in the intersection of $n+1$ of these components. Without loss of generality, $x \in C_1 \cap \cdots \cap C_{n+1} \cap U$, and  since $U$ is an open set, then $\xi_1,\ldots,\xi_{n+1}  \in U$.  Let ${\ff p}_1,\ldots,{\ff p}_{n+1}$ be the prime ideals of $H$ corresponding to $\xi_1,\ldots,\xi_{n+1}$, respectively, and let ${\ff q}$ be the prime ideal corresponding to $x$. Then since $x$ is in the closure of each $\xi_i$, it follows that ${\ff p}_i \subseteq {\ff q}$ for each $i=1,\ldots,n+1$.  As the normalization of a finitely generated algebra over $D$, $H$ has Krull dimension at most $2$, so each ${\ff p}_i$ is a height one prime ideal of $H$ lying over ${\ff m}$, while ${\ff q}$ is a maximal ideal of $H$ containing each ${\ff p}_i$. But this is impossible by (2), so at most $n$
irreducible components  of  $\overline{\pi}^{-1}({\ff m})$ meet in $U$.  This proves Theorem~\ref{first theorem}. $\:\:\:\:\:\square$

\section{Passage to the completion}

In this section we consider how the  $n$-valuated property
transfers to the completion $\widehat{D}$ of $D$ in the ${\ff
m}$-adic topology, where ${\ff m}$ is the maximal ideal of the local ring $D$. We
see in the next section that when $D$ is a two-dimensional regular
local ring, then passage to the completion $\widehat{D}$ simplifies
things, and allows us to apply MacLane's construction of hidden
prime divisors via key polynomials.

But first we need a lemma that shows there is a bijection between
the hidden prime divisors of $D$ and those of $\widehat{D}$, when
$D$ is analytically irreducible (that is, when $\widehat{D}$ is a
domain).  More precisely, if $D$ is an analytically irreducible
local Noetherian domain with maximal ideal ${\ff m}$ and Krull
dimension $d>1$, then there is a one-to-one correspondence between
hidden prime divisors of $D$ and those of $\widehat{D}$.  Part of
this claim is proved by Abhyankar in \cite[pp.~513--514]{A}, and the
remaining details  are included by G\"ohner in \cite[Lemma
1.10]{Go}.   A self-contained presentation of the lemma can be found in \cite[Proposition 9.3.5]{SH}.

%However, the latter article has a standing hypotheses
%that $D$ shares some properties with excellent rings, so for the sake of completeness,  we
%elaborate on the proof given in \cite{Go}:

\begin{lem} \label{Abhyankar} {\em (Abyhankar, G\"ohner)} Let $D$ be an analytically irreducible local Noetherian domain of Krull dimension $d>1$.   Then there is a one-to-one correspondence between the hidden prime divisors $V$ of $D$ and the hidden prime divisors $W$ of  $\widehat{D}$ given by $V \mapsto V'$, where   $V'$ is any extension of $V$ to a valuation overring of $\widehat{D}$, and $W \mapsto W \cap F$, where $F$ is the quotient field of $D$.  Under this correspondence $V$ and $V'$ have the same value group and residue field.
\end{lem}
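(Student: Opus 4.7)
The plan is to construct the forward map $V \mapsto V'$ via the completion of $V$, use density arguments in the $\widehat{\m}$-adic topology to handle the inverse map and preservation of invariants, and derive uniqueness from the canonical construction. Let $F$ and $\widehat{F}$ be the quotient fields of $D$ and $\widehat{D}$, identify $F \subseteq \widehat{F}$ via $D \subseteq \widehat{D}$, and set $k = D/\m = \widehat{D}/\widehat{\m}$.

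For the forward direction, given a hidden prime divisor $V$ of $D$, let $\widehat{V}$ denote its $\m_V$-adic completion, again a DVR with value group and residue field equal to those of $V$. The essential classical input, presented self-containedly in \cite[Prop.~9.3.5]{SH}, is that the natural ring homomorphism $\phi:\widehat{D}\to \widehat{V}$ induced by $D \subseteq V$ is injective---Zariski's analytic irreducibility for prime divisors of the second kind. Granting this, $\phi$ extends to a field embedding $\widehat{F} \hookrightarrow \mathrm{Frac}(\widehat{V})$, and I set
\[
V' := \widehat{V} \cap \widehat{F}.
\]
Then $V'$ is a valuation ring of $\widehat{F}$ containing $\widehat{D}$, with $V' \cap F = \widehat{V} \cap F = V$. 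Its value group sits between that of $V$ (via $V \subseteq V'$) and that of $\widehat{V}$, both $\mathbb{Z}$, so $V'$ is a DVR sharing a uniformizer $\pi$ with $V$. The chain $V/\pi V \hookrightarrow V'/\pi V' \hookrightarrow \widehat{V}/\pi \widehat{V} = V/\pi V$ forces equality of residue fields. Since $\m \subseteq \m_V \subseteq \pi V'$ and $\widehat{\m} = \m\widehat{D}$, the DVR $V'$ dominates $\widehat{D}$, and hence $V'$ is a hidden prime divisor of $\widehat{D}$ with the same value group and residue field as $V$, settling the last sentence of the lemma.

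For the inverse direction, given a hidden prime divisor $W$ of $\widehat{D}$, set $U := W \cap F$, a valuation ring of $F$ dominating $D$ of rank at most $1$. The main technical step is showing $\kappa(U) = \kappa(W)$: given $z \in W$, write $z = a/b$ with $a,b \in \widehat{D}$ and $v_W(a) \geq v_W(b)$, and by density of $D$ in $\widehat{D}$ in the $\widehat{\m}$-adic topology (together with $\widehat{\m} \subseteq \m_W$), choose $a',b' \in D$ with $a-a', b-b' \in \widehat{\m}^N$ for $N$ large relative to $v_W(a), v_W(b)$. For such $N$, the element $u := a'/b' \in F$ satisfies $v_W(u) \geq 0$ (so $u \in U$) and $u - z \in \m_W$, giving $\overline{u} = \overline{z}$ in $\kappa(W)$. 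Hence $\kappa(U) = \kappa(W)$, which has transcendence degree $d-1$ over $k$, and Abhyankar's equality case forces $U$ to be a DVR with finitely generated residue extension, so $U$ is a hidden prime divisor of $D$. For uniqueness of the extension in the forward map: if $\widetilde{V}$ is any valuation overring of $\widehat{D}$ with $\widetilde{V} \cap F = V$, the inverse-direction argument applied to $\widetilde{V}$ recovers $V$ and identifies $\widetilde{V}$ as a DVR sharing uniformizer $\pi$ and residue field $\kappa(V)$ with $V$; a $\pi$-adic Cauchy approximation using completeness of $\widehat{V}$ and the injection $\widehat{D} \hookrightarrow \widehat{V}$ then embeds $\widetilde{V}$ into $\widehat{V} \cap \widehat{F} = V'$, and equality of value groups on $\widehat{F}$ gives $\widetilde{V} = V'$.

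The main obstacle I expect is the injectivity of $\phi:\widehat{D} \to \widehat{V}$---Zariski's analytic irreducibility for prime divisors of the second kind---which is the non-trivial geometric content underlying the entire correspondence. The density argument in the inverse direction, while technical, is largely routine once the valuations are carefully set up, and the same holds for the uniqueness verification and the preservation of value groups and residue fields.
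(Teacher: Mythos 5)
The paper does not prove this lemma; it explicitly defers to Abhyankar \cite{A}, G\"ohner \cite[Lemma 1.10]{Go}, and the self-contained account in Swanson--Huneke \cite[Proposition 9.3.5]{SH}, so there is no internal argument to compare against. Your reconstruction follows the standard route in broad outline --- pass to $\widehat V$, use Zariski's analytic irreducibility of a prime divisor over $\widehat D$ to get an injection $\widehat D\hookrightarrow\widehat V$, set $V'=\widehat V\cap\widehat F$, and come back down via a density argument --- and you correctly isolate the injectivity of $\widehat D\to\widehat V$ as the genuinely deep input. One small inefficiency: invoking Abhyankar's equality case to conclude that $U$ is a DVR is unnecessary, since $U=W\cap F$ has value group a nontrivial subgroup of $\Gamma_W=\mathbb Z$ and is therefore a DVR automatically; the density step $\kappa(U)=\kappa(W)$ then already yields transcendence degree $d-1$, which is all the paper's definition of hidden prime divisor asks for.

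The uniqueness paragraph is where your argument has a real gap as written. You cannot run the inverse-direction argument on an arbitrary valuation overring $\widetilde V$ of $\widehat D$ extending $V$, since that argument presupposes a DVR with residue field of transcendence degree $d-1$, and the assertion that a ``$\pi$-adic Cauchy approximation embeds $\widetilde V$ into $\widehat V\cap\widehat F$'' is neither made precise nor obviously true. What is actually needed for the lemma (and all that is used later in the paper) is injectivity of $W\mapsto W\cap F$ on hidden prime divisors, and this follows from the density trick you already use elsewhere: if $W_1,W_2$ are hidden prime divisors of $\widehat D$ with $W_1\cap F=W_2\cap F=V$, then for $z\in\widehat D$ and $n\geq\max(v_{W_1}(z),v_{W_2}(z))$, choosing $d\in D$ with $z-d\in\widehat{\mathfrak m}^{\,n+1}\subseteq\mathfrak m_{W_i}^{\,n+1}$ gives $v_{W_i}(z)=v_{W_i}(d)=v_V(d)$ for $i=1,2$; since $\widehat F$ is the fraction field of $\widehat D$ and both value groups are $\mathbb Z$, the valuations coincide and $W_1=W_2$. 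Phrase the uniqueness this way rather than via a containment $\widetilde V\subseteq V'$. Finally, note the mild circularity of citing \cite[Prop.~9.3.5]{SH} for the injectivity of $\widehat D\to\widehat V$: that proposition \emph{is} the result being proved, so cite \cite{A} or Zariski's original work for that particular input.
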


The next lemma is an application of the preceding lemma and the
approximation theorem for valuations.

\begin{lem} \label{approximation}  Let $D$ be an analytically irreducible local Noetherian domain of Krull dimension $>1$, and let $v_1,v_2,\ldots,v_n$ be valuations corresponding to hidden prime divisors of $\widehat{D}$.  Then for each $z \in \widehat{D}$ there exists $d \in D$ such that $v_i(z) = v_i(d)$ for every $i=1,2,\ldots,n$.
\end{lem}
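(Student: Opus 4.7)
The plan is to combine the $\mathfrak{m}$-adic density of $D$ in $\widehat{D}$ with the fact that each DVR $V_i$ corresponding to $v_i$ dominates $\widehat{D}$.  If $z=0$, take $d=0$, so assume $z\ne 0$; then each value $v_i(z)$ is a non-negative integer, because $\widehat{D}\subseteq V_i$.

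First I would observe that, since $V_i$ dominates $\widehat{D}$, $v_i(y)\ge 1$ for every $y\in\widehat{\mathfrak{m}}$, and therefore $v_i(y)\ge N$ for every $y\in\widehat{\mathfrak{m}}^N$ and every positive integer $N$, via the non-Archimedean inequality applied to finite sums of products of $N$ elements of $\widehat{\mathfrak{m}}$.  Next, set $N := 1+\max_{i} v_i(z)$, which is finite since $z\ne 0$.  By the defining property of the $\mathfrak{m}$-adic completion, the natural map $D/\mathfrak{m}^N\to\widehat{D}/\widehat{\mathfrak{m}}^N$ is an isomorphism, so there exists $d\in D$ with $z-d\in\widehat{\mathfrak{m}}^N$.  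Then $v_i(z-d)\ge N>v_i(z)$ for every $i$, and applying the strict ultrametric inequality to $d=z-(z-d)$ yields $v_i(d)=v_i(z)$ for every $i$, as required.

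The only mildly subtle point is the observation that domination produces a uniform lower bound $v_i(\widehat{\mathfrak{m}}^N)\ge N$ \emph{simultaneously} for every $i$, so that the single $\mathfrak{m}$-adic topology of $\widehat{D}$ already controls all $n$ valuations at once.  Because of this uniform control, the classical approximation theorem for pairwise independent valuations, while applicable in principle, is not needed; nor is Lemma~\ref{Abhyankar} invoked directly, though it is the conceptual reason one expects the values $v_i(z)$ to be attainable by elements of $D$ in the first place.
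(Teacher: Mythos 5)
Your proof is correct, and it takes a genuinely different and more elementary route than the paper's. The paper first invokes Lemma~\ref{Abhyankar} to transport the problem to the quotient field $F$ of $D$: for each $i$ it notes that the value group of the restriction $w_i = v_i|_F$ equals that of $v_i$, so some $d_i\in D$ hits the value $v_i(z)$, and it then appeals to the approximation theorem \cite[Theorem 2.4.1, p.~48]{EP} for the family $\{w_1,\dots,w_n\}$ to find a single $d\in D$ with $w_i(d)=w_i(d_i)$ for all $i$. You bypass both of these tools. The essential point you isolate is that, because each $V_i$ is a DVR dominating $\widehat{D}$, the single $\widehat{\mathfrak m}$-adic filtration controls all $n$ valuations at once: $v_i(\widehat{\mathfrak m}^N)\ge N$ for every $i$. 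Choosing $N>\max_i v_i(z)$ and using that $D\to\widehat{D}/\widehat{\mathfrak m}^N$ is surjective gives $d\in D$ with $v_i(z-d)\ge N>v_i(z)$, so the ultrametric inequality forces $v_i(d)=v_i(z)$. This is a cleaner self-contained argument that avoids the (nontrivial) Abhyankar--G\"ohner correspondence and the approximation theorem; the only thing the paper's route arguably ``buys'' is consistency with the machinery it is already deploying elsewhere in Section~5, but strictly for this lemma your approach is both shorter and less dependent on prior results. One small point worth making explicit in a write-up: the uniform bound $v_i(\widehat{\mathfrak m}^N)\ge N$ uses that each $v_i$ is normalized with value group $\mathbb{Z}$ (so that $v_i(\widehat{\mathfrak m})\ge 1$); if one did not normalize, one would instead take $c_i:=\min v_i(\widehat{\mathfrak m})>0$ (which exists since $V_i$ is discrete) and choose $N$ with $Nc_i>v_i(z)$ for all $i$, but the conclusion is unchanged.
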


\begin{proof} Let $z \in \widehat{D}$, and for each $i$, let $w_i$ be the restriction of $v_i$ to the quotient field of $D$.  Then
by Lemma~\ref{Abhyankar}, for each $i$, the value groups of $w_i$ and $v_i$ are the same, so there exists $d_{i} \in D$ such that $v_i(z) = v_i(d_{i})$. Therefore, by the approximation theorem applied to the collection of valuation rings $\{w_1,w_2,\ldots,w_n\}$, there exists $d \in D$ such that for each $i$, $w_i(d_i)= w_i(d)$ \cite[Theorem 2.4.1, p.~48]{EP}.  Hence for each $i$, $v_i(z) = v_i(d_i) = w_i(d_i) = w_i(d) = v_i(d)$.
\end{proof}

\begin{lem}  \label{irredundant}
Let $D$ be an analytically irreducible   local Noetherian domain of
Krull dimension $>1$, let $V_1,\ldots,V_n$ be hidden prime divisors
of $D$, and let $V_1',\ldots,V'_n$ be  extensions of these valuation
rings to valuation overrings of $\widehat{D}$.  Let $0 \ne f \in D$.
Then for each $i$, $V_i$ is irredundant in the intersection $V_1
\cap \cdots \cap V_n \cap D_f$ if and only if $V'_i$ is irredundant
in the intersection $V'_1 \cap \cdots \cap V'_n \cap \widehat{D}_f$.
\end{lem}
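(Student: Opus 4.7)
The plan is to characterize irredundancy via witness elements and then transfer such witnesses between $D$ and $\widehat{D}$ using the correspondence of Lemma~\ref{Abhyankar} together with the approximation result of Lemma~\ref{approximation}. Let $v_j$ and $v_j'$ denote the valuations corresponding to $V_j$ and $V_j'$. By Lemma~\ref{Abhyankar}, $v_j'$ restricts to $v_j$ on the quotient field $F$ of $D$, so $v_j$ and $v_j'$ agree on $D_f$. Irredundancy of $V_i$ in $V_1 \cap \cdots \cap V_n \cap D_f$ is equivalent to the existence of an element $d \in D_f$ with $v_j(d) \geq 0$ for all $j \neq i$ but $v_i(d) < 0$, and an analogous characterization holds for $V_i'$ in terms of elements of $\widehat{D}_f$ and the valuations $v_j'$.

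For the forward direction, any witness $d \in D_f$ for the irredundancy of $V_i$ is automatically a witness for the irredundancy of $V_i'$, because $D_f \subseteq \widehat{D}_f$ and $v_j'(d) = v_j(d)$ for every $j$.

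For the reverse direction, I would begin with a witness $z \in \widehat{D}_f$ satisfying $v_j'(z) \geq 0$ for $j \neq i$ and $v_i'(z) < 0$. Choose $k \geq 0$ with $u := f^k z \in \widehat{D}$ and apply Lemma~\ref{approximation} to $u$ and the finite collection $v_1',\ldots,v_n'$ to produce $d' \in D$ with $v_j'(d') = v_j'(u)$ for each $j$. Setting $d := d'/f^k \in D_f$ yields
\[
v_j'(d) \;=\; v_j'(d') - k\,v_j'(f) \;=\; v_j'(u) - k\,v_j'(f) \;=\; v_j'(z)
\]
for every $j$, so $d$ inherits the sign pattern of $z$ under each valuation and thus witnesses the irredundancy of $V_i$ in the intersection over $D$.

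The step I expect to require the most care is the denominator clearing: Lemma~\ref{approximation} produces approximants only for elements of $\widehat{D}$, whereas an arbitrary witness in the completion's intersection a priori lies only in $\widehat{D}_f$. Lifting to $f^k z \in \widehat{D}$ first and then dividing by $f^k$ at the end resolves this, and exploits that $f \in D$ so the correction term $k\,v_j'(f) = k\,v_j(f)$ is unaffected by the approximation.
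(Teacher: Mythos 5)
Your proposal is correct and follows essentially the same route as the paper: the forward direction is the observation that the valuations agree on $F$ (the paper phrases this as intersecting the redundancy containment with $F$, you phrase it via a witness element — logically the same), and the reverse direction clears denominators, invokes Lemma~\ref{approximation} to replace the numerator in $\widehat{D}$ by an element of $D$ with the same values under the $v'_j$, and reassembles the witness. The paper's "we may assume $d \in D$" is exactly the approximation step you spell out.
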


\begin{proof}
   Let $i \in \{1,2,\ldots,n\}$.
   % Suppose first that
    %$V_i$ is irredundant in the intersection  I
    If $(\bigcap_{j \ne i} V'_j) \cap \widehat{D}_f \subseteq V'_i$, then intersecting both sides of the containment with the quotient field $F$, we obtain since each $V'_j$ extends $V_j$ that  $(\bigcap_{j \ne i}V_j) \cap D_f \subseteq V_i$.  Conversely, suppose that $V'_i$ is irredundant in the intersection    $V'_1 \cap \cdots \cap V'_n \cap \widehat{D}_f$.
    Then there exists $d \in \widehat{D}$ and $s>0$ such that $d/f^s \in (\bigcap_{j \ne i}V'_j)  \setminus V'_i$.  Moreover, by Lemma~\ref{approximation} we may assume   that $d \in D$.
      %Now for each $i=1,2,\ldots,t$, let $V_i = V'_i \cap F$, where $F$ is the quotient field %of $D$.  Let  $H = V_1 \cap V_2 \cap \cdots \cap V_t \cap D_f$.
      Therefore, $d/f^s \in (\bigcap_{j \ne i}V_j)  \setminus V_i$, so that $V_i$ is irredundant in the representation $V_1 \cap V_2 \cap \cdots \cap V_n \cap D_f$.      \end{proof}

In passing to the completion $\widehat{D}$ of $D$, we wish to apply results from the previous sections, and so we need that not only $D$, but also $\widehat{D}$ is normal, i.e., that $D$ is {\it analytically normal}.  In the next section our main application  of the following theorem is to the case where $D$ is a regular local ring.  In this case, since $\widehat{D}$ is also a regular local ring, analytic normality of $D$ is immediate since regularity is preserved by completion.

\begin{thm} \label{pass to completion} Let $D$ be a two-dimensional  analytically normal
local Noetherian domain,  let $0 \ne f \in D$ such that  the radical of $f\widehat{D}$ is a prime ideal, and let $n$ be a
positive integer. Then the following statements are equivalent.

 \begin{itemize}

 \item[(1)]  Every $D$-subalgebra of $D_f$
 is  essentially $n$-valuated.

\item[(2)]  Every $\widehat{D}$-subalgebra of   $\widehat{D}_f$ is  essentially $n$-valuated.

 \item[(3)]  Every finitely generated $D$-subalgebra of $D_f$ is an essentially $n$-fibered extension of $D$.

 \item[(4)]  Every finitely generated $\widehat{D}$-subalgebra of $\widehat{D}_f$ is an essentially $n$-fibered extension of $\widehat{D}$.

\end{itemize}

\end{thm}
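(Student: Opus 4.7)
The plan is to first reduce to the single equivalence $(3) \Leftrightarrow (4)$. I would note that $\widehat{D}$ is itself a two-dimensional integrally closed local Noetherian domain by analytic normality, and that $\sqrt{fD} = \sqrt{f\widehat{D}} \cap D$ is prime by faithful flatness of $D \to \widehat{D}$, being the contraction of a prime. Hence Theorem~\ref{characterization} applies to both $D$ (with $f$) and $\widehat{D}$ (with $f$), giving $(1) \Leftrightarrow (3)$ and $(2) \Leftrightarrow (4)$, so it remains to prove $(3) \Leftrightarrow (4)$.

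\textbf{Matching exceptional prime divisors across completion.} The core of the argument uses Lemma~\ref{Abhyankar} to set up a bijection $V \leftrightarrow V'$ between hidden prime divisors of $D$ and of $\widehat{D}$, with $v' = v$ on $F$. For a finitely generated $D$-subalgebra $B = D[z_1/f^k,\ldots,z_s/f^k]$ of $D_f$, a hidden prime divisor $V$ of $D$ is an exceptional prime divisor of $\overline{B}/D$ precisely when $B \subseteq V$, equivalently $v(z_j) \geq k v(f)$ for all $j$; since $v' = v$ on $F$, this is equivalent to $B\widehat{D} \subseteq V'$. This yields a natural bijection $\xpd(\overline{B}/D) \leftrightarrow \xpd(\overline{B\widehat{D}}/\widehat{D})$. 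To handle a finitely generated $\widehat{D}$-subalgebra $\widehat{B} = \widehat{D}[\widehat{z_1}/f^k,\ldots,\widehat{z_s}/f^k]$ of $\widehat{D}_f$, I would apply the approximation Lemma~\ref{approximation} to the finitely many hidden prime divisors $W$ of $\widehat{D}$ that contain $\widehat{B}$, producing $z_j \in D$ with $w(z_j) = w(\widehat{z_j})$ for each such $w$, so that $B := D[z_1/f^k,\ldots,z_s/f^k]$ satisfies $B \subseteq V$ iff $\widehat{B} \subseteq V'$ for every matched pair, and therefore $\xpd(\overline{B}/D) \leftrightarrow \xpd(\overline{\widehat{B}}/\widehat{D})$.

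\textbf{Transferring the combinatorial $n$-fibered condition.} Proposition~\ref{representation2} combined with Lemma~\ref{representation} translates essentially $n$-fibered into the combinatorial statement that no $n+1$ exceptional prime ideals share a common maximal ideal of the integral closure. To pass $(3) \Rightarrow (4)$ and back, I would argue via the matching of Paragraph~2 that $n+1$ exceptional prime divisors share a common maximal ideal on the $D$-side if and only if they do so on the $\widehat{D}$-side. For $(4) \Rightarrow (3)$, given a finitely generated $B \subseteq D_f$ and setting $\widehat{B} = B\widehat{D}$, the inclusion $\overline{B} \hookrightarrow \overline{B\widehat{D}}$ lifts a maximal ideal of $\overline{B}$ containing $\mathfrak m$ to one of $\overline{B\widehat{D}}$ containing $\mathfrak m\widehat{D}$ (going-up combined with faithful flatness of the completion), so $n+1$ coincident exceptional primes in $\overline{B}$ would lift to the forbidden configuration in $\overline{B\widehat{D}}$. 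For $(3) \Rightarrow (4)$, I would use the approximating $B$ from Paragraph~2: a common maximal ideal of $n+1$ exceptional primes in $\overline{\widehat{B}}$ must be reflected, via contraction to $F$, in a common maximal ideal of $n+1$ matched exceptional primes of $\overline{B}$, contradicting~(3).

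\textbf{Main obstacle.} The hard part is this last direction $(3) \Rightarrow (4)$, where $\widehat{B}$ is not literally $B\widehat{D}$ but only matches it along the chosen finite set of hidden prime divisors. I would need to verify that this finite set is rich enough to determine the maximal-ideal structure of $\overline{\widehat{B}}$ above $\mathfrak m\widehat{D}$, so that the combinatorics of center containments on the $\widehat{D}$-side is genuinely forced by that on the $D$-side. Here the substantive tool is the interplay between Lemma~\ref{approximation} and an irredundancy argument in the spirit of Lemma~\ref{irredundant}: one uses the former to ensure that enough values are preserved in the approximation $\widehat{z_j} \rightsquigarrow z_j$, and the latter to exclude spurious maximal ideals of $\overline{\widehat{B}}$ without counterparts in $\overline{B}$.
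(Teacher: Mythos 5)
Your reduction to a single equivalence via Theorem~\ref{characterization} is correct and matches the paper, as is the observation (which the paper also verifies, via flatness) that $fD$ inherits prime radical from $f\widehat{D}$. After that, the plan diverges, and in both remaining directions there is a real gap centered on the same issue: the essentially $n$-fibered condition is a statement about which exceptional prime ideals share a common \emph{maximal ideal} of the normalization, and that maximal-ideal containment data is not determined by, nor does it transfer directly along, the Abhyankar bijection $V \leftrightarrow V'$ of exceptional prime divisors.

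For the direction from $\widehat{D}$ to $D$ (your $(4) \Rightarrow (3)$), the going-up argument does not produce a single maximal ideal of $\overline{B\widehat{D}}$ containing all $n+1$ lifted exceptional primes. Going-up applied to $P_i \subseteq M$ yields, for each $i$, \emph{some} maximal ideal $M'_i$ of $\overline{B\widehat{D}}$ over $M$ with $P'_i \subseteq M'_i$, but different $i$ may give different $M'_i$; there is no reason for a common $M'$ unless one already knows the fiber over $M$ is a singleton, which is precisely the kind of geometric fact that is in question. The paper avoids this by not reasoning about $\overline{B\widehat{D}}$ at all: instead, it introduces an \emph{extra} hidden prime divisor $W$ of $H$ centered on the offending maximal ideal $M$, passes $W$ and the $V_i$ across the completion via Lemma~\ref{Abhyankar}, applies statement $(2)$ to the lifted ring $H' = V'_1 \cap \cdots \cap V'_t \cap \widehat{D}_f$ to extract an explicit identity $1 = \sum_j h_j/f^s$ with $h_j/f^s \in P'_j$, and then uses Lemma~\ref{approximation} to replace the $h_j$ by elements $d_j \in D$ preserving the $w'$- and $v'_i$-values. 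The contradiction comes from $w'$: each $h_j/f^s$ has positive $w'$-value, so the sum cannot be $1$. The auxiliary valuation $W$, together with the concrete algebraic identity, is the key idea your proposal is missing.

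For the direction from $D$ to $\widehat{D}$ (your $(3) \Rightarrow (4)$), you correctly flag the obstacle: the approximating $B$ only matches $\widehat{B}$ along finitely many chosen valuations, and you would need to rule out spurious maximal ideals of $\overline{\widehat{B}}$ with no counterpart downstairs. The paper's corresponding direction ($(1) \Rightarrow (2)$) circumvents this entirely by not approximating the generators: it starts from $H'$ and passes directly to $H := H' \cap F = V_1 \cap \cdots \cap V_t \cap D_f$, then uses Lemma~\ref{irredundant} to see that irredundancy of the $V'_i$ in $H'$ is equivalent to irredundancy of the $V_i$ in $H$. With that in hand, the inclusion $P_i \subseteq P'_i$ (coming from $V_i = V'_i \cap F$ and $H \subseteq H'$) immediately shows that $P_1 + \cdots + P_{n+1} = H$ forces $P'_1 + \cdots + P'_{n+1} = H'$, which is exactly the combinatorial condition from Proposition~\ref{representation2}. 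This is considerably simpler and tighter than the approximation route, because it never leaves the controlled representation $V'_1 \cap \cdots \cap V'_t \cap \widehat{D}_f$.

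In short: the reduction step is right, but the transfer of the maximal-ideal combinatorics cannot be carried out by matching $\xpd$ sets alone; you need either the auxiliary-valuation-plus-identity trick (one direction) or the intersect-with-$F$-and-use-irredundance trick (the other direction), which are what the paper supplies.
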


\begin{proof}  Since $f\widehat{D}$ has prime radical, so does $fD$.  For suppose that ${\ff p}_1$ and ${\ff p}_2$ are height 1 prime ideals of $D$ containing $f$.
Then since $\widehat{D}$ is faithfully flat over $D$, there are prime ideals ${\ff P}_1$ and ${\ff P}_2$ of $\widehat{D}$ minimal with respect to the property of lying over ${\ff p}_1$ and ${\ff p}_2$, respectively \cite[Theorem 7.3, p.~48]{Mat}.
Moreover, since $\widehat{D}$ is flat over $D$, then  ht(${\ff P}_i) = $ ht$({\ff p}_i) + \dim \widehat{D}_{{\ff P}_i}/{\ff p}_i\widehat{D}_{{\ff P}_i} = 1$
\cite[Theorem 15.1, p.~116]{Mat}. Thus  since the radical of  ${f\widehat{D}}$ is a prime ideal, it follows that ${\ff P}_1 = {\ff P}_2$, and hence ${\ff p}_1 = {\ff p}_2$.  Therefore, $fD$ has prime radical,  and the equivalence of (1) and (3), as well as the equivalence of (2) and (4), is given by Theorem~\ref{characterization}.

(1) $\Rightarrow$ (2)
      Suppose that every ring between $D$ and $D_f$ is an essentially   $n$-valuated subring of $D_f$.  We use Theorem~\ref{characterization} to verify that (2) holds.  Specifically,
      to show that (2) holds it is enough to prove that every finitely generated $\widehat{D}$-subalgebra of $\widehat{D}_f$ is essentially $n$-valuated.  Let $H'$ be the integral closure of a   finitely generated $\widehat{D}$-subalgebra of $\widehat{D}_f$.  Then by Lemma~\ref{representation},
      there are hidden prime divisors $V'_1,V'_2,\ldots,V'_t$  of $\widehat{D}$ such that  $H' = V'_1 \cap V'_2 \cap \cdots \cap V'_t \cap \widehat{D}_f$.
        We may assume that no $V'_i$ can be omitted from this representation.  For each $i$, let $V_i = V'_i \cap F$, where $F$ is the quotient field of $D$.
    Then by Lemma~\ref{irredundant} no $V_i$ can be omitted from the representation of $H := H' \cap F$ given by
      $H = V_1 \cap V_2 \cap \cdots \cap V_t \cap D_f$.  Thus by \cite[Lemma 1.3]{HO}, the fact that each $V_i$ is a DVR implies that for each $i$, $V_i = H_{P_i}$ for some height $1$ prime ideal $P_i$ of $H$.
        For each $i$, let $P'_i = {\ff M}_{V'_i} \cap H'$, where ${\ff M}_{V'_i}$ is the maximal ideal of $V'_i$.  Then, again by \cite[Lemma 1.3]{HO}, the fact that $V'_i$ is a DVR implies that $V'_i = H'_{P'_i}$, and in proving (2), without loss of generality it suffices by Proposition~\ref{representation2} to assume that $t>n$ and show that $P'_1 + P'_2 + \cdots P'_{n+1} = H'$.  In fact, since for each $i$, $P_i \subseteq P'_i$, it suffices to show that $P_1 + P_2 + \cdots + P_{n+1} = H$.  But this is the case by (1) and Proposition~\ref{representation2}, so
        the claim is proved.

(2) $\Rightarrow$ (3) Suppose that every  ring
between $\widehat{D}$ and $\widehat{D}_f$ is an essentially $n$-valuated
subring of $\widehat{D}_f$.
%We claim that every
%ring between $D$ and ${D}_f$ is an essentially $n$-valuated subring of
%$D_f$.
To prove (3), it suffices by
Theorem~\ref{characterization} to show that every finitely generated $D$-subalgebra of $D_f$ is an essentially
$n$-valuated subring of $D_f$. 
 Let $H$  be the integral closure of
a finitely generated $D$-subalgebra of $D_f$ in its quotient field.
If $H$ has dimension one, then by Lemma~\ref{very new lemma}, $H= D_f$, and hence $H$ is trivially an essentially $n$-valuated subring of $D_f$. So suppose that $H$ has dimension $2$.
% $D[\frac{g_1}{f^e},
%\ldots,\frac{g_k}{f^e}]$ in its quotient field.
Let $P_1,\ldots,P_t$ be the height $1$ prime ideals of the
Noetherian domain $H$ that contain ${\ff m}$, and for each
$i=1,\ldots,t$, let $V_i = H_{P_i}$.   Then by
Lemma~\ref{representation}, $H = V_1 \cap \cdots \cap V_t \cap
D_f$, and to prove that $H$ is an essentially $n$-valuated subring of
$D_f$,  we may assume that $t>n$ (or else there is nothing to show)
and by this same lemma
 we need only
verify that  for distinct elements $i_1,i_2,\ldots,i_{n+1}$ of
$\{1,2,\ldots,n\}$, it is the case that $P_{i_1} + P_{i_2}+\cdots + P_{i_{n+1}} = H$.
Without loss of generality it suffices to prove that $P_1 + P_2 +
\cdots + P_{n+1} = H$.  To this end, suppose by way of contradiction
that $P_1 + P_2 + \cdots + P_{n+1}$  is contained in a maximal ideal
$M$ of $H$. Since $H$ is a Noetherian domain, there exists a hidden
prime divisor $W$ of $H$ such that ${\ff M}_W \cap H = M$.  (For
example, consider the integral closure $E$ of the Noetherian ring
$H[M/h]$, where $0 \ne h \in M$, and choose a height $1$ prime ideal
$Q$ of $E$ containing $hE = ME$. Then $W:=E_{Q}$ is a hidden prime
divisor of $H$.)

Let $F$ denote the quotient field of $D$.  By Lemma~\ref{Abhyankar}
there exist unique hidden prime divisors $W',V'_1,V'_2,\ldots,V'_t$
of $\widehat{D}$ such that $W = W' \cap F$ and for each $i$, $V_i =
V'_i \cap F$. Moreover, the lemma shows the value groups of $W$ and
$W'$ are the same, as are the value groups of $V_i$ and $V'_i$, for
each $i$.  Let  $H' = V'_1 \cap \cdots \cap V'_t \cap
\widehat{D}_f$. For each $i$, let $P'_i = {\ff M}_{V'_i} \cap H'$.
   Then by the assumption that $H'$ is an essentially $n$-valuated subring
    of $\widehat{D}_f$ and  Proposition~\ref{representation2},
     $P'_1 + P'_2 + \cdots + P'_{n+1} = H'$, so there exist
     $h_1,h_2,\ldots,h_{n+1} \in \widehat{D}$ and $s\geq e$ such
      that
for each $i$,  $h_i/f^s \in P'_i$, and  $1 = \frac{h_1}{f^s} +
\frac{h_2}{f^s}+ \cdots + \frac{h_{n+1}}{f^s}$.

    Next we note that when considering the valuation rings $W',V'_1,\ldots,V'_t$, we may replace $h_1,h_2,\ldots,h_{n+1}$  with elements of $D$ that behave the same way with respect to the corresponding valuations.
This is done by applying Lemma~\ref{approximation} to the collection
of hidden prime divisors $\{w',v'_1,\ldots,v'_t\}$.  Namely, for
each
     $j =1,2,\ldots,n+1$, the lemma shows that there exists $d_j \in D$ such that $w'(d_j) = w'(h_j)$ and $v'_i(d_j) = v'_i(h_j)$ for all
      $i=1,2,\ldots,t$.

      Now for each $j =1,2,\ldots,n+1$, we have
      $0< v'_j(h_j/f^s) = v_j(d_j/f^s)$, and for each
       $i \in \{1,2,\ldots,t\} \setminus \{j\}$,
       $0 \leq v'_i(h_j/f^s) = v_i(d_j/f^s)$.
      Therefore, $d_1/f^s,d_2/f^s,\ldots,$ $d_{n+1}/f^s \in V_1 \cap
      \cdots \cap V_t \cap D_f = H$, and for each $j=1,2,\ldots,n+1$, we have
      $d_j/f^s \in {\ff M}_{V_j} \cap H = P_j$.  Now
      $P_1 + P_2 + \cdots + P_{n+1} \subseteq M \subseteq {\ff M}_{W'}$,
       so for each $j=1,2,\ldots,n+1$,
        $0 < w'(d_j/f^s) = w'(h_j/f^s)$.
         But $1 = \frac{h_1}{f^s} + \frac{h_2}{f^s} + \cdots
          + \frac{h_{n+1}}{f^s}$, so that at least one of $h_j/f^s$
          is a unit in $W$, a contradiction.  Therefore
           $P_1+P_2 + \cdots+P_{n+1} = H$, and the proof is complete.
\end{proof}

\section{Two-dimensional regular local rings}

\label{rlr section}

Until to this point, we have exhibited no examples of two-dimensional integrally closed local domains $D$ having a nonunit $f$ such that for some $n>0$, every ring between $D$ and $D_f$ is essentially $n$-valuated.  We remedy this in the present section by restricting to the case where $D$ is a regular local ring.
We apply the results of the previous sections in the proof of  Theorem~\ref{second theorem} to show that for the regular local rings we consider, we may as well assume that $D$ is a localization of $V[X]$, where $V$ is a DVR.  This case is addressed in the next lemma, but the full proof of the lemma is lengthy and requires different techniques than those employed so far.  So we postpone the crucial part of the proof of the lemma to the next section and treat it separately.  Specifically, the lemma depends on Theorem~\ref{localization}, the proof of which is the goal of  Section 7.

\begin{lem} \label{MacLane} Let $V$ be a DVR with quotient field $F$, and let $X$ be an indeterminate for
$F$. If $S$ is a multiplicatively closed subset of $V[X]$, then
every ring between $V[X]_S$ and $F[X]_S$ is
essentially one-valuated.
\end{lem}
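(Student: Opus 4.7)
The plan is to reduce the lemma to Theorem~\ref{localization} of Section~7, whose proof employs MacLane's key polynomial construction of the hidden prime divisors of $V[X]$. Let $\pi$ be a uniformizer of $V$, so that $F[X]_S = (V[X]_S)_\pi$. If $S$ meets $\pi V[X]$, then $V[X]_S = F[X]_S$ and the claim is vacuous, so assume $S \cap \pi V[X] = \emptyset$. Under this assumption, $\pi V[X]_S$ is a prime ideal, since $V[X]/\pi V[X] \cong (V/\pi V)[X]$ is a domain.

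By Proposition~\ref{n case}, it suffices to prove that every finitely generated $V[X]_S$-subalgebra $B$ of $F[X]_S$ is essentially $1$-valuated in $F[X]_S$. Since $V$ is Nagata, so is $V[X]_S$, and the integral closure $\overline{B}$ is a module-finite, hence Noetherian, extension of $B$. The essentially $1$-valuated property is local on the maximal ideals of $\overline{B}$, so fix $M \in \Max(\overline{B})$ and set $N := M \cap V[X]_S$. If $N$ has height at most $1$, then $(V[X]_S)_N$ is a DVR (or a field); since the only integrally closed Noetherian overrings of a DVR are itself and its quotient field, $\overline{B}_M$ is itself a valuation ring, and the required representation at $M$ holds trivially.

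It remains to handle the case when $N$ has height $2$. Then $D_N := (V[X]_S)_N$ is a two-dimensional regular local ring with $\sqrt{\pi D_N} = \pi D_N$ a prime ideal. Let $R'$ be the finitely generated $D_N$-subalgebra of $(D_N)_\pi$ obtained by base-changing $B$ along $V[X]_S \to D_N$. A direct computation shows that $\overline{R'}$ is the localization of $\overline{B}$ at $V[X]_S \setminus N$, that $M$ survives as a maximal ideal $M'$ of $\overline{R'}$ with $\overline{R'}_{M'} = \overline{B}_M$, and that the associated localization of $(D_N)_\pi$ coincides with $(F[X]_S)_M$. Applying Theorem~\ref{characterization} to $D_N$ and $\pi$, the essentially $1$-valuated condition for $\overline{B}_M$ inside $(F[X]_S)_M$ follows from the essentially $1$-fibered condition for $R'/D_N$: namely, that at most one height $1$ prime of $\overline{R'}$ lying over the maximal ideal of $D_N$ is contained in any single maximal ideal. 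This is precisely the content of Theorem~\ref{localization}.

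The hard part is therefore Theorem~\ref{localization} itself. MacLane's construction parametrizes the valuations of $F(X)$ extending that of $V$ by sequences of key polynomials, and the technical work of Section~7 consists in showing that two distinct hidden prime divisors arising from different key polynomial sequences cannot simultaneously dominate a common maximal ideal of the integral closure of a finitely generated $V[X]_S$-subalgebra of $F[X]_S$. Carrying out this explicit comparison via the key polynomial apparatus is the heart of the argument.
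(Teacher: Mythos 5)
Your proposal correctly identifies Theorem~\ref{localization} as the engine, and the opening reduction via Proposition~\ref{n case} is legitimate, but the final step breaks down. You reduce to showing that $R'/D_N$ is essentially $1$-fibered, where $D_N=(V[X]_S)_N$ is a localization of $V[X]$ at a height-two prime and $R'$ is a finitely generated $D_N$-subalgebra of $(D_N)_\pi$, and you then assert that this ``is precisely the content of Theorem~\ref{localization}.'' It is not: Theorem~\ref{localization} concerns rings between $V[X]$ and $F[X]$, whereas you now need the analogous statement over the local ring $D_N$ sitting inside $(D_N)_\pi$. Bridging that gap requires transporting the representation from a subring of $F[X]$ to a subring of its localization, and that transport is exactly what Lemma~\ref{MacLane} itself is supposed to furnish. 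As written the argument is therefore circular (or, at best, leaves the essential step unproved). There is also a spurious hypothesis: a general DVR $V$ need not be a Nagata ring, so ``since $V$ is Nagata, so is $V[X]_S$'' is unjustified; what you actually need --- that $\overline{B}$ is Noetherian --- follows instead from the Mori--Nagata theorem together with the observation (2.8) of the paper that Krull overrings of a two-dimensional Noetherian domain are Noetherian, without any finiteness of the normalization.

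The paper's proof avoids all of this by running the localization in the opposite direction and on a single ring. Given an integrally closed $H$ with $V[X]_S \subseteq H \subseteq F[X]_S$ and a prime $P$, it sets $B := H \cap F[X]$, which is an integrally closed ring between $V[X]$ and $F[X]$, so Theorem~\ref{localization} applies directly to $B$ at $Q := P\cap B$ and produces $B_Q = W \cap F[X]_Q$. Since $S$ consists of units of $H$, one has $B_S = H_S \cap F[X]_S = H$, and localizing the displayed equality first at $S$ and then at $P$ gives $H_P = (W_S)_P \cap (F[X]_S)_P$. This is shorter and, more importantly, never needs a local version of Theorem~\ref{localization} or any maximal-ideal case analysis on the base: the only ring to which the theorem is applied is $B$, which is already in the form the theorem covers. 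If you want to keep your structure, the missing lemma you would have to supply is precisely the claim that Theorem~\ref{localization} implies the essentially $1$-fibered property for $R'/D_N$, and proving that cleanly amounts to reinventing the paper's intersection-and-localize argument.
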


\begin{proof} Let $H$ be an integrally closed domain with $V[X]_S
\subseteq H \subseteq F[X]_S$, and let $P$ be a prime ideal of $H$.
Define $B = H \cap F[X]$.  Then by Theorem~\ref{localization}, $B$ is an essentially one-valuated subring of $F[X]$.  Thus for $Q:=P \cap B$, there exists a
valuation overring $W$ of $B$ such that $B_Q = W \cap F[X]_Q$.
Localizing with respect to $S$, we have $(B_S)_Q = W_S \cap
(F[X]_S)_Q$.  Now $B_S = H_S \cap F[X]_S = H$, so $H_Q = W_S \cap
(F[X]_S)_Q$.  Therefore, since $H_Q \subseteq H_P$, localizing $H_Q = W_S \cap
(F[X]_S)_Q$
with respect to $P$ yields $H_P = (W_S)_P
\cap (F[X]_S)_P$, which since $(W_S)_P$ is a valuation domain proves that $H$ is an essentially one-valuated subring of
$F[X]_S$.
\end{proof}

We use the lemma to prove now  Theorem~\ref{second theorem}.
\smallskip

{\it Proof of Theorem~\ref{second theorem}.}
 Let $D$ be a regular local ring of Krull dimension $2$,
and let $f$ be a regular parameter of $D$. Then there exists $g \in D$ such that $(f,g)$ is the maximal ideal of $D$.  
Suppose  either  $D$ is equicharacteristic  or  $D$ has mixed characteristic and $f$ is a prime integer in $D$.
We prove that every ring $H$ between $D$ and $D_f$ is  an essentially one-valuated
subring of $D_f$.
First we claim that  there exists a DVR  $V \subseteq \widehat{D}$ with maximal ideal $fV$ such that $g$ is transcendental over the quotient field  of $V$ and  $V[g]_{(f,g)}$ has   completion $\widehat{D}$ in the $(f,g)$-adic topology.
If $D$ and
its residue field have the same characteristic, then since
$\widehat{D}$ is a regular local ring, the Cohen Structure Theorem
 shows that $\widehat{D} = K[[f,g]]$
for some subfield $K$ of $\widehat{D}$, and $f$ and $g$ are
analytically independent over $K$; cf.~\cite[Theorems 9 and
15]{Cohen}.  Thus with $V = K[f]_{(f)}$, then  $V[g]_{(f,g)}$ has completion $\widehat{D}$.
%Let $A=V[g]_{(f,g)}$.  Then $A_f =
On the other hand, suppose $D$ has
characteristic $0$ but its residue field has characteristic $p \ne
0$ and $f=p$. In this case, applying \cite[Theorems 9 and 15]{Cohen}, we have that $\widehat{D} = V[[g]]$, where $V$ is a
complete DVR with maximal ideal $fV =pV$ and the mapping $V[[X]] \rightarrow
V[[g]]:X \mapsto g$ is a ring isomorphism.  Then $V[g]_{(f,g)}$ has completion $\widehat{D}$.

We have established that
there exists a  DVR  $V \subseteq \widehat{D}$ with maximal ideal $fV$ such that $g$ is transcendental over the quotient field  of $V$ and  $A:=V[g]_{(f,g)}$ has   completion $\widehat{D}$ in the $(f,g)$-adic topology.
  Then $A_f =
F[g]_S$, where $S = V[g] \setminus {(f,g)}$,
 and since $g$ is transcendental  over $F$, we may apply Lemma~\ref{MacLane} to obtain
that every ring between $A$ and $A_f$ is an
essentially one-valuated subring of $A_f$. By Theorem~\ref{pass to completion},
every  ring between $\widehat{D}$ and
$\widehat{D}_f$ is an essentially one-valuated subring of $\widehat{D}_f$, and another application of this same theorem shows then that every
ring  between $D$ and $D_f$ is an essentially one-valuated
subring of $D_f$. This proves Theorem~\ref{second theorem}. $\:\:\:\:\:\square$
%Next, we adapt our argument to  case (b), where $D$ has
%characteristic $0$ but its residue field has characteristic $p \ne
%0$ and $f=p$. In this case, applying Theorems 9 and 15 of
%\cite{Cohen}, we have that $\widehat{D} = V[[g]]$, where $V$ is a
%DVR with maximal ideal $fV =pV$ and the mapping $V[[X]] \rightarrow
%V[[g]]:X \mapsto g$ is a ring isomorphism.  Thus setting $R =
%V[g]_{(f,g)}$, we may argue exactly as above to show that every
% ring between $D$ and $D_f$ is an essentially one-valuated
%subring of $D_f$, and hence by Theorem~\ref{characterization}, every finitely generated $D$-subalgebra of $D_f$ is essentially one-fibered.
\smallskip

If $D$ is as in Theorem~\ref{second theorem},   and $(f,g)D$ is the maximal ideal of $D$, then the theorem implies that for $m,n>0$, the rings $D_1 = \overline{D[f^m/g^n]}$ and $D_2=\overline{D[g^n/f^m]}$ are essentially one-fibered.
%If the ideal $(f^m,g^n)D$ has at least two Rees valuations, then  each extension $D_1/D$ and $D_2/D$ has at least two exceptional prime ideals $P_1$ and $P_2$, and $Q_1$ and $Q_2$, respectively, such that $(D_{1})_{P_1} = (D_2)_{Q_1}$ and $(D_1)_{P_2} = (D_2)_{Q_2}$.    Since $D_1$ and $D_2$ are essentially one-fibered over $D$, then $P_1$ and $P_2$ are comaximal, as are $Q_1$ and $Q_2$.  The pairs of points $P_1, Q_1$ and $P_2, Q_2$ are identified in the projective model Proj$(D[(f^m,g^n)t]) = \Spec(D_1) \cup \Spec(D_2)$.  The closures of these identified points do not intersect in $\Spec(D_1)$ or $\Spec(D_2)$, yet the fiber of the closed point of $\Spec(D)$ in this model is connected, a contradiction.  Hence $(f^m,g^n)D$ is one-fibered.
Using an argument due to Heinzer and Lantz, it is possible to prove something stronger.
In \cite{Go}, G\"ohner proves that if $D$ is a complete normal local Noetherian domain with torsion divisor class group, then for every hidden prime divisor $D$, there is an ${\ff m}$-primary ideal for which $V$ is the unique Rees valuation ring.  (The latter condition is known as property (N) in the literature.)  In \cite{HL}, Heinzer and Lantz give a different proof of this theorem, and in the course of the proof  they give an argument from which can be deduced the following proposition.  Since this proposition is not stated explicitly in \cite{HL}, we reproduce the relevant part of the proof.

\begin{prop} \label{Heinzer-Lantz} {\em (Heinzer--Lantz \cite[proof of Theorem 11]{HL})} Let $D$ be a complete local Noetherian domain of Krull dimension $2$ with maximal ideal ${\ff m}$, and suppose that $0 \ne f \in D$ has prime radical.  Let $H$ be the integral closure of $D[f/g]$, where $(f,g)D$ is an ${\ff m}$-primary ideal of $D$.   Then there exists a maximal ideal $M$ of $H$ such that every exceptional prime ideal of the extension $H/D$ is contained in $M$.
%Moreover, if $n$ is the number of these height $1$ prime ideals, then $H$ is a subring of $D_g$ that is not locally $(n-1)$-valuated.
\end{prop}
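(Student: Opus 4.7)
The plan is to show that $H$ has a unique maximal ideal above a well-chosen maximal ideal $\mathfrak{n}$ of $A := D[f/g]$; once this is established, going-up guarantees that this unique maximal ideal contains every exceptional prime $P_i$ of $H/D$.

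To set up, the relation $f = g \cdot (f/g)$ in $A$ gives $\mathfrak{m}A = gA$ (a principal ideal of height one), and the natural surjection $(D/\mathfrak{m})[T] \twoheadrightarrow A/\mathfrak{m}A$ sending $T$ to the image of $f/g$ is then an isomorphism by a dimension count. Hence $\mathfrak{p} := \mathfrak{m}A$ is a prime of $A$, and the exceptional prime ideals $P_1, \ldots, P_t$ of $H/D$ are exactly the primes of $H$ lying over $\mathfrak{p}$; since $D$ is complete hence excellent, $H$ is finite over $A$, so there are only finitely many such $P_i$. I would select the maximal ideal $\mathfrak{n} := \mathfrak{p} + (f/g)A$ of $A$, corresponding to $T = 0$ in $\Spec(A/\mathfrak{m}A) = \mathbb{A}^1_{D/\mathfrak{m}}$.

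To establish uniqueness of the maximal ideal of $H$ above $\mathfrak{n}$, pass to $\mathfrak{n}$-adic completions. The completion $\widehat{A_\mathfrak{n}}$ is isomorphic to $D[[T]]/(gT - f)$, and $\widehat{H_\mathfrak{n}} = H \otimes_A \widehat{A_\mathfrak{n}}$ decomposes as a product $\prod_j \widehat{H_{M_j}}$ indexed by the maximal ideals $M_j$ of $H$ above $\mathfrak{n}$. A single factor corresponds to a unique such $M_j$, and this follows once we know $\widehat{A_\mathfrak{n}}$ is a domain (\emph{analytically irreducible}): the integral closure of a complete local Noetherian domain is itself local, and by excellence of $A$ this integral closure coincides with $\widehat{H_\mathfrak{n}}$.

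The main obstacle is to prove that $D[[T]]/(gT - f)$ is a domain, which is precisely where the hypothesis $\sqrt{fD}$ prime is required. One approach is to work with the height-one prime $\mathfrak{q} := \sqrt{fD}$ of $D$: since $g \notin \mathfrak{q}$ (forced by $(f,g)$ being $\mathfrak{m}$-primary while $\mathfrak{q}$ has height one), the equation $gT - f = g(T - f/g)$ holds in the localization at $\mathfrak{q}$, and $T - f/g$ behaves as a degree-one distinguished polynomial in the completion $\widehat{D_\mathfrak{q}}[[T]]$. A Weierstrass-style analysis there, combined with a descent argument that uses the primeness of $\mathfrak{q}$ to transfer the resulting irreducibility back to $D[[T]]$, should force $(gT - f)D[[T]]$ to be prime and complete the proof.
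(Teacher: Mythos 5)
Your proposal follows essentially the same route as the paper's proof: select the maximal ideal $\mathfrak{n} = ({\ff m}, f/g)$ of $A = D[f/g]$, reduce the existence of a unique maximal ideal of $H$ lying over $\mathfrak{n}$ to showing that the completion $\widehat{A_{\mathfrak{n}}} \cong D[[T]]/(gT - f)$ is a domain (using that the normalization of a complete local Noetherian domain is local, together with compatibility of normalization and completion for excellent rings), and then conclude via Going Up. The choice of maximal ideal, the reduction to analytic irreducibility, and the concluding Going Up step all match the paper's argument.

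Where your proposal has a genuine gap is exactly the step you flag as the main obstacle: proving that $gT - f$ is prime in $D[[T]]$. You correctly localize at $\mathfrak{q} := \sqrt{fD}$, note that $g$ is a unit in $D_{\mathfrak{q}}$, and observe that $T - f/g$ is a degree-one distinguished polynomial in $\widehat{D_{\mathfrak{q}}}[[T]]$, so $(gT - f)$ generates a prime ideal of that ring. But the ``descent argument that uses the primeness of $\mathfrak{q}$'' is left entirely unspecified, and this descent is the whole content of the step: the contraction of the prime $(gT - f)\widehat{D_{\mathfrak{q}}}[[T]]$ to $D[[T]]$ is of course prime, but it is not a priori equal to $(gT - f)D[[T]]$. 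The paper closes this gap with a short global argument: since $D$ is a Krull domain, $D[[T]] = \bigcap_{\mathfrak{p}} D_{\mathfrak{p}}[[T]]$, the intersection over all height-one primes $\mathfrak{p}$ of $D$. One then checks that $gT - f$ is a unit in $D_{\mathfrak{p}}[[T]]$ whenever $f \notin \mathfrak{p}$ (as $f$ is then a unit of $D_{\mathfrak{p}}$), and is a prime element of $D_{\mathfrak{q}}[[T]]$ for the one remaining prime $\mathfrak{q}$ (as $g$ is a unit there, so $gT - f$ is an associate of the linear polynomial $T - f/g$, whose quotient ring is $D_{\mathfrak{q}}$). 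Hence if $ab \in (gT - f)D[[T]]$ with, say, $a \in (gT - f)D_{\mathfrak{q}}[[T]]$, then $a/(gT - f)$ lies in every $D_{\mathfrak{p}}[[T]]$ and therefore in $D[[T]]$, giving primeness. Substituting this intersection argument for your local Weierstrass sketch makes the proof complete and coincident with the one in the paper.
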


\begin{proof}   Let $M_0$ be the maximal ideal of $D[f/g]$ generated by ${\ff m}$ and $f/g$.  We show first there is a unique maximal ideal $M$ of $H$ lying over $M_0$.  It is enough to show that the completion of $D[f/g]_{M_0}$ has local integral closure
\cite[Lemma 23.2.7.1, p.~219]{EGA}, and to prove this it suffices to show that this completion is a
domain \cite[(30.3) and (43.12)]{Nagata}.  Now $D[f/g] \cong D[X]/(gX-f)$, so since $D$ is complete, the completion of $D[f/g]_{M_0}$ is $D[[X]]/(gX-f).$ Thus we claim that $gX-f$ is a prime element of $D[[X]]$.  Since $D[[X]]$ is the intersection of the rings $D_{\ff p}[[X]]$, where ${\ff p}$ ranges over the height $1$ prime ideals of $D$, it is enough to verify that there is exactly one height $1$ prime ideal ${\ff p}$ of $D$  such that $gX-f$ is not a unit in $D_{\ff p}[[X]]$ and for this choice of ${\ff p}$, $(gX-f)D_{\ff p}[[X]]$ is a prime ideal of $D_{\ff p}[[X]]$.  Suppose that ${\ff p}$ is the unique prime ideal of $D$  that contains $f$.  Then since $(g,f)D$ is ${\ff m}$-primary, it must be that $g \not \in {\ff p}$.  Hence $g$ is a unit in $D_{\ff p}$, so that $gX-f$ is a prime element of $D_{\ff p}[[X]]$.  Otherwise, if ${\ff p}$ is a height $1$ prime ideal of $D$ that does not contain $f$, then $f$ is a unit in $D_{\ff p}$, so that  $gX-f$ is a unit in $D_{\ff p}[[X]]$.  Therefore, $gX-f$ is a prime element of $D[[X]]$, and there exists a unique maximal ideal $M$ of $H$ lying over $M_0$.  Now let $P$ be a height $1$ prime ideal of $H$ containing ${\ff m}$.  Then since ${\ff m}D[f/g]$ is a prime ideal of $D[f/g]$, and $H$ is integral over $D[f/g]$, it must be that $P \cap D[f/g] = {\ff m}D[f/g] \subseteq M_0 \subseteq M$,  By Going Up, there exists a maximal ideal  of $H$ lying over $M_0$ and containing $P$.  But $M$ is the unique maximal  ideal of $H$ lying over $M_0$, so $P \subseteq M$, and this proves the proposition.
\end{proof}

%From the proposition we deduce Corollary~\ref{complete cor} that was stated in the introduction. 

%\medskip

%{\it Proof of Corollary~\ref{complete cor}.} Let $\pi:X \rightarrow \Spec(D)$ be a modification, with $X$ normal and $D$ complete. By Chow's Lemma, there exist a scheme $Y$ and a morphism $\phi:Y \rightarrow X$ such that the composition $\pi\circ \phi:Y \rightarrow X \rightarrow \Spec(D)$ is a projective birational morphism \cite[Exercise 4.10, p.~107]{Hart}. In particular, $Y$ is the blow-up of $\Spec(D)$ along an ideal $I$ of $D$, and since $D$ has dimension $2$, $I$ can be 

%$\:\:\:\:\: \square$

\medskip

As another corollary of the proposition, we give a class of one-fibered ideals of regular local rings. 

\begin{cor} \label{one-fibered corollary} Let $D$ and $f$ be as in Theorem~\ref{second theorem}.   If $g \in {\ff m}$ such that $g$  is prime in $\widehat{D}$ and $f \not \in gD$,  then for each $m,n>0$, $(f^m,g^n)$ is a one-fibered ideal of $D$.
\end{cor}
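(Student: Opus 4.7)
The plan is to show that $H := \overline{D[g^n/f^m]}$ has a unique exceptional prime divisor over $D$; since $\dim D = 2$ equals the number of generators of the ideal $(f^m, g^n)$, the discussion in (2.5) will then force $(f^m, g^n)$ to be one-fibered. Two opposing constraints will together pin $|\xpd(H/D)|$ down to exactly one. On one side, $H \subseteq D_f$, so Theorem~\ref{second theorem} combined with Theorem~\ref{characterization} guarantees that $H$ is essentially $1$-fibered over $D$: every maximal ideal of $H$ contains at most one exceptional prime of $H/D$. On the other side, a passage to the completion together with Proposition~\ref{Heinzer-Lantz} will force all exceptional primes of $H/D$ to be contained in a single maximal ideal of $H$.

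For the completion step, I first observe that $\widehat{D}$ is a two-dimensional complete regular local ring in which $f$ remains a regular parameter (hence prime) and $g$ is prime by hypothesis; since $\widehat{D}$ is faithfully flat over $D$ and $f \notin gD$, we have $f \notin g\widehat{D}$, so $(f^m, g^n)\widehat{D}$ is $\widehat{\mathfrak{m}}$-primary. Set $\widehat{H} := \overline{\widehat{D}[g^n/f^m]}$ and apply Proposition~\ref{Heinzer-Lantz} to $\widehat{D}$, playing $g^n$ (which has prime radical $g\widehat{D}$) in the role of the proposition's ``$f$'' and $f^m$ in the role of its ``$g$''. This yields a maximal ideal $\widehat{M}$ of $\widehat{H}$ that contains every exceptional prime of $\widehat{H}/\widehat{D}$.

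I then propose to transfer this containment back to $H$ using Lemma~\ref{Abhyankar}. The step I expect to be the main obstacle is checking that the Abhyankar--G\"ohner correspondence $V \mapsto V'$ restricts to a bijection $\xpd(H/D) \leftrightarrow \xpd(\widehat{H}/\widehat{D})$ such that the associated primes $P = \mathfrak{M}_V \cap H$ and $P' = \mathfrak{M}_{V'} \cap \widehat{H}$ satisfy $P = P' \cap H$. For the forward direction, if $V = H_P \in \xpd(H/D)$, then $g^n/f^m \in V \subseteq V'$, so $V' \supseteq \widehat{D}[g^n/f^m]$, and by integral closedness $V' \supseteq \widehat{H}$, giving $V' = \widehat{H}_{P'}$. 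Conversely, if $V' \in \xpd(\widehat{H}/\widehat{D})$, then $V := V' \cap F$ is a DVR overring of $D[g^n/f^m]$ and, being integrally closed, contains $H$; therefore $V = H_P$ for some height-$1$ prime $P$ of $H$. Since $H \subseteq \widehat{H}$, one verifies that $P = \mathfrak{M}_V \cap H = \mathfrak{M}_{V'} \cap H = P' \cap H$.

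With the bijection in place, I would conclude as follows. Suppose for contradiction that $\xpd(H/D)$ contained distinct elements $V_1 = H_{P_1}$ and $V_2 = H_{P_2}$. Their $\widehat{D}$-counterparts $P'_1, P'_2$ both lie in $\widehat{M}$, so $P_1, P_2 \subseteq \widehat{M} \cap H =: M$. As a prime of $H$ containing two distinct height-$1$ primes in the two-dimensional Krull domain $H$, the ideal $M$ has height $2$ and is therefore maximal. But then $M$ is a maximal ideal of $H$ containing two distinct exceptional primes, contradicting the essentially $1$-fibered property supplied in the first paragraph. Since $\xpd(H/D) \ne \emptyset$ (the element $f$ is a nonunit in $H$ and therefore lies in at least one height-$1$ prime of $H$, which necessarily sits over $\mathfrak{m}$), we conclude $|\xpd(H/D)| = 1$, and by (2.5) the ideal $(f^m, g^n)$ is one-fibered.
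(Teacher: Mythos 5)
Your proof is correct and takes a genuinely different route from the paper's. The paper dispatches the passage to the completion in one step: it cites \cite[Theorem 5.3]{KV} (the number of Rees valuation rings of an ${\ff m}$-primary ideal $I$ of $D$ equals that of $I\widehat D$) to reduce entirely to the complete case, and then runs Proposition~\ref{Heinzer-Lantz} and Theorem~\ref{second theorem} both inside $\widehat D$. You instead apply Theorem~\ref{second theorem} to $D$ itself, apply Proposition~\ref{Heinzer-Lantz} only to $\widehat D$, and then carry the ``all in one maximal ideal'' conclusion back down to $H$ via the Abhyankar--G\"ohner correspondence (Lemma~\ref{Abhyankar}). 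The paper's route is shorter because the one external citation does all the transfer work; yours avoids \cite{KV} altogether and instead leans on machinery already developed in Section~5, at the cost of having to verify that the hidden-prime-divisor bijection restricts to $\xpd(H/D)\leftrightarrow\xpd(\widehat H/\widehat D)$ with compatible centers. That verification is the real content of your argument and you carry it out correctly; note that the step $V'=\widehat H_{P'}$ quietly uses \cite[Lemma 1.3]{HO} to identify a DVR overring of the Krull domain $\widehat H$ with a localization at a height-one prime, which is exactly the tool the paper uses in the analogous spots in the proof of Theorem~\ref{pass to completion}.

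One small gap to flag: in the final paragraph you assert that $f$ is a nonunit in $H$, hence $\xpd(H/D)\ne\emptyset$, without justification. This is true but not immediate: if $1/f\in H$ then $H=D_f$ has Krull dimension $1$, whereas $D[g^n/f^m]$ has dimension $2$ (its kernel in $D[T]$ under $T\mapsto g^n/f^m$ lies over $(0)$ in $D$ and has residue field $F$, so has height $1$ by the dimension formula), forcing $\dim H=2$. Alternatively, one can simply observe that any nonzero proper ideal of a local Noetherian domain has at least one Rees valuation ring, and by (2.4) these all appear in $\xpd(H/D)$. The paper's proof has the same implicit reliance on $t\ge 1$, so this is a shared omission rather than a defect specific to your approach.
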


\begin{proof} The number of Rees valuation rings of an ${\ff m}$-primary ideal $I$ of $D$ is the same as the number of Rees valuation rings of $I\widehat{D}$
\cite[Theorem 5.3]{KV}.  Thus we may assume without loss of generality that $D$ is a complete local ring.
 Let $H$ be the integral closure of $D[g^n/f^m]$, and let $P_1,\ldots,P_t$ be the exceptional prime ideals of the extension $H/D$.
Since $f \not \in gD$ and $g$ is prime in $D$, then $(f,g)$ is an ${\ff m}$-primary ideal of $D$. Thus, as discussed in (2.4), all the Rees valuation rings occur as exceptional prime divisors of the extension $H/D$.
 By Proposition~\ref{Heinzer-Lantz},  there exists a maximal ideal $M$ of $H$ such that $P_1 + \cdots +P_t \subseteq M$.  Now since $D$ is an integrally closed ring, it follows that $D \subseteq H \subseteq D_f$.
 Thus by Theorem~\ref{second theorem}, $D[g^n/f^m]$ is an essentially one-fibered subring of $D_f$, and so the extension $H_M/D$ has only one exceptional prime ideal.  Therefore, $t = 1$ and $(f^m,g^n)$ is one-fibered.
\end{proof}

In Proposition 4.1 of \cite{Swanson}, Swanson proves that if $D$ is a local Noetherian domain with maximal ideal ${\ff m}$ of dimension $d>1$ and $(f_1,\ldots,f_d)$ is an ${\ff m}$-primary ideal of $D$, then for each $N>0$, there exists $m>N$ and a  Rees valuation ring of $(f_1^m,f_2,\ldots,f_d)$ that is not a Rees valuation ring of any ideal $(f_1^k,f_2,\ldots,f_d)$, $k < m$.   Thus in the context of Corollary~\ref{one-fibered corollary}, the ideals $(f^m,g^n)$ are one-fibered, but there are infinitely many DVRs that arise as the Rees valuation rings of these ideals.

\section{Proof of Lemma 6.1}
%We dedicate this section to prove  Theorem \ref{localization}.
\label{MacLane section}

Our arguments in Section 6 reduce the proof of our main result,
Theorem~\ref{second theorem}, to the special case where $H$ is an integrally
closed ring between localizations of the two-dimensional regular
local ring $V[X]$ and the PID $F[X]$, where $V$ is a DVR with
quotient field $F$, so that what remains to be shown is that every
ring between $V[X]$ and $F[X]$  is essentially one-valuated.  We
complete this last  step of the proof  with a  technical analysis of
the valuation theory for this setting.  Using
MacLane's notion of key polynomials as developed in \cite{lane}, our
arguments follow somewhat closely those in \cite{lt}, where similar
results were proved for the case  $V = {\mathbb{Z}}_{(p)}$, where
$p$ is a prime integer. However, we include additional details and
clarify a few points from \cite{lt}.

Throughout this section, $V$ denotes a DVR with maximal ideal $M = \pi V$, quotient field $F$
and residue field $E_0$ (we do not assume any restriction on $F$ and
$E_0$).   We denote by $v$ the corresponding valuation on $F$, and we assume that $v$ has value group ${\mathbb{Z}}$, and hence that $v(\pi) = 1$.
%In this chapter we describe the valuation overrings of
%$V[X]$   and then give a characterization of the Pr\"ufer overrings
%of $V[X]$.
Following the terminology and notation of \cite{lt}, we
 refer to the valuation overrings of $V[X]$ in which $\pi$
is a nonunit as \textit{$\pi$-unitary}, or simply \textit{unitary}.

The basic tool used to study $\pi$-unitary valuation overrings of
$V[X]$ is a technique for extending valuations commuted from
MacLane's paper \cite{lane}. We will describe this procedure quite
in detail, though not proving all the results, and adapt it to our
situation.
 %A result similar to Theorem \ref{localization} has been proven in
% \cite[Theorem 3.2]{lt} in the case  $V = \SZ_{(p)}$.

\begin{definition} \label{first stage} Given a  positive real number $\mu_1$, a \textit{first-stage
extension} $V_1$ of $V$ to $F(X)$  is the valuation ring associated to the
valuation $v_1$  defined by:
 \begin{itemize}

 \item $v_1(X):= \mu_1$;

\item for each $f(X)= \sum_{i=0}^na_iX^i \in F[X]$, $v_1(f(X)) :=
 \min\{v(a_i)+i\mu_1\}$.
 \end{itemize}
 \end{definition}

It is easy to check that $V_1$ is a $\pi$-unitary, rank-one
valuation domain containing $V[X]$ (this last property is  due to the
fact that $\mu_1 > 0$) and that $v_1(a) = v(a)$ for each $a \in F$.
Obviously, $V_1$ is a DVR if and only if $\mu_1$ is rational;
otherwise, $V_1$ is a rank-one nondiscrete valuation domain.

Now, it is possible to extend $v_1$ in order to obtain another
$\pi$-unitary valuation $v_2$ with associated valuation ring $V_2$,
such that $V_1 \cap F[X] \subseteq V_2 \cap F[X]$ and also $M_1 \cap
F[X] \subseteq M_2 \cap F[X]$, where $M_i$ is the maximal ideal of
$V_i$, for $i=1,2$. To do this we need to introduce MacLane's
concept of  \textit{key polynomial} (cfr. \cite[\S 3, 4 and
9]{lane}).   To formulate the definition, we require a
  divisibility relation for elements of $F(X)$:

\begin{definition} Let $w$ be a valuation on $F(X)$ with associated valuation ring $W$, and let  $a, b \in F(X)$.  Then
  $b$ is \textit{equivalence-divisible} by
 $a$ in $W$ if and only if there exists   $c \in F(X)$ such that
 $w(b)=w(ac)$.
 \end{definition}

 \begin{definition} \label{key poly}
For $w$ a valuation on $F(X)$ with valuation ring $W$,
 a polynomial $\phi(X) \in F[X]$ is  a
\textit{key polynomial} over $W$ if all of the following conditions hold.

\begin{enumerate}
\item The leading coefficient of $\phi$ is $1$.

\item If a product of polynomials   $f_1 \cdots f_s \in F[X]$ is
equivalence-divisible  by $\phi$ in $W$, then one of the $f_i$'s is
equivalence-divisible  by $\phi$ in $W$.

\item For any nonzero polynomial $f(X) \in F[X]$ that is equivalence-divisible  by
$\phi$ in $W$ it is the case that $\deg(f) \geq \deg(\phi)$.
\end{enumerate}
\end{definition}

Point (a) has the important consequence that it guarantees that each nonzero polynomial $f(X) \in F[X]$
can be written as:
$$f(X) = a_n(X)(\phi(X))^n +
a_{n-1}(X)(\phi(X))^{n-1} + \cdots + a_1(X)(\phi(X)) + a_0(X), \quad
\textrm{(I)}
$$

{\noindent}where $a_i(X) \in F[X]$ and $\deg(a_i(X)) < \deg(\phi(X))$.

\bigskip

Returning to the first-stage inductive valuation ring $V_1$ from
Definition~\ref{first stage},  MacLane describes a process by which
a choice of key polynomial $\phi \in F[X]$ and real number $\mu$ is
used to extend the valuation $v_1$ to a valuation $v_2$ as follows
\cite[Theorem 4.2]{lane}.
 \begin{enumerate}

 \item Set $v_2\mathrm{}(\phi):= \mu$.

\item  For each nonzero polynomial $f(X) \in F[X]$, put
$$v_2(f(X)) := \min_{i = 1, \ldots, n}\{v_1(a_i(X)) + i\mu \},$$
{\noindent}where the $a_i(X) \in F[X]$ are the coefficients of the
expansion of $f(X)$ in $\phi$ of (I).

\end{enumerate}

Points (b) and (c) of Definition~\ref{key poly} are used to prove that
$v_2(fg)=v_2(f) + v_2(g)$, for each $f,g \in F[X]$, and to show the
 \textit{monotonicity property} (\cite[Lemma 4.3 and
Theorem 5.1]{lane}):
$$v_2(f) \geq v_1(f) , \quad \textrm{for each}  \quad f \in F[X]\backslash \{0\},$$

\noindent (in particular, $v_2(f) = v_1(f)$ if $\deg(f) \leq
\deg(\phi)$).

Monotonicity   implies that $V_1 \cap F[X] \subseteq V_2 \cap F[X]$
and that $M_1 \cap F[X] \subseteq M_2 \cap F[X]$, where $M_1$ and
$M_2$ are respectively the maximal ideals of $V_1$ and $V_2$.

Inductively, iterating the procedure described above, it is possible
to construct sequences of valuation domains $V_1, V_2, \ldots, V_k$,
where each $V_i$ is obtained by extending $V_{i-1}$,  fixing a key
polynomial $\phi_i$ over $V_{i-1}$ and a real positive value
$\mu_i$. In this case we  write $V_i = (V_{i-1}, \phi_i, \mu_i)$.
 MacLane calls $V_i$ an \textit{augmented value} of $V_{i-1}$.

%We will study in the following some multiplicative ideal properties
%of the rings $W \cap F[X]$.

\begin{definition}\label{k-stage}
{\em With the notation and hypotheses given above, the
valuation overring $W$ of $V[X]$ is   \textit{$k^{th}$-stage
inductive} or, more generally, an \textit{inductive valuation ring},  if
there is a sequence of valuation domains $\{V_1, \ldots,
V_{k-1},V_k=W\}$, where $V_1$ is a first-stage extension of $V$,
each  $V_{i}$ is
 an augmented value of $V_{i-1}$,
for $i=2, \ldots, k$ such that $\mu_1,\ldots,\mu_{k-1}$ are positive
rational (so $V[X] \subset V_i$) and  the key polynomials $\phi_i$,
for $i = 1,\ldots, k$, satisfy:
\begin{enumerate}
\item $\deg(\phi_{i+1}) \geq \deg(\phi_{i})$;

\item $\phi_{i+1}$ is not equivalence divisible in $V_i$ by
$\phi_i$.
\end{enumerate}
The sequence $\{V_1, \ldots,
V_{k-1},V_k\}$ is an \textit{inductive sequence}.
% Except for $\mu_k$ (which may be irrational),
%all the other $\mu_i$'s are rational.
If $\mu_k$ is rational, then $V_k$ is {\it inductive commensurable};  otherwise $V_k$ \textit{inductive incommensurable}.}
\end{definition}

It directly follows from the definition that first-stage or, more
generally, the inductive valuations are $\pi$-unitary  since their
values on elements of $F$ are the exact values of $v$, whence $\pi$
is a nonunit. Moreover, it is easy to
check that the inductive commensurable valuations are DVRs, while   the incommensurable are one-dimensional  but   not discrete.

 With
the above notation,  if two consecutive key polynomials $\phi_i$ and
$\phi_{i+1}$ have the same degree, then $V_{i+1} = (V_{i-1},
\phi_{i+1}, \mu_{i+1})$ \cite[Lemma 15.1]{lane}. So one may always restrict consideration to  finite inductive sequences in which the degrees of the
key polynomials are strictly increasing \cite[Theorem 15.2]{lane}.

\bigskip
Each valuation ring in an inductive sequence is conditioned on a key
polynomial, and hence a method for exhibiting key polynomials is
needed in order to form inductive sequences. MacLane gives a way to
find key polynomials over a $k^{th}$-stage inductive commensurable
valuation domain. We  sketch  his method, at each step referring  to
\cite{lane} for the proofs.  The method requires the following facts
about the residue fields of valuation rings in an inductive
sequence.

\begin{lem} \label{resfieldincommensurable}\label{resfieldcommensurable} \textrm{\cite[Theorems 10.2, 12.1 and 14.2]{lane}}
Let  $\{V_1, \ldots, V_{k-1},V_k\}$ be an inductive sequence of valuation
overrings of $V[X]$, and let $M_i$ and $E_i$ denote the maximal ideal and residue field of $V_i$, respectively.    Then:
\begin{itemize}

\item[(1)]  $E_k$  is algebraic over $E_0$.

\item[(2)] If $V_1$ is commensurable (which is the case if $k>1$), then  $$(V_1 \cap F[X])/(M_1 \cap F[X])
\cong E_0[Y],$$ where $Y$ is an indeterminate for $E_0$.

\item[(3)]  If
  $V_k$ is commensurable then there exists a
sequence of algebraic field extensions $E_0 \subseteq E_1 \subseteq
E_2 \subseteq  \cdots \subseteq E_k$ such that for each $i=1,\ldots,k$, $$(V_i \cap F[X])/(M_i
\cap F[X]) \cong E_i[Y],$$  where $Y$ is an indeterminate for $E_i$. \end{itemize}
\end{lem}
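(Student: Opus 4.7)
The plan is to prove all three parts by induction on $k$, using MacLane's explicit formulas for the valuations $v_i$ in terms of the key polynomials $\phi_i$ and prescribed values $\mu_i$. Statement (2) serves as the base case for (3); statement (3) is the inductive step, which exhibits the algebraic part of the residue ring at each stage; and statement (1) then follows because each $E_i$ is obtained from $E_{i-1}$ by adjoining a single algebraic element, so the top field $E_k$ remains algebraic over $E_0$.

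For the base case ($k=1$, commensurable), write $\mu_1 = a/b$ in lowest terms and set $Z := \pi^{-a}X^b$, so that $v_1(Z) = 0$ by direct computation from Definition~\ref{first stage}. I would verify the isomorphism $(V_1 \cap F[X])/(M_1 \cap F[X]) \cong E_0[Z]$ in two steps. First, any $f = \sum a_i X^i \in V_1 \cap F[X]$ has $v(a_i) + i\mu_1 \geq 0$ for every $i$; grouping monomials by the residue of $i$ modulo $b$, only those with $b \mid i$ can achieve $v_1$-value zero, and the collection of such terms is exactly a $V$-linear combination of powers of $Z$. Second, a polynomial relation $\sum c_i Z^i \in M_1$ with $c_i \in V$ unfolds as $\sum c_i \pi^{-ia} X^{ib}$, whose monomials occupy pairwise distinct degrees in $X$; hence no cancellation under $v_1$ is possible, the valuation of the sum equals $\min_i v(c_i)$, and this forces every $c_i \in M$. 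This yields algebraic independence of the image of $Z$ over $E_0$ and establishes (2).

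For the inductive step, assume $(V_{k-1} \cap F[X])/(M_{k-1} \cap F[X]) \cong E_{k-1}[Y]$. When $V_k = (V_{k-1}, \phi_k, \mu_k)$ is commensurable with $\mu_k = c/d$ in lowest terms, the augmentation affects the residue ring in two ways. First, since $v_k(\phi_k) = \mu_k > 0$, the element $\phi_k$ lies in $M_k \cap F[X]$, and its image $\overline{\phi_k} \in E_{k-1}[Y]$ (under the inductive identification) furnishes a polynomial relation that makes the generator $Y$ algebraic over $E_{k-1}$; the next field in the tower is $E_k := E_{k-1}[Y]/(\overline{\phi_k})$. Second, an element of the form $\phi_k^d/u$, with $u \in V_{k-1} \cap F[X]$ chosen of $v_{k-1}$-value $c$, has $v_k$-value zero and serves as the new transcendental generator for $(V_k \cap F[X])/(M_k \cap F[X])$ over $E_k$. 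The main obstacle I anticipate is verifying that $\overline{\phi_k}$ is irreducible as a polynomial in $Y$ over $E_{k-1}$, so that $E_k$ is genuinely a field. This requires translating property (b) of key polynomials (Definition~\ref{key poly}), which rules out nontrivial equivalence-factorizations of $\phi_k$ inside $V_{k-1}$, into ordinary polynomial irreducibility in $E_{k-1}[Y]$. The translation succeeds because the expansion (I) of any $F[X]$-polynomial in powers of $\phi_{k-1}$ uses coefficients of degree strictly less than $\deg(\phi_{k-1})$, and these coefficients residue coherently to polynomials in $Y$ of bounded degree under the inductive identification, so a hypothetical factorization of $\overline{\phi_k}$ lifts to an equivalence-factorization of $\phi_k$. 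With this irreducibility in hand the inductive step completes, and statement (1) follows from the fact that each step $E_{i-1} \subseteq E_i$ is algebraic.
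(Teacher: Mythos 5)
The paper does not prove this lemma; it is cited directly to MacLane. So what you have produced is effectively a reconstruction of MacLane's proofs, and it should be judged on its own merits.

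Your treatment of (2) and of the inductive step toward (3) is on the right track. The base case computation with $Z = \pi^{-a}X^b$ is correct (note that $\pi^{-a} \in F$, so $Z$ really is in $F[X]$), the argument that only indices divisible by $b$ contribute to the residue is right, and the transcendence argument via the coprime exponents producing monomials of pairwise distinct $X$-degree is exactly what is needed. The inductive step is only a sketch, but you have correctly identified the two ingredients (the residue of the key polynomial produces the algebraic extension, and a normalized power of $\phi_k$ furnishes the new transcendental) and you have correctly flagged irreducibility of the residue of $\phi_k$ as the main point, to be obtained from property (b) of Definition~\ref{key poly}. One caveat: your $\overline{\phi_k}$ needs to be normalized --- $\phi_k$ typically has positive $v_{k-1}$-value, so its unnormalized residue is $0$; you must first divide by a suitable unit of $V_{k-1}\cap F[X]$ of the same value (the element ``$f$'' in step (d) of the paper's key-polynomial construction) before taking residues.

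The genuine gap is in your claimed proof of (1). You assert that (1) ``follows because each $E_i$ is obtained from $E_{i-1}$ by adjoining a single algebraic element.'' But this reasoning only applies inside the tower of \emph{base fields} produced by (3), which presupposes $V_k$ commensurable. In the commensurable case the actual residue field of $V_k$ is not algebraic over $E_0$ at all: by Lemma~\ref{radical}, $V_k$ is the localization of $V_k\cap F[X]$ at the center of $M_k$, so its residue field is $\mathrm{Frac}(E_k[Y]) = E_k(Y)$, which has transcendence degree one over $E_0$ (as must be the case for a hidden prime divisor). Thus the content of (1) --- and the reason the paper cites MacLane's Theorem~14.2 in addition to 10.2 and 12.1 --- lies in the \emph{incommensurable} case, which your argument never addresses. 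That case requires a separate observation: when $\mu_k$ is irrational, the minimum in $v_k\bigl(\sum_i a_i(X)\phi_k^i\bigr) = \min_i\{v_{k-1}(a_i)+i\mu_k\}$ is attained at a unique index $i$, so any two elements of equal $v_k$-value reduce, modulo $M_k$, to a single-term quotient $a/b$ with $\deg a,\deg b < \deg\phi_k$, and one must then show that the residues of these quotients exhaust only an algebraic extension of $E_0$. None of this appears in your sketch, and the tower argument you give cannot substitute for it since it proves the wrong statement (a claim about the base fields, not about the residue field of $V_k$) under the wrong hypothesis (commensurability of $V_k$).
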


We also need the following lemma.

\begin{lem}\label{radical}
Let $W$ be a $\pi$-unitary valuation overring of $V[X]$ and $M_w$ be
the maximal ideal of $W$.  Let $D := W \cap F[X]$   and $P := M_w
\cap D$ (this ideal is called the valuation prime of $D$). Then the
radical of the ideal $\pi D$  is a prime ideal of $D$. If  the value
group of $W$ is contained in the additive group of rational numbers,
then $D_P = V$.
\end{lem}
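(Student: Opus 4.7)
Part 1 (the radical is prime). My plan is to give an explicit description of $\sqrt{\pi D}$. Define
\[
R := \bigl\{\, f \in D \;:\; n\, v_W(f) \geq v_W(\pi) \text{ for some positive integer } n \,\bigr\},
\]
where $v_W$ denotes the valuation associated with $W$. I will first verify $R = \sqrt{\pi D}$. The inclusion $\sqrt{\pi D} \subseteq R$ is immediate from multiplicativity of $v_W$. Conversely, if $f \in R$ with $n v_W(f) \geq v_W(\pi)$, then the element $f^n/\pi$ lies in $W$ by construction and in $F[X]$ because $\pi \in F$ is a unit of $F[X]$; hence $f^n/\pi \in W \cap F[X] = D$, which gives $f^n \in \pi D$. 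It then remains to show $R$ is a prime ideal: closure under addition follows from $v_W(f+g) \geq \min\{v_W(f), v_W(g)\}$ (by taking the larger of the two exponents witnessing that $f, g \in R$); closure under multiplication by $D$ is automatic since $v_W|_D \geq 0$; and primality follows from the total ordering of the value group, since $fg \in R$ with $v_W(f) \geq v_W(g)$ yields $2n\, v_W(f) \geq n\, v_W(fg) \geq v_W(\pi)$, so $f \in R$.

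Part 2 ($D_P = V$). Here the plan is to exploit the archimedean character of the value group, which is forced by its being a subgroup of $\mathbb{Q}$, together with the fact that $v_W$ extends $v$ on $F$. The inclusion $V \subseteq D_P$ is automatic from $V \subseteq D$ and $P \cap V = M$ (since $\pi \in P$). For the reverse, given $x = f/g \in D_P$ with $f, g \in D$ and $g \notin P$, one has $v_W(g) = 0$, so $x \in W$ with $v_W(x) = v_W(f) \geq 0$. The rational-value-group assumption should then allow a controlled rewriting of $x$: for any value $\alpha$ in the value group of $W$ one can produce a representative in $F[X]$ realizing $-\alpha$, and this can be used to renormalize $x$ and clear any nonconstant polynomial content in $X$, reducing $x$ to an element of $F$; once $x \in F$, the identity $v_W|_F = v$ forces $x \in V$.

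The main obstacle is the second part: the hypothesis that the value group of $W$ lies in $\mathbb{Q}$ must be used essentially, since without it the value group might fail to be archimedean and one would not be able to manipulate representatives freely. The first part is by contrast purely arithmetic inside the valuation $v_W$ and needs neither a discreteness nor an archimedean assumption on $W$.
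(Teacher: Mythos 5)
Your Part 1 is correct and complete, and it takes a genuinely different route from the paper. The paper's proof observes that $M_w$ is the radical of $\pi W$ inside $W$ and then contracts that identity to $D$ (noting that no prime of $D$ containing $\pi$ survives in $F[X]$), concluding that $\sqrt{\pi D}=M_w\cap D=P$ is prime. Your argument instead identifies $\sqrt{\pi D}$ explicitly as the set $R=\{f\in D: n\,v_W(f)\ge v_W(\pi)\text{ for some }n\ge 1\}$ and checks primality of $R$ directly from the ultrametric inequality, $v_W\ge 0$ on $D$, and the total ordering of the value group. The paper's route leans on the equality $M_w=\sqrt{\pi W}$, which is automatic when $W$ has rank one and also for the infinite limit valuations (rank two, $\pi$ outside the height-one prime)---that is, for every $W$ the lemma is later applied to---but it is not a formal consequence of $W$ being merely $\pi$-unitary (a rank-two composite of the Gauss valuation $V(X)$ with a place of its residue field $E_0(X)$ has $\pi$ inside the height-one prime). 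Your computation needs no such restriction, so in that sense it is the more robust argument for the lemma as literally stated. One thing worth flagging: you prove only that $\sqrt{\pi D}$ is prime, whereas the paper's later uses actually invoke the sharper identification $\sqrt{\pi D}=P$; from your description of $R$ that equality follows exactly when every positive $v_W$-value on $D$ has a multiple exceeding $v_W(\pi)$, i.e., in the rank-one or ``$\pi$-finite'' situation, which is where the lemma is applied.

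Part 2, by contrast, has a genuine gap, and the target you set yourself is not the intended one. The printed conclusion $D_P=V$ is a misprint for $D_P=W$: the two later invocations of the lemma (to get $D_{P_\infty}=V_\infty$ for a limit valuation $V_\infty$) and the cited reference \cite[Lemma 1.3]{HO} are all of the form ``a rank-one valuation ring of $F(X)$ not containing $F[X]$ is recovered as the localization of its intersection with $F[X]$ at the center prime.'' And $D_P=V$ is simply false: already for the first-stage (Gauss) valuation $v_1$ with $v_1(X)=1$ one computes $D=W\cap F[X]=V[\pi^{-1}X]$ and $P=\pi D$, so $D_P=V[\pi^{-1}X]_{(\pi)}=W$, which strictly contains $V$. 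Consequently your plan to ``clear nonconstant polynomial content and reduce $x$ to an element of $F$'' has nowhere to land: $D_P$ is a rank-one valuation ring with quotient field $F(X)$, not $F$, and it contains many elements outside $F$. The correct statement is established by noting $D_P\subseteq W$ (since $v_W\ge 0$ on $D$ and $v_W=0$ on $D\setminus P$), and then $W\subseteq D_P$ from the rank-one hypothesis together with $F[X]\not\subseteq W$, which is precisely what \cite[Lemma 1.3]{HO} supplies.
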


\begin{proof}
 Note that  $\pi$ is a unit in $F[X]$, so the  prime ideals of
 $D$ containing $\pi$ do not lift to $F[X]$. Hence, since    $M_w$ is the
radical in $W$ of the principal ideal $(\pi)$, it follows that $M_w
\cap D$ is  is the radical of $\pi D$ in $D$ and it is a prime
ideal.  If also the value group of $W$ is contained inthe field of
rational numbers, then since $F[X] \not \subseteq W$, it follows
that $D_P = V$ \cite[Lemma 1.3]{HO}.%
%Clearly, $D_P \subseteq W$.  So, let $f/g$ be a nonzero element of
%$W$ with $f,g \in F[X]$.  We need to show that $f/g \in D_P$.  If
%$g$ is a unit in $W$, the result is obvious. So we can assume that
%$w(f)$ and $w(g)$ are both positive. Since the value group of $W$ is
%rational and $w(\pi) = v(\pi) > 0$, we can find positive integers
%$t,n,m,r$ such that $w(f^t/\pi^n) =  w(g^m/\pi^r) = 0$. It follows
%that $w(f^{tr}/\pi^{nr}) = w(g^{mn}/\pi^{nr}) = 0$. Since $f/g \in
%W$ we must have $w(f) \geq w(g)$ and so $mn \geq tr$. Hence,
%$w(f^{mn}/\pi^{nr}) \geq 0$. Also note that $g^{mn}/\pi^{nr}$ is a
%unit in $W$ and hence it is not in $P$. But $g^{mn}/\pi^{nr} \in D$,
%so   $(f/g)^{mn} \in D_P$. Since $D_P$ is integrally closed this
%proves that $f/g \in D_P$.
\end{proof}

 Using the lemmas, we show now how to construct a key polynomial in order to extend an inductive sequence.   Let $W$ be a $k^{th}$-stage inductive commensurable domain, and let $M_w$ be the maximal ideal of $W$.
  \begin{enumerate}

 \item By Lemma~\ref{resfieldcommensurable},  there exists a field $E$ such that $$(W \cap F[X])/(M_w \cap F[X]) \cong E[Y].$$

\item  For $\m$  a height-two prime ideal  of $W \cap F[X]$, we have
  $\m \supset M_w \cap F[X]$. In fact, $W \cap F[X]  \supset V$ and $\m \cap V \neq (0)$, since $\m$ is not upper
  to zero (otherwise it would be height one). Thus $\m \cap V = M =\pi
  V$. By Lemma \ref{radical}, $M_w \cap F[X]$ is
  the radical of  the ideal   $\pi(W \cap F[X])$ and so $\m \supset M_w \cap
  F[X]$.

  \item  Since  $M_w \cap F[X]$ is
  the radical of  the ideal   $\pi(W \cap F[X])$, then $\m/(M_w \cap F[X])$ corresponds to a height one
  prime ideal of $E[Y]$ and so to an irreducible polynomial $\psi(Y) \in
  E[Y]$.

  \item In \cite[Theorem 13.1]{lane} it is shown that, for a fixed irreducible polynomial $\psi \in E[Y]$, there exists a unique
   (modulo equivalence-divisibility in $W$) key polynomial $\phi$ over $W$  such that $\varphi(f \cdot \phi) = \psi$,
   where  $f$ is a suitable polynomial (see \cite[Lemma 11.1]{lane}) and $\varphi$ is the canonical projection:
   $$\varphi: W \cap F[X] \twoheadrightarrow W \cap F[X]/M_w \cap F[X] \cong E[Y].$$

   \item
 By extending $W$ using $\phi$
  and an assigned value $\mu$, we get an augmented valuation domain $W'$
  such that $M_{w'} \cap W \cap F[X] = \m$.
\end{enumerate}
%If $\mu$ is not rational,
%  then we get a rank-one nondiscrete valuation domain and we cannot
%  go on extending $V_2$ using this procedure (for instance, the
%  residue field of $V_2$ is a finite field and so we cannot find a
%  key polynomial over $V_2$ as we have done for $V_1$).

Another  concept introduced in MacLane's paper is that of
\textit{limit valuation}.

\begin{definition} {\em Let be given an infinite sequence of  inductive commensurable
valuation overrings of $V[X]$, $\{V_k\}_{k \geq 0}$ (\cite[page
372]{lane}). We define $v_{\infty}(f) := \lim_{k \rightarrow
\infty}v_k(f)$, for each $f \in F[X]$. By the usual extension to
$F(X)$ (i.e., $v_{\infty}(f/g)= v_{\infty}(f)-v_{\infty}(g)$, $f,g
\in F[X], \, g \neq 0$), $v_\infty$ is a $\pi$-unitary valuation on
$F(X)$ \cite[Theorem 6.2]{lane}.  The valuation $v_\infty$  is
called {\it limit valuation}.  It is a {\it finite limit valuation}
if the only element of $F[X]$ having value $\infty$ is $0$;
otherwise, $v$ is an {\it infinite limit valuation}.}
\end{definition}

 As we will see, the finite limit valuation domains are DVRs and the infinite limit valuation domains are two-dimensional valuation domains with $\textrm{height}(\pi) = 2$.

Note that from the monotonic property of
inductive values, $v_{k+1}(f) \geq v_k(f)$ for each nonzero
polynomial $f \in F[X]$, so either  $\lim_{k \rightarrow \infty}v_k(f)$ exists or is $\infty$.

By \cite[Theorem 15.3]{lane} a commensurable inductive valuation
domain cannot be realized as a limit of an infinite sequence
$\{V_k\}_{k \geq 0}$. Thus the classes of limit valuation domains
and inductive valuation domains are disjoint.

%\bigskip
%MacLane  distinguishes two classes of limit valuations: the
%\textit{finite limit valuations} and the \textit{infinite limit
%valuations}.

%The finite limit valuations are the limit valuations that assume
%only finite values.\footnote{{\bc{At nonzero elements of $F[X]%$?}}}

\begin{prop}\label{finitelimit is DVR}
If $V_\infty$ is a finite limit valuation domain, then $V_\infty$
has rational value group.
\end{prop}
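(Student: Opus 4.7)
The central technical observation driving the argument is a direct consequence of the augmentation procedure: if $f \in F[X]$ is a nonzero polynomial with $\deg f < \deg \phi_{k+1}$, then the expansion of $f$ in powers of $\phi_{k+1}$ consists only of the constant term $f$ itself, so $v_{k+1}(f) = v_k(f)$. My plan is to exploit this stabilization once I establish that the degrees of the key polynomials $\phi_k$ must be unbounded.

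The first step is to show that $\deg \phi_k \to \infty$ under the finite-limit hypothesis, which I will do by contradiction. If the degrees are bounded, then after passing to a subsequence (which does not alter $V_\infty$) all $\phi_k$ have a common degree $d$ for $k$ large. I claim the assigned values $\mu_k = v_k(\phi_k)$ must tend to infinity: otherwise, boundedness of the $\mu_k$ combined with the monotonicity property would force each sequence $v_k(f)$ to stabilize at a rational number, producing a valuation that is itself a commensurable inductive value and contradicting MacLane's result \cite[Theorem 15.3]{lane} that no inductive value can arise as the limit of an infinite inductive sequence. Granted $\mu_k \to \infty$ with constant degree $d$, MacLane's theory provides a polynomial $\phi^* \in F[X]$ of degree $d$ (obtained as a suitable limit of the $\phi_k$ with respect to equivalence-divisibility) for which $v_\infty(\phi^*) = \infty$, contradicting the finite-limit hypothesis.

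Once $\deg \phi_k \to \infty$ is in hand, the second step is immediate. For any nonzero $f \in F[X]$, I choose $k_0$ large enough that $\deg \phi_k > \deg f$ for all $k \geq k_0$. Then $v_k(f) = v_{k_0}(f)$ for all such $k$, so $v_\infty(f) = v_{k_0}(f) \in \mathbb{Q}$, the last membership following because $v_{k_0}$ is a commensurable inductive value---that is, a DVR---and hence takes rational values on $F[X]$. Extending to $F(X)$ via $v_\infty(f/g) = v_\infty(f) - v_\infty(g)$ then shows that the value group of $V_\infty$ is a subgroup of $\mathbb{Q}$, which is what the proposition asserts.

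The main obstacle will be producing the polynomial $\phi^* \in F[X]$ of degree $d$ with $v_\infty(\phi^*) = \infty$ in the bounded-degree case. This requires a careful analysis of how consecutive key polynomials $\phi_k$ of common degree $d$ relate to each other via the equivalence-divisibility relation underlying MacLane's definition of key polynomials, and a limiting argument internal to that framework. Once this construction is in place, the rationality of the value group falls out mechanically from the stabilization of $v_k(f)$ in the second step.
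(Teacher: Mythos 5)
The approach breaks down at its foundational claim: a finite limit valuation need \emph{not} have $\deg \phi_k \to \infty$, and the contradiction you reach for does not arise. In the bounded-degree case you assert that bounded $\mu_k$ would yield ``a valuation that is itself a commensurable inductive value,'' contradicting \cite[Theorem 15.3]{lane}. But that theorem says only that an inductive valuation cannot be realized as the limit of an infinite inductive sequence; it does \emph{not} say that such a limit cannot be a DVR. A finite limit valuation can perfectly well have cyclic value group (and hence be a DVR) while remaining a limit valuation --- the two classes are disjoint, as noted just before the proposition, so being a DVR gives no grounds for concluding that $v_\infty$ is inductive. Consequently your intended contradiction never appears, and the bounded-degree finite-limit case is genuinely possible. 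Relatedly, the polynomial $\phi^*$ with $v_\infty(\phi^*)=\infty$ that you hope to construct need not exist in $F[X]$: the natural candidate limit of the $\phi_k$ typically has coefficients only in the $\pi$-adic completion of $F$, not in $F$ itself, which is precisely how $v_\infty$ can remain a \emph{finite} limit valuation with degrees bounded.

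Because of this, your second step handles only one of the two cases. The paper splits on the same dichotomy --- degrees stabilize or not --- and your argument for the non-stabilizing case matches the paper's (once $\deg \phi_k > \deg f$ the values $v_k(f)$ stabilize, by \cite[Theorem 5.1]{lane}, so $v_\infty(f) \in \bigcup_k \Gamma_k \subseteq \mathbb{Q}$). But the stabilizing case must be dealt with head-on, not ruled out. The paper does this using \cite[Lemma 6.3(i)]{lane} ($\mu_j = v_t(\phi_{j+1}-\phi_j)$ for $j>t$, hence $\mu_j \in \Gamma_t$) together with \cite[Theorem 6.6]{lane} ($\Gamma_j$ is generated freely by $1,\mu_1,\ldots,\mu_j$), which give $\Gamma_j \subseteq \Gamma_t$ for all $j>t$. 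Each monotone sequence $\{v_k(f)\}$ then lies in the cyclic group $\Gamma_t$, where it must either stabilize or diverge; the finite-limit hypothesis forces stabilization, so $\Gamma_\infty \subseteq \Gamma_t$ and is rational. You would need an argument of this kind for the bounded-degree case rather than an (impossible) exclusion of it.
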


\begin{proof}
Let be given a finite limit valuation domain $V_\infty = \lim_k V_k$, where
$\{V_k\}_{k \geq 0}$ is an infinite sequence of inductive
commensurable valuation domains.  We show that the value group $\Gamma_\infty$ of $V_\infty$ is
rational.

By hypothesis the value groups $\Gamma_k$ of the $V_k$'s are
rational and, in particular, they are cyclic.
As regards the key polynomials $\phi_k$, there are two possibilities:

\begin{enumerate}
\item the sequence $\{\deg(\phi_k)\}_{k \geq 0}$ doesn't stabilize;

\item the sequence $\{\deg(\phi_k)\}_{k \geq 0}$  stabilizes.
\end{enumerate}

In the first case, by \cite[Theorem 5.1]{lane}, for any $f(X) \in
F[X]$ the sequence ${v_k(f)}_{k>0}$ stabilizes (it is enough that
the degree of $\phi_k$ exceeds the degree of $f(X)$). Thus,
$v_\infty(f) \in \cup_k \Gamma_k$, and so it is rational.

In the second case we may assume that $\{\deg(\phi_k)\}_{k \geq 0}$
stabilizes at some point $k=t$.
Then, for every $j>t$, by \cite[Lemma 6.3(i)]{lane} we have that
$\mu_j = v_t( \phi_{j+1} - \phi_j )$. This means in particular that
$\mu_j \in \Gamma_t$.
Again by \cite[Theorem 6.6]{lane}, $\Gamma_j$ is generated as a free
group by $1,\mu_1,...,\mu_j$, thus $\Gamma_j \leq \Gamma_t$.
Now, $\Gamma_t$ is cyclic, whence a sequence of elements  in
$\Gamma_t$ (for instance $\{v_k(f)\}_{k \geq 0}$) either stabilizes
or has limit equal to infinity. Since $V_\infty$ is a finite limit
valuation domain, this implies that $\{v_k(f)\}_{k>0}$ has limit in
$\Gamma_t$. Therefore, $\Gamma_\infty$ is cyclic and $V_\infty$ is a
DVR.
\end{proof}

The infinite limit valuations, as the name suggests,  assume also
infinite values, in which case the corresponding valuation rings
have Krull dimension $2$.   In this case, since $v_{\infty}(\pi)=
v(\pi) < \infty$,   we have that $\pi$ is not contained in the
height-one prime ideal of $V_{\infty}$.

We also observe that, for an infinite limit valuation domain
$V_\infty$, the degrees of the key polynomials $\phi_k$ cannot
increase indefinitely. In fact, as pointed out in the proof of
Proposition \ref{finitelimit is DVR}, if the sequence
$\{\deg(\phi_k)\}_{k \geq 0}$ doesn't stabilize, then the value
group of $V_\infty$ is rational, hence contradicting  the fact that
$V_\infty$ is two-dimensional.

\begin{lem}\label{height-one}
Let $V_\infty$ be an infinite limit valuation domain   and let $P$
be its height-one prime ideal. Then $(V_\infty)_P =F[X]_{(f)}$, for
some irreducible polynomial $f \in F[X]$, and the center $P_\infty$
of $V_\infty$ in $D = V_\infty \cap F[X]$ has  height 2 and it is
the only prime ideal of $D$ containing $\pi$.
\end{lem}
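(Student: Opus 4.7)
The plan is to exploit the composite-valuation description of the rank-$2$ ring $V_\infty$ in terms of its localization at $P$ and the residue-field valuation $V_\infty/P$; this will realize $D$ as a simple pullback from which everything follows. First I would identify $f$: the set $I := P \cap F[X]$ is an ideal of $F[X]$, using that $F[X] \subseteq (V_\infty)_P$ because $\pi \notin P$, and it is prime as the contraction of $P(V_\infty)_P$. It is nonzero by the very definition of an infinite limit valuation, and since $F[X]$ is a PID, $I = (f)F[X]$ for some irreducible $f$. Then $(V_\infty)_P$ is a rank-one valuation overring of $F[X]$ not equal to $F(X)$, hence a localization of $F[X]$ at a prime; since its maximal ideal contracts to $(f)$, we must have $(V_\infty)_P = F[X]_{(f)}$.

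The central step is the composite description $V_\infty = \{a \in F[X]_{(f)} : \bar a \in V'\}$, where $V' := V_\infty/P$ is a DVR of $L := F[X]/(f)$ and $\bar a$ denotes the image of $a$ in $L$. This is the standard structure theorem for a $2$-dimensional valuation ring expressed as a composite of a rank-$1$ valuation on the full field with a rank-$1$ valuation on the residue field. Intersecting with $F[X]$ gives $D = \psi^{-1}(V')$, where $\psi : F[X] \twoheadrightarrow L$ is the natural surjection. Every nonzero element of $(f)F[X]$ has $v_\infty = \infty$, so $(f)F[X] \subseteq D$, and this ideal is simultaneously $\ker(\psi|_D)$ and $P \cap D$. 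The first isomorphism theorem then yields
\[
D/(P \cap D) \;\cong\; V',
\]
a DVR.

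Everything else is formal. Since $V'$ has exactly two prime ideals, the primes of $D$ containing $P \cap D$ are precisely $P \cap D$ itself and $P_\infty$ (the preimage of the maximal ideal of $V'$ under the map $D \twoheadrightarrow V'$), so $P_\infty$ is maximal with height $2$. For uniqueness of the prime above $\pi$, I would show that every prime $Q$ of $D$ containing $\pi$ also contains $P \cap D$: since $f/\pi \in F[X]$ and $v_\infty(f/\pi) = \infty \geq 0$, we have $f/\pi \in D$, so $f \in \pi D \subseteq Q$, and the same argument applies to every $g \in (f)F[X]$. Hence $Q$ corresponds to a prime of $V'$ containing the nonunit image of $\pi$, which can only be the maximal ideal; thus $Q = P_\infty$.

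The main technical obstacle is the composite-valuation description, which must be verified using the rank-$2$ structure of the value group $\Gamma$ of $V_\infty$ with convex subgroup $\Delta$ corresponding to $P$: one checks that $a \in (V_\infty)_P$ has $v_\infty(a) \geq 0$ precisely when either $a \in P(V_\infty)_P$ (in which case $\bar a = 0 \in V'$) or $v_\infty(a) \in \Delta^{\geq 0}$ (in which case $\bar a$ has nonnegative value in $V'$).
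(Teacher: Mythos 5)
Your proof is correct, and the first part (identifying $(V_\infty)_P$ with $F[X]_{(f)}$ by noting $\pi \notin P$ and invoking the overring structure of the PID $F[X]$) is essentially the same as the paper's. The second half takes a genuinely different route. The paper establishes the height-two claim by exhibiting the chain $(0) \subsetneq P\cap F[X] \subsetneq P_\infty$ (the strict inclusion coming from $\pi \in P_\infty \setminus P$), and then quotes Lemma~\ref{radical}, which says $P_\infty = \sqrt{\pi D}$ is prime, to conclude $P_\infty$ is the unique minimal prime over $\pi$; combined with height $2 = \dim D$, it is also maximal, hence the only prime over $\pi$. You instead develop the composite (pullback) description $D = \psi^{-1}(V')$ where $\psi : F[X] \twoheadrightarrow L := F[X]/(f)$ and $V' = V_\infty/P$ is the rank-one valuation ring of $L$ induced on the second factor of the rank-two structure; the isomorphism $D/(P\cap D) \cong V'$ then delivers maximality and height two at a stroke, and your clean observation that $g/\pi \in D$ for every $g \in (f)F[X]$ (since $v_\infty(g/\pi)=\infty$) forces every prime over $\pi$ to contain $P\cap D$, hence to be $P_\infty$. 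The paper's route is shorter because it reuses Lemma~\ref{radical}; yours is more self-contained and additionally makes the ring-theoretic structure of $D$ near $P_\infty$ transparent (it is a pullback of a valuation ring of a PID along a valuation ring of the residue field), which is of independent interest. Two minor points worth tightening if you write this up: you assert that $V'$ is a DVR, which requires an argument (it follows, as in the proof of Proposition~\ref{finitelimit is DVR}, from the stabilization of the degrees $\deg \phi_k$), but your argument only actually needs $V'$ to be a one-dimensional valuation ring, which is automatic from the rank-two structure of $V_\infty$; and the bound height$(P_\infty)\le 2$ should be attributed to $D$ being an overring of the two-dimensional Noetherian domain $V[X]$.
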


\begin{proof}
By construction we have that $v_\infty(\pi)=v(\pi) < \infty$. So
$\pi \in M_\infty \backslash P$, where $M_\infty$ is the maximal
ideal of $V_\infty$. Then $(V_\infty)_P \supseteq F[X]$ and so
$(V_\infty)_P =F[X]_{(f)}$ for some irreducible polynomial $f \in
F[X]$. Also, $P \cap F[X] \subseteq P_\infty$ and $\pi \notin P \cap
F[X]$, whence $P_\infty$ is height-two. Since, by Lemma
\ref{radical} $P_\infty$ is the radical of $\pi$ in $D$, it follows
that $\pi$ is the only prime ideal of $D$ containing $\pi$.
\end{proof}

%We denote by $P_{\infty}$ the valuation prime of $V_{\infty} \cap
%F[X]$.
%\begin{rem}
%We observe that if the sequence $\{V_k\}_{k \geq 0}$ is infinite
%then it cannot converge to an inductive valuation domain. In fact,
%if the degrees of the key polynomials $\{\phi_k\}_{k \geq 0}$ are
%not constant from a certain index $\bar{k}$ on, then it is always
%possible to have that the sequence $\{\deg(\phi)_k\}_{k \geq 0}$ is
%strictly increasing   (\cite[Theorem 15.2]{lane}) and the limit
%$V_\infty$ cannot be inductive by \cite[Theorem 15.3]{lane}.
%
%Conversely, if the degrees of the $\phi_k$'s are constant from a
%certain index $\bar{k}$ on, then the sequence $\{\mu_k\}_{k \geq
%\bar{k}}$ is strictly monotone increasing
%\end{rem}

The next result, whose proof is commuted from
\cite[Theorem 8.1]{lane}, gives a description of valuation overrings
of $V[X]$ for which $\pi$ has finite value.

\begin{prop}\label{charMaclanetype}
Each $\pi$-unitary valuation overring $W$ of $V[X]$ such that
$w(\pi) < \infty$ is a $k^{th}$-stage inductive valuation domain,
for some $k < \infty$, or it is a limit valuation
domain.
\end{prop}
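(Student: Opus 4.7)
The plan is to mimic the proof of MacLane's \cite[Theorem 8.1]{lane}: starting from $w$, I construct an inductive sequence of valuations $V_1, V_2, \ldots$ such that $v_k(f) \leq w(f)$ for every $f \in F[X]$, and then show the procedure either terminates in $w$, proving $W$ is inductive of finite stage, or yields a limit valuation that coincides with $w$.

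For the base step, I set $\mu_1 := w(X)$. Since $X \in W$ we have $\mu_1 \geq 0$, and after replacing $X$ by $X - c$ for a suitable $c \in V$ when $\mu_1 = 0$ (a change of variables that preserves $V[X]$), I may assume $\mu_1 > 0$. I then form the first-stage extension $V_1$ with $v_1(X) = \mu_1$. The ultrametric inequality for $w$, combined with the fact that $w(a) = v(a) = v_1(a)$ for $a \in F$, gives $v_1(f) \leq w(f)$ for every $f \in F[X]$, with equality on monomials.

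At the inductive step, suppose an inductive sequence $V_1, \ldots, V_k$ has been built with $v_k \leq w$ on $F[X]$. If equality holds throughout, then $v_k = w$ on $F(X)$ and $W = V_k$, so $W$ is $k^{th}$-stage inductive. Otherwise, I choose a monic polynomial $\phi_{k+1} \in F[X]$ of least degree with $v_k(\phi_{k+1}) < w(\phi_{k+1})$, and set $\mu_{k+1} := w(\phi_{k+1})$. I would then verify that $\phi_{k+1}$ is a key polynomial over $V_k$: the minimality of $\deg(\phi_{k+1})$ directly gives axiom (c) of Definition~\ref{key poly}, while translating that same minimality through the residue-field description of Lemma~\ref{resfieldcommensurable} and MacLane's correspondence between key polynomials and irreducible polynomials of $E_k[Y]$ (see \cite[Theorem 13.1]{lane}) yields axiom (b). By construction $V_{k+1} = (V_k, \phi_{k+1}, \mu_{k+1})$ extends the sequence, and $v_{k+1} \leq w$ is preserved because $w$ applied to the $\phi_{k+1}$-expansion $f = \sum a_i(X) \phi_{k+1}^i$ dominates $\min_i \{v_k(a_i(X)) + i \mu_{k+1}\} = v_{k+1}(f)$ via the ultrametric inequality. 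By \cite[Lemma 15.1]{lane} I may collapse consecutive key polynomials of equal degree, arranging that $\deg(\phi_k)$ is strictly increasing and the sequence is a bona fide inductive sequence.

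If the process halts at some finite $k$, then $W = V_k$ is an inductive valuation. Otherwise it produces an infinite inductive sequence; define $v_\infty(f) := \lim_k v_k(f)$. For any $f \in F[X]$, once $\deg(\phi_{k+1}) > \deg(f)$ the monotonicity remark following Definition~\ref{first stage} gives $v_{k+1}(f) = v_k(f)$, so the sequence $v_k(f)$ stabilizes, and the minimality criterion used to select $\phi_{k+1}$ forces the stable value to equal $w(f)$. Hence $v_\infty = w$ on $F[X]$, and therefore on $F(X)$, exhibiting $W$ as a limit valuation. The main obstacle will be the verification that the minimal-degree separating polynomial $\phi_{k+1}$ is genuinely a key polynomial over $V_k$; this requires checking the equivalence-divisibility factorization axiom, and is precisely the step where the residue-field structure of Lemma~\ref{resfieldcommensurable} together with the correspondence between irreducibles in $E_k[Y]$ and key polynomials over $V_k$ must be exploited.
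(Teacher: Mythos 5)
Your overall strategy (mimic MacLane's Theorem~8.1: build an inductive sequence dominated by $w$ and show it either stops at $W$ or converges to $W$) is the same as the paper's, and your induction step — choose a monic polynomial of minimal degree on which $v_k$ and $w$ disagree and promote it to a key polynomial — is exactly what the paper does. But there is a genuine gap in your treatment of the limit case, and it is precisely the case that carries all the content.

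You argue that $v_\infty = w$ by saying that for any $f$, once $\deg(\phi_{k+1}) > \deg(f)$ the values $v_k(f)$ stabilize and minimality forces the stable value to equal $w(f)$. This reasoning presupposes that $\deg(\phi_k) \to \infty$. But the degrees of the key polynomials need not grow without bound, and for limit valuations they typically do not: as the paper observes just before Lemma~\ref{height-one}, for an infinite limit valuation the degrees necessarily stabilize, and the finite-limit case of Proposition~\ref{finitelimit is DVR} also includes eventually-constant degrees. For a polynomial $f$ with $\deg(f) \geq \lim_k \deg(\phi_k)$, your argument gives no control on $v_k(f)$. Relatedly, your appeal to \cite[Lemma 15.1]{lane} to ``collapse consecutive key polynomials of equal degree, arranging that $\deg(\phi_k)$ is strictly increasing'' does not work for an infinite sequence: if infinitely many key polynomials share a degree, collapsing reduces the count without ever producing a strictly increasing tail, and indeed Definition~\ref{k-stage} only requires the degrees to be non-decreasing.

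The paper closes this gap by a different argument. Suppose $w(f) > v_\infty(f)$ for some nonzero $f$. By the MacLane equivalence (condition (a) iff (b) in the proof, from \cite[Theorem 8.1]{lane}), $f$ is equivalence-divisible by $\phi_{i+1}$ in $V_i$ for every $i$, and then \cite[Theorem 5.1]{lane} gives a strict inequality $v_{i+1}(f) > v_i(f)$ at every stage. Combined with property~(2) of the constructed sequence this forces $\deg(f) \geq \deg(\phi_i)$ for all $i$, so the degrees stabilize at some stage $t$; the value group then stabilizes at the cyclic group $\Gamma_t$, and a strictly increasing sequence in a cyclic (hence discrete) group diverges to $\infty$, a contradiction. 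This value-group argument is the essential point your proposal omits. A smaller issue: you set $\mu_{k+1} := w(\phi_{k+1})$ unconditionally, but if that value is irrational then $V_{k+1}$ is incommensurable and the key-polynomial machinery you invoke (Lemma~\ref{resfieldcommensurable}, \cite[Theorem 13.1]{lane}) is only available over commensurable stages; the paper instead takes a rational $\mu_{k+1}$ strictly between $v_i(\phi_{k+1})$ and $w(\phi_{k+1})$ in that case.
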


\begin{proof}
Notice that $W$ can always be normalized and so we can suppose that
$w(\pi) = v(\pi)$ (whence $w(a) = v(a)$, for all $a \in F$).

Using an inductive process, we will construct a sequence of
inductive   valuation domains $\{V_1, V_2, \ldots, V_k\}$, $V_i =
(V_{i-1},\phi_i, \mu_i)$ with $V_1, V_2, \ldots, V_{k-1}$
commensurable and such that for all nonzero $f(X) \in F[X]$ the
following conditions hold:

\begin{enumerate}
\item $w(f(X)) \geq v_k(f(X))$;

\item $\deg(f(X)) < \deg(\phi_k) \Rightarrow w(f(X)) = v_k(f(X))$;

\item $w(\phi_i(X)) \geq v_k(\phi_i(X)) = \mu_i$, for each $i = 1,
\ldots,k$.
\end{enumerate}

We start by defining $V_1$ as a first-stage inductive valuation
domain with $\phi_1 := X$ and $\mu_1 := w(X)$. If $w(f(X)) =
v_1(f(X))$, for each $f(X) \in F[X]$, then $W = V_1$ and we have
done.  If not,  we construct $V_1$ by taking a rational value $\mu_1
\leq w(X)$ (actually, we put $\mu_1 = w(X)$ if $w(X)$ is rational),
so that $V_1$ is inductive commensurable. It is easy to check that
$V_1$ satisfies conditions (1)-(3).

Suppose we have constructed a commensurable inductive value $V_i$
satisfying the above properties (1)-(3) and such that $W \neq V_i$.
Then there exists a polynomial $f(X) \in F[X]$ such that $w(f(X)) >
v_i(f(X))$. Choose $\psi(X) \in F[X]$ of minimal degree among the
polynomials with $w(\psi(X))
> v_i(\psi(X))$. It is always possible to choose $\psi(X)$ having leading
coefficient $1$.

Following exactly the same argument used in \cite[Theorem
8.1]{lane}, we have that the following two conditions are equivalent
for each $f(X) \in F[X]$:

\begin{enumerate}
\item $w(f(X)) > v_i(f(X))$

\item $f(X)$ is equivalence-divisible by $\psi(X)$ in $V_i$.

\end{enumerate}

The equivalence of (a) and (b) gives directly that $\psi(X)$ is a
potential key polynomial over $V_i$ and so it can be used to
construct an augmented value $V_{i+1}$ of $V_i$. If putting
$\mu_{i+1} = v_{i+1}(\psi(X)) = w(\psi(X))$ we get that $W =
V_{i+1}$, then we are done. If not,   we put $\mu_{i+1} :=
w(\psi(X))$ if the latter is a rational value,
otherwise (if
$w(\psi(X))$ is irrational) we put $\mu_{i+1}$ to be a fixed
rational value such that $v_i(\psi(X)) < \mu_{i+1} < w(\psi(X))$.
The proof that $V_{i+1}$ satisfies conditions (1)-(3) is exactly as
in \cite[Theorem 8.1]{lane}.

So the process will finish in the case   there is an integer $k$
such that $W = V_k$, or it will give an infinite sequence of
commensurable inductive valuation domains $\{V_i\}_{i \geq 0}$ such
that
$$w(f(X)) \geq v_\infty(f(X)) = \lim_{i}v_i(f(X)).$$

We show that $w(f(X)) = v_\infty(f(X))$  for all $f(X) \in F[X]$ and
so $W=V_\infty$.

Suppose, by contradiction, that there exists a nonzero polynomial
$f(X) \in F[X]$ such that $w(f(X)) > v_\infty(f(X))$. Consider the
sequence $\{v_i(f(X))\}_{i \geq 0}$, that is monotone
non-decreasing. Then $w(f(X)) > v_i(f(X))$, for all $i \geq 0$. This
implies that $f(X)$ is equivalence-divisible by $\phi_{i+1}$ in
$V_i$ (from the above equivalence of points (a)-(b), where $\psi =
\phi_{i+1}$). Then, by \cite[Theorem 5.1]{lane}, $v_{i+1}(f(X)) >
v_i(f(X))$ and this implies  (by (2)) that the degrees of the key
polynomials stabilize. So there exists an integer $t$ such that
$\deg(\phi_i) = M$, for each $i > t$. This implies that $\Gamma_t =
\Gamma_i$ for each $i > t$ (\cite[page 376]{lane}). So
$\{v_i(f(X))\}_{i \geq 0} \subset \Gamma_t$, that is a cyclic group,
whence $w(f(X)) \geq \lim_{i}v_i(f(X)) = \infty$. This implies that
$f(X)=0$, against the assumption on $f(X)$. It follows that
$W=V_\infty$.
\end{proof}

%\rc \textsc{comment}: the proof does not apply to valuations having
%infinite value in $\pi$ since we cannot even construct $V_1$. In
%fact, here we would need that $w(a) = v(a)$, for each $a \in F$.
%
%What it is strange in this proof is that MacLane does not care of
%the fact that the values $\mu_k$ are rational or not, and he
%constructs key polynomials apparently also to extend incommensurable
%valuation domains. I will check it again.
%
%If we adopt the principle that in an inductive sequence all the
%valuations, except maybe the last one, are commensurable, then I
%think that this theorem could be proven by constructing the $V_k$ as
%follows: take the key polynomial defined by the maximal ideal of
%$V_{k-1} \cap F[X]$ on which $M_w$ contracts. This maximal ideal
%gives an indication for a possible key polynomial and putting $\mu_k
%:= w(\phi_k)$ (distinguishing rational and irrational cases) we can
%argue as above and find that $w(f(X)) \geq v_k(f(X))$. I will write
%down the details to see if it works. Also in this case I have to
%understand what happens to upsidedown, since for them this should
%not apply. \ec

%\rc I am verifying the proof of Theorem 14.2 by MacLane, since I
%think that it applies to the case in which the value group of $W$
%has real numbers, without explicitly assuming that the valuation
%domain is incommensurable. In this case, $W_Q$ could never be a
%valuation domain with "irrational" value group. \ec

 \begin{definition}\label{order}
 {\em  Given two $\pi$-unitary valuation overrings of $V[X]$, $V_1,V_2$  with maximal ideals
  $M_1$ and $M_2$ respectively, we say that $V_1
  \preceq V_2$ if:
  $$V_1 \cap F[X] \subseteq V_2 \cap F[X] \quad \textrm{and} \quad M_1 \cap F[X]
  \subseteq M_2 \cap F[X].$$}
  \end{definition}

%In \cite[Definition 2.1 and Corollary 2.6]{lt}  the above relation
%$\preceq$ is defined   on the set of $p$-unitary valuation overrings
%of $\SZ_{(p)}[X]$ and it is shown that it is
% an order relation in this specific set. We will recover and adapt to our more general case some results proven in
% \cite{lt}.

We  will show in Corollary~\ref{maximal} that the limit valuation
domains are maximal elements in the set of the $\pi$-unitary
overrings of $V[X]$, with respect to the relation $\preceq$ defined
above. More precisely, if $W$ is a limit valuation domain and $W'$
is another $\pi$-unitary valuation domain (both with quotient field
$F(X)$) such that $W \preceq W'$, then $W=W'$.

\begin{lem}\label{max finite_val_prime}
Let $V_\infty$ be a finite limit valuation domain.  Then the residue
field of $V_\infty$ is algebraic over the residue field $E_0$ of
$V$. Let $P_\infty$ be the center of $V_\infty$ in $D = V_\infty
\cap F[X]$. Then $P_\infty$ is a maximal ideal of $D$ and it is
height-one (whence it is the only prime ideal of $D$ containing
$\pi$).
\end{lem}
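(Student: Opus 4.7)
The plan is to show that $P_\infty$ is maximal in $D$ by showing $D/P_\infty$ is a field algebraic over $E_0$; the height-one assertion will then follow because, by Proposition~\ref{finitelimit is DVR}, $V_\infty$ is a DVR, and Lemma~\ref{radical} (read with $W = V_\infty$ so that $D_{P_\infty} = V_\infty$) implies $P_\infty$ has the same height as the maximal ideal of $V_\infty$. The central algebraic input is that $D$ is a directed union of the Noetherian subrings $D_k := V_k \cap F[X]$ and that each quotient $D_k/(P_\infty \cap D_k)$ is a simple algebraic extension of the field $E_k$ from Lemma~\ref{resfieldcommensurable}.

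First I would set up the directed union. Reusing the dichotomy from the proof of Proposition~\ref{finitelimit is DVR}, for every $f \in F[X]$ the sequence $v_k(f)$ eventually stabilizes: in Case 1 this happens once $\deg \phi_k > \deg f$, and in Case 2 it happens because $\{v_k(f)\}$ is a non-decreasing sequence with finite limit inside the cyclic group $\Gamma_t$. Thus if $v_\infty(f) \geq 0$, then $v_k(f) \geq 0$ for all sufficiently large $k$, so $D = \bigcup_k D_k$. Since inclusion $D_k \hookrightarrow D_{k+1}$ carries $P_\infty \cap D_k$ to $P_\infty \cap D_{k+1} \cap D_k$, we obtain the identification
\[
D/P_\infty \;=\; \varinjlim_k \, D_k/(P_\infty \cap D_k).
\]

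Next I would analyze a single quotient. By Lemma~\ref{resfieldcommensurable}, $D_k/P_k \cong E_k[Y]$ where $E_k$ is algebraic over $E_0$. Invoking MacLane's theory (the key-polynomial construction summarized in the procedure following Lemma~\ref{radical}), the next key polynomial $\phi_{k+1}$ in the inductive sequence, after the normalization $g := \phi_{k+1}^{\, e}/\pi^{c}$ (where $v_k(\phi_{k+1}) = c/e$ in lowest terms), lies in $D_k$ with $v_k(g) = 0$ and projects to an irreducible polynomial $\psi \in E_k[Y]$. Because $v_\infty(\phi_{k+1}) = \mu_{k+1} > v_k(\phi_{k+1})$, one has $v_\infty(g) = e\mu_{k+1} - c > 0$, so $g \in P_\infty \cap D_k$ and hence $\psi \in (P_\infty \cap D_k)/P_k$. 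Now $(\psi)$ is maximal in $E_k[Y]$ since $\psi$ is irreducible over a field, and $(P_\infty \cap D_k)/P_k$ is a prime ideal of $E_k[Y]$ containing $(\psi)$; it cannot be all of $E_k[Y]$ (since $1 \notin P_\infty$), so it equals $(\psi)$. Therefore $D_k/(P_\infty \cap D_k) \cong E_k[Y]/(\psi)$ is a finite algebraic extension of $E_k$, hence of $E_0$.

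Finally, a directed union of fields each algebraic over $E_0$ is a field algebraic over $E_0$, so $D/P_\infty$ is such a field. This gives simultaneously that $P_\infty$ is maximal in $D$ and that the residue field $V_\infty/M_\infty = \mathrm{Frac}(D/P_\infty) = D/P_\infty$ is algebraic over $E_0$; combined with the height-one conclusion from Proposition~\ref{finitelimit is DVR} and Lemma~\ref{radical}, and the fact that $P_\infty = \sqrt{\pi D}$ by Lemma~\ref{radical}, we also obtain that $P_\infty$ is the unique prime of $D$ containing $\pi$. The main technical obstacle is the MacLane key-polynomial input used in the previous paragraph: one must verify that the normalized key polynomial $g$ really does project to an irreducible element of $E_k[Y]$ and lies in $P_\infty$, which requires a careful bookkeeping of MacLane's residue-field identification at each stage of the inductive sequence.
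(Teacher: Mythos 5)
Your argument is correct in its conclusions but proceeds along a genuinely different route from the paper. The paper treats the algebraicity of the residue field of $V_\infty$ over $E_0$ as a black box, citing \cite[Theorem 14.1]{lane}; with that in hand, it simply observes that $D/P_\infty$ is a domain sandwiched between $E_0$ and the residue field of $V_\infty$, hence integral over $E_0$, hence a field — giving maximality of $P_\infty$ — and then invokes $D_{P_\infty} = V_\infty$ (from Lemma~\ref{radical} together with Proposition~\ref{finitelimit is DVR}) to get the height-one claim. You instead re-derive the algebraicity from scratch by realizing $D$ as the directed union of the $D_k = V_k \cap F[X]$ and analyzing $(P_\infty \cap D_k)/P_k$ inside $D_k/P_k \cong E_k[Y]$, which is a sound and more self-contained strategy. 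One observation lets you dispense with the technical obstacle you flag at the end: you do not actually need the image $\psi$ of the normalized key polynomial $g$ to be irreducible in $E_k[Y]$. It suffices that $\psi \ne 0$, which is immediate from $v_k(g) = 0$, i.e., $g \in D_k \setminus P_k$. Then $(P_\infty \cap D_k)/P_k$ is a nonzero proper prime ideal of the PID $E_k[Y]$, hence automatically maximal, and $D_k/(P_\infty \cap D_k)$ is a finite field extension of $E_k$ with no need to identify the generator or track the ``suitable polynomial'' $f$ in MacLane's residue-field bookkeeping. With that simplification your directed-limit argument is complete and recovers both the algebraicity and the maximality simultaneously, whereas the paper obtains the algebraicity by citation and finishes with the one-line observation about domains between two fields.
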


\begin{proof}
The first part of the statement is completely proved in
\cite[Theorem 14.1]{lane}. For the second part, it is sufficient to
observe that $E_0 \subset D/P_\infty \subset V_\infty/P_\infty$, so
$D/P_\infty$ is a field, since the extension $E_0 \subset
V_\infty/P_\infty$ is algebraic. By Proposition \ref{finitelimit is
DVR} the value group of $V_\infty$ is rational and so $D_{P_\infty}
= V_\infty$, whence $P_\infty$ is height-one and the thesis follows.
\end{proof}

%Recall that a domain $R$ is a {\it Pr\"ufer domain} if for each prime ideal $P$ of $R$, $R_P$ is a valuation domain.

\begin{prop}\label{finite limit prufer} \label{infinite limit prufer}
If $V_\infty$ is a limit valuation domain, then $D :=
V_\infty \cap F[X]$ is a Pr\"ufer domain.
\end{prop}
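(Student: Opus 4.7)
The plan is to verify that $D_Q$ is a valuation domain for every prime ideal $Q$ of $D$. The key preliminary observation is that $D[\pi^{-1}] = F[X]$: for any $g \in F[X]$, multiplication by a sufficiently high power of $\pi$ places $g$ in $V[X] \subseteq D$. Consequently, the primes of $D$ split into two classes: those not containing $\pi$, which correspond bijectively to the primes of $F[X]$ and whose localizations in $D$ are localizations of the PID $F[X]$, hence DVRs (or $F(X)$ in the case of $(0)$); and those containing $\pi$, which by Lemma \ref{radical} must contain the prime ideal $P_\infty := \sqrt{\pi D}$.

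I would next observe that $P_\infty$ is a maximal ideal of $D$. In the finite-limit case, this is exactly the content of Lemma \ref{max finite_val_prime}. In the infinite-limit case, $P_\infty$ has height $2$ by Lemma \ref{height-one}, and $D$ has Krull dimension exactly $2$ since chains of primes of $D$ avoiding $\pi$ have length at most $1$ (because $\dim F[X]=1$), so $P_\infty$ is maximal. Thus it suffices to show $D_{P_\infty}$ is a valuation domain.

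In the finite-limit case, Proposition \ref{finitelimit is DVR} gives that $V_\infty$ has rational value group, so Lemma \ref{radical} immediately yields $D_{P_\infty} = V$, a DVR. In the infinite-limit case, I aim to establish $D_{P_\infty} = V_\infty$. The containment $D_{P_\infty} \subseteq V_\infty$ is immediate since $V_\infty$ dominates $D_{P_\infty}$. For the reverse, given $\alpha \in V_\infty$, Lemma \ref{height-one} furnishes an irreducible $f \in F[X]$ with $(V_\infty)_P = F[X]_{(f)}$, so one can write $\alpha = g/h$ in lowest terms in $F[X]$ with $h$ coprime to $f$; this forces $v_\infty(h)$ to lie in the finite convex subgroup $\Gamma_1$ of the value group of $v_\infty$. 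If one can produce $c \in F[X]$ with $v_\infty(c) = -v_\infty(h)$, then $b := hc$ satisfies $v_\infty(b) = 0$ (so $b \in D \setminus P_\infty$) and $\alpha b = gc \in F[X]$ with $v_\infty(\alpha b) \geq 0$, placing $\alpha b \in D$ and hence $\alpha = (\alpha b)/b \in D_{P_\infty}$.

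The main obstacle is producing the polynomial $c$. I would handle this by exploiting the MacLane inductive data: in the infinite-limit case, the degrees of the key polynomials stabilize (as in the proof of Proposition \ref{finitelimit is DVR}), so $\Gamma_1$ coincides with the value group $\Gamma_t$ of the stabilized commensurable stage, which is cyclic. Positive elements of $\Gamma_1$ are realized by products of key polynomials, while negative values in $\Gamma_1$ are realized by dividing such products by sufficiently high powers of $\pi$ (whose $v_\infty$-value is $1 \in \Gamma_1$). Confirming that every element of $\Gamma_1$ is attained as $v_\infty(c)$ for some $c \in F[X]$ then amounts to careful arithmetic in the cyclic group $\Gamma_t$ using the explicit generators $1,\mu_1,\ldots,\mu_{t-1}$ provided by MacLane's construction.
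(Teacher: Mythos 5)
Your overall skeleton matches the paper's: split the primes of $D$ according to whether they contain $\pi$, handle the finite-limit and infinite-limit cases separately, and in the finite-limit case cite Proposition~\ref{finitelimit is DVR} together with Lemma~\ref{radical}. The finite-limit half is essentially verbatim. Where you diverge genuinely is the infinite-limit case, and that is also where you leave a real gap.

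The paper does \emph{not} try to exhibit $D_{P_\infty}=V_\infty$ by producing, for a given $h\in F[X]$, a polynomial $c\in F[X]$ with $v_\infty(c)=-v_\infty(h)$ exactly. Instead it takes $f,g\in D$, reduces to the case $v_\infty(f)=\infty$, $v_\infty(g)<\infty$, and then raises to a power: choose $n,r$ with $n\,v_\infty(g)=r\in\SZ_{>0}$, note $g^n/\pi^r\in D\setminus P_\infty$ and $f^n/\pi^r\in D$, conclude $(f/g)^n\in D_{P_\infty}$, and finish because $D_{P_\infty}$ is integrally closed. This neatly sidesteps the question of which elements of the finite convex subgroup $\Gamma_1$ are realized by polynomial $v_\infty$-values, which is precisely the point you defer to ``careful arithmetic in the cyclic group $\Gamma_t$.'' As written, that plan is not secure: the free generators of $\Gamma_t$ are $1,\mu_1,\dots,\mu_t$ (your list stops at $\mu_{t-1}$), and more importantly it is not automatic that $v_\infty(\phi_t)=\mu_t$, since $\deg\phi_j=\deg\phi_t$ for all $j\geq t$ and MacLane's stabilization result only guarantees $v_j(p)$ stabilizes for $\deg p < \deg\phi_t$. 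So you would need a further argument (and it would be more delicate than you indicate) before the claimed realization goes through.

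The good news is that your framework \emph{can} be closed without that realization lemma by importing the paper's trick: take $e$ with $e\,v_\infty(h)=r\in\SZ_{\geq 0}$, set $b:=h^e/\pi^r\in F[X]$ with $v_\infty(b)=0$, so $b\in D\setminus P_\infty$; then $\alpha^e b = g^e/\pi^r\in D$, hence $\alpha^e\in D_{P_\infty}$, and integral closure of $D_{P_\infty}$ gives $\alpha\in D_{P_\infty}$. This replaces ``produce $c$ with $v_\infty(c)=-v_\infty(h)$'' by ``$ev_\infty(h)\in\SZ$,'' which is immediate from $\Gamma_1$ being cyclic.

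Two smaller remarks. First, your dimension count to show $P_\infty$ is maximal in the infinite-limit case is both unnecessary and slightly circular (it assumes the bound on $\dim D$ that maximality of $P_\infty$ would give); Lemma~\ref{height-one} already states $P_\infty$ is the \emph{only} prime of $D$ containing $\pi$, which gives maximality directly since any prime properly containing $P_\infty$ would contain $\pi$. Second, in the finite-limit case Lemma~\ref{radical} should give $D_{P_\infty}=V_\infty$, not $D_{P_\infty}=V$; this looks like a transcription slip, but worth flagging since the statement of Lemma~\ref{radical} in the paper has the same typo.
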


\begin{proof}
Suppose first that $V_\infty$ is a finite limit valuation domain.
If $P \in \Spec(D)$ is such that $P \cap V = (0)$, then $D_P =
F[X]_{D \backslash P}$ which is a DVR.
Otherwise, if $P \cap V = M = (\pi)$, then $P = P_\infty$, which by Proposition~\ref{max finite_val_prime} is the only
prime ideal of $D$ containing $\pi$, and $D_{P_\infty} = V_\infty$,
by Lemma \ref{radical}.

Next, suppose that $V_\infty$ is an infinite limit valuation domain.
If $P \in \Spec(D)$ is such that $P \cap V = (0)$, then as in the
finite limit case, $D_P$ is a DVR. So suppose that  $P \cap V = M =
(\pi)$, so that by Lemma~\ref{height-one}, $P = P_\infty$. Let $f,g
\in D$. If $v_\infty(f) < \infty$ and/or $v_\infty(g) < \infty$,
following the arguments used to prove the second part of \cite[Lemma
1.3]{HO}, we have that $f/g$ or $g/f$ is in $D_{P_\infty}$.

Consider now the case in which $v_\infty(f) = v_\infty(g) = \infty$.
Then, following the same argument of \cite[Lemma 1.23]{lt}, we have
that $f$ and $g$ have a common factor. So, as the problem being to
see whether $f/g$ or $g/f$ are in $D_{P_\infty}$, we can easily
reduce, without loss of generality,  to the case in which
$v_\infty(f) = \infty$ and $v_\infty(g) < \infty$.

Since $v_\infty(g) > 0$, there exists positive integers $n,r$ such
that $v_\infty(g^n/ \pi^r) = 0$. This implies that $g^n/ \pi^r \in D
\backslash P_\infty$. Moreover, $v_\infty(f^n/ \pi^r) = \infty$, so
$f^n/ \pi^r \in D$. Thus $(f^n/ \pi^r)/(g^n/ \pi^r) \in
D_{P_\infty}$. It follows that $(f/g)^n \in D_{P_\infty}$ and, since
$D_{P_\infty}$ is integrally closed, $f/g \in D_{P_\infty}$.
Thus $D_{P_\infty}$ is a valuation domain and so $D$ is
Pr\"ufer.
\end{proof}

\begin{cor}\label{maximal}
Let $V_\infty$ be a limit valuation domain and $W$ be   a
$\pi$-unitary valuation domain  such that $V_\infty \preceq W$. Then
$V_\infty=W$.
\end{cor}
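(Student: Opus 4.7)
The plan is to realize both $V_\infty$ and $W$ as localizations of the Pr\"ufer domain $D := V_\infty \cap F[X]$ at the same maximal ideal, and then to conclude equality. I would set $D' := W \cap F[X]$, $P := M_\infty \cap F[X]$, and $Q := M_W \cap F[X]$, so that the hypothesis $V_\infty \preceq W$ translates to $D \subseteq D'$ and $P \subseteq Q$. In particular $P \subseteq Q \cap D$, while $Q \cap D$ is a proper ideal of $D$.

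The essential algebraic input is that $P$ is a maximal ideal of $D$ and $V_\infty = D_P$. In the finite limit case this is exactly Lemma~\ref{max finite_val_prime}. In the infinite limit case, Lemma~\ref{height-one} identifies $P$ as the unique prime of $D$ containing $\pi$; since $\pi \in P$, any prime of $D$ strictly above $P$ would contain $\pi$ and hence equal $P$, so $P$ is maximal. For the identification $V_\infty = D_P$ in this case, the proof of Proposition~\ref{infinite limit prufer} already shows that $D_P$ is a valuation domain. Since $D_P \subseteq V_\infty$ are comparable valuation overrings of $D$ with the same center $P$, writing $V_\infty = (D_P)_{\mathfrak{q}}$ for some prime $\mathfrak{q}$ of $D_P$ and matching centers forces $\mathfrak{q}$ to be the maximal ideal of $D_P$, yielding $V_\infty = D_P$. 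Once $P$ is known to be maximal, the chain $P \subseteq Q \cap D \subsetneq D$ forces $Q \cap D = P$.

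The final step is a direct application of the Pr\"ufer structure. By Proposition~\ref{infinite limit prufer}, $D$ is Pr\"ufer, so $D'$ is a flat Pr\"ufer overring of $D$; primes of $D'$ correspond under contraction to primes of $D$, with matching localizations. Then $W$ is a valuation overring of the Pr\"ufer domain $D'$ and hence $W = D'_Q$; moreover $D'_Q = D_{Q \cap D} = D_P = V_\infty$, so $W = V_\infty$. The only mildly nontrivial point in the argument is verifying $V_\infty = D_P$ in the infinite limit case, but this is a short consequence of the valuation-theoretic fact that a chain of valuation overrings sharing the same center must be equal.
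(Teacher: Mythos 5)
Your proof is correct and follows essentially the same route as the paper: both rely on the fact that $D := V_\infty \cap F[X]$ is Pr\"ufer (Proposition~\ref{finite limit prufer}) to identify $W$ as a localization of $D$ (or of its overring $D'$), and on the fact that the center of $V_\infty$ is the unique prime of $D$ containing $\pi$. The paper's proof is terser---it applies the Pr\"ufer property to $W$ directly as an overring of $D$ and notes that $\pi$-unitarity pins down the localizing prime---whereas you detour through $D'$, but the core ideas and the lemmas invoked are the same, and your explicit verification that $D_P = V_\infty$ in the infinite-limit case fills in a step the paper leaves implicit.
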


\begin{proof}
Since $V_\infty \preceq W$, then $V_\infty \cap F[X] \subseteq W
\cap F[X] \subseteq W$. We have shown that $V_\infty \cap F[X]$ is a
Pr\"ufer domain (Proposition \ref{finite limit prufer}). Hence,  $W$ is a localization of
$V_\infty \cap F[X]$. But, since $W$ is $\pi$-unitary it can only be
equal to $V_\infty$.
\end{proof}

%
%
%Now, if this works we would have exactly the same description of
%$\pi$-unitary valuation overrings of $V[X]$ as it holds for
%$\SZ_p[X]_{(p,X)}$.

  \begin{lem}\label{monotonicity} Let $W$ be  a $\pi$-unitary valuation overring of
  $V[X]$, and $\{V_1, \ldots, V_h\}$ be an inductive sequence.
   Suppose that $v_i(\phi_i) \leq w(\phi_i)$, for each $i=1, \ldots,h$,
   where $\phi_i$ is the key polynomial used to construct $V_i$ as an augmented value of $V_{i-1}$.
   Then $V_h \preceq W $.

  \end{lem}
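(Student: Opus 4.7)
The plan is to prove the stronger pointwise inequality
\[
w(f) \;\geq\; v_h(f) \quad \text{for every nonzero } f \in F[X],
\]
from which $V_h \preceq W$ follows immediately: any $f \in F[X]$ with $v_h(f) \geq 0$ (resp.\ $v_h(f) > 0$) will then satisfy $w(f) \geq 0$ (resp.\ $w(f) > 0$), giving both containments $V_h \cap F[X] \subseteq W \cap F[X]$ and $M_h \cap F[X] \subseteq M_w \cap F[X]$ in Definition~\ref{order}. Before beginning I would normalize $w$ so that $w(\pi) = v(\pi) = 1$, exactly as in the proof of Proposition~\ref{charMaclanetype}, so that $w|_F = v$; this is legitimate since $W$ is $\pi$-unitary and the hypothesis $v_i(\phi_i) \leq w(\phi_i)$ is a numerical comparison that only makes sense relative to a choice of normalization.

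The proof proceeds by induction on $h$. For the base case $h=1$, recall that $V_1$ is a first-stage extension and (by Definition~\ref{first stage}) we may take $\phi_1 = X$, so the hypothesis reads $\mu_1 = v_1(X) \leq w(X)$. Writing $f = \sum_{i=0}^n a_i X^i$ with $a_i \in F$ and applying the usual valuation inequality together with the normalization $w|_F = v$,
\[
w(f) \;\geq\; \min_i \bigl\{ w(a_i) + i\, w(X) \bigr\} \;\geq\; \min_i \bigl\{ v(a_i) + i\mu_1 \bigr\} \;=\; v_1(f),
\]
which settles the base case.

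For the inductive step, assume $w(g) \geq v_{h-1}(g)$ for every nonzero $g \in F[X]$, and let $f \in F[X]$ be nonzero. Use the $\phi_h$-expansion (I),
\[
f \;=\; a_n \phi_h^n + a_{n-1}\phi_h^{n-1} + \cdots + a_1 \phi_h + a_0,
\]
with each $a_i \in F[X]$ of degree strictly less than $\deg(\phi_h)$. By MacLane's definition of augmented value, $v_h(f) = \min_i\{ v_{h-1}(a_i) + i\mu_h\}$ where $\mu_h = v_h(\phi_h)$. Applying the induction hypothesis term by term gives $w(a_i) \geq v_{h-1}(a_i)$, and the current hypothesis gives $w(\phi_h) \geq v_h(\phi_h) = \mu_h$, so
\[
w(f) \;\geq\; \min_i \bigl\{ w(a_i) + i\, w(\phi_h) \bigr\} \;\geq\; \min_i \bigl\{ v_{h-1}(a_i) + i\mu_h \bigr\} \;=\; v_h(f),
\]
completing the induction.

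The one subtlety worth flagging — and the only real obstacle — is making sure the $\phi_h$-expansion of $f$ is the correct tool at each inductive step, and that the two standard valuation facts used above (the triangle inequality $w(\sum g_i) \geq \min_i w(g_i)$ and additivity $w(ab) = w(a)+w(b)$) interact correctly with the augmented-value formula $v_h(f) = \min_i\{v_{h-1}(a_i) + i\mu_h\}$. This last equality is the defining property of the augmented value (cf.\ the construction following Definition~\ref{first stage}), so nothing more is needed beyond invoking it. Everything else is routine once the induction is set up on the stronger pointwise inequality rather than on $\preceq$ directly.
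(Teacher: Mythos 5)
Your proposal is correct and follows essentially the same argument as the paper: prove the pointwise inequality $w(f) \geq v_h(f)$ for all nonzero $f \in F[X]$ by induction on $h$, using the triangle inequality on the $\phi_h$-expansion and the augmented-value formula, after normalizing so that $w|_F = v$. The only cosmetic difference is that the paper justifies $w|_F = v$ by appealing to Proposition~\ref{charMaclanetype}, whereas you state the normalization directly (which is arguably cleaner, since $\pi$-unitarity forces $W \cap F = V$ and hence $w|_F$ is equivalent to $v$).
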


  \begin{proof}
  We prove the statement by induction over $i \geq 1$.

  Suppose that $v_1(X) = \mu_1 \leq w(X)$. By Proposition \ref{charMaclanetype}, $W$
  is a $k^{th}$-stage inductive valuation domain,
for some $k < \infty$, or it is a limit valuation. Thus, $w(a) =
v_1(a) =v(a)$, for each $a \in F$.

  Then, for each $f(X)=\sum_{j=0}^s a_jX^j
  \in F[X]$, we have that:

   $$v_1(f(X))= \min_{j = 1, \ldots, s}\{v(a_j) + j\mu_1 \} \leq
     \min_{j = 1, \ldots, s}\{w(a_j) + w(X^j)\}  \leq  w(f(X)).$$

Suppose by induction that $v_i(g(X)) \leq w(g(X))$, for each $g(X)
\in F[X]$ and $i < h$. Then, given $f(X)=\sum_{j=0}^r
a_j(X)\phi_{h}^j
  \in F[X]$, we have that:

$$v_{h}(f(X))= \min_{j = 1, \ldots, r}\{v_{h-1}(a_j(X)) + j\mu_{h} \} \leq
     \min_{j = 1, \ldots, r}\{w(a_j(X)) + w(\phi_{h}^j)\}  \leq  w(f(X)).$$
\end{proof}

\begin{rem}
{\em We observe that the previous result may not hold if $W$ is not
$\pi$-unitary. In fact, in this case,  $w(a) = 0$, for each $a \in
F$, and so the basis of the inductive process ($v_1(f) \leq w(f)$,
for all $f \in F[X]$) cannot be proven.}
\end{rem}

\begin{thm}\label{localization}  Every ring between $V[X]$ and $F[X]$ is an essentially one-valuated subring of $F[X]$.
\end{thm}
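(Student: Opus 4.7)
The plan is to fix an integrally closed ring $H$ with $V[X] \subseteq H \subseteq F[X]$ and a prime ideal $P$ of $H$, and to exhibit a single valuation overring $W$ of $H$ such that $H_P = W \cap F[X]_P$. First I would dispose of the case $\pi \notin P$: here $\pi$ becomes a unit in $H_P$, so $F \subseteq H_P$, which combined with $H \subseteq F[X]$ forces $H_P = F[X]_P$, and the trivial valuation ring $W = F(X)$ then gives $W \cap F[X]_P = F[X]_P = H_P$.

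Assume henceforth $\pi \in P$. Any valuation overring of $H$ that dominates $H_P$ has $\pi$ in its maximal ideal and is thus $\pi$-unitary, and by Proposition~\ref{charMaclanetype} is either inductive or a limit valuation. Writing $H_P$ as the intersection of all its valuation overrings in $F(X)$ and separating those in which $\pi$ is a unit from the $\pi$-unitary ones, I would obtain
$$H_P \;=\; F[X]_P \,\cap\, \bigcap_{W \in \mathcal{W}_P} W,$$
where $\mathcal{W}_P$ denotes the set of $\pi$-unitary valuation overrings of $H$ dominating $H_P$. The first factor appears because any valuation overring of $H_P$ in which $\pi$ is a unit contains $F[X]$, and since every $s \in H \setminus P$ is a unit there it also contains $F[X]_P$; as $F[X]_P$ is a localization of a PID, it equals the intersection of its DVR overrings. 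It thus suffices to produce a single $W_0 \in \mathcal{W}_P$ such that $W_0 \cap F[X]_P = \bigcap_{W \in \mathcal{W}_P}(W \cap F[X]_P)$.

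The construction of $W_0$ follows MacLane's augmentation procedure, tailored to the prime $P$ along the lines of \cite{lt}. I would build inductively a sequence of inductive valuations $V_1 \preceq V_2 \preceq \cdots$ by choosing at each stage $i$ a key polynomial $\phi_i$ over $V_{i-1}$ detected by $P$ (using the recipe following Lemma~\ref{radical}: the image of $P$ in $V_{i-1} \cap F[X]/M_{V_{i-1}} \cap F[X] \cong E_{i-1}[Y]$ is a prime, which corresponds to an irreducible $\psi_i \in E_{i-1}[Y]$ lifting to $\phi_i$), together with a value $\mu_i$ equal to the infimum of $w(\phi_i)$ over $w \in \mathcal{W}_P$. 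By Lemma~\ref{monotonicity} each $V_i$ then satisfies $V_i \preceq W$ for every $W \in \mathcal{W}_P$. Either the process terminates at some finite stage $k$, in which case I take $W_0 := V_k$, or it produces an infinite inductive sequence with limit valuation $W_0 := V_\infty$; in the latter case Corollary~\ref{maximal} guarantees $W_0 \in \mathcal{W}_P$, since a limit valuation dominated in the $\preceq$ sense by any $\pi$-unitary $W$ must equal $W$.

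The main obstacle is twofold. First, one must verify that at each stage the key polynomial $\phi_i$ exists with the required properties, that the infimum defining $\mu_i$ is finite, and that if $\mu_i$ turns out irrational the construction can still terminate by making $V_i$ the final incommensurable stage. Second, and more delicately, Lemma~\ref{monotonicity} yields only an inclusion of $F[X]$-traces rather than of the valuation rings themselves; to upgrade $V_i \preceq W$ into the desired identity $W_0 \cap F[X]_P = \bigcap_{W \in \mathcal{W}_P}(W \cap F[X]_P)$, one exploits that every $s \in H \setminus P$ is a unit in each $W \in \mathcal{W}_P$ (as $P \subseteq M_W \cap H$), so that an element $h = a/s \in W_0 \cap F[X]_P$ reduces to showing $a \in W \cap F[X]$; this follows from $V_k \cap F[X] \subseteq W \cap F[X]$ in the inductive case, and by passing to the limit of $v_i(a) \leq w(a)$ to obtain $v_\infty(a) \leq w(a)$ in the limit case.
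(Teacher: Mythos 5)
Your plan diverges substantially from the paper's, and in my view it has a real gap. The paper first invokes Proposition~\ref{n case} to reduce to the case where $H$ is the integral closure $R$ of a \emph{finitely generated} $V[X]$-subalgebra of $F[X]$. This reduction is not cosmetic: it guarantees that $R$ is a Krull (Noetherian) domain, hence $R = W_1 \cap \cdots \cap W_n \cap F[X]$ with \emph{finitely many} $\pi$-unitary DVRs $W_i$, and the quantities one minimizes over in the MacLane construction are genuine minima attained among finitely many values. You skip this reduction and work directly with the set $\mathcal{W}_P$ of all $\pi$-unitary valuation overrings of $H$ dominating $H_P$, which can be infinite. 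Then $\mu_i := \inf_{w\in\mathcal W_P} w(\phi_i)$ need not be attained, need not be strictly larger than $v_{i-1}(\phi_i)$ (a requirement for MacLane's augmentation to produce a new valuation), and there is no clear criterion for when the process ``terminates,'' nor an argument that the resulting $W_0$ satisfies $W_0 \cap F[X]_P = H_P$. Your ``obstacle'' paragraph flags exactly these issues without resolving them; they are precisely what the reduction to the Noetherian case is there to remove.

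The second, more structural difference: even in the finite case, you attempt a \emph{direct construction} of the single valuation $W_0$ cutting out $H_P$, whereas the paper proves a \emph{comaximality} statement by contradiction. Namely, assuming the center $P$ contains the centers $P_1,\ldots,P_k$ of two or more of the $W_i$'s, the paper builds an inductive commensurable domain $\overline W$ with $\overline W \preceq W_i$ for all $i$ (using $\mu_j = \min_i w_i(\phi_j)$, a genuine minimum) and whose purpose is not to be the answer $W_0$ but to \emph{separate} at least two of the $W_i$'s --- i.e., to exhibit two of their centers in $\overline W \cap F[X]$ as distinct, hence comaximal, height-two primes, forcing $P R_P = R_P$, absurd. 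Termination of this separating process is secured by Corollary~\ref{maximal}: an infinite inductive sequence with $B_j \preceq W_i$ for all $i$ would force all the $W_i$ to coincide with the limit valuation, contradicting the irredundancy of the Krull decomposition. Once comaximality is known, the conclusion $R_P = W_i \cap F[X]_P$ for the unique $W_i$ with center in $P$ is immediate; nothing needs to be built from scratch. Your plan, by contrast, must prove an equality of intersections $W_0 \cap F[X]_P = \bigcap_W(W\cap F[X]_P)$, which is strictly harder than showing two centers separate, and which Lemma~\ref{monotonicity} alone (giving only $V_i \preceq W$) does not deliver. If you first apply Proposition~\ref{n case} to land in the Noetherian case, you will likely find that the cleanest way to finish is the paper's contradiction route rather than a direct construction.
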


\begin{proof} By Proposition~\ref{n case}, it is enough to show that every finitely generated $V[X]$-subalgebra
$H$ of $F[X]$ is essentially one-valuated.  Let $R$ be the integral
closure of such a subalgebra.  Then $R  = W_1 \cap \cdots \cap W_n
\cap F[X]$, with each $W_i$ a $\pi$-unitary DVR overring of $V[X]$.
In fact, the $W_i$'s can be chosen to be the localizations of $R$ at the height one
prime ideals of $R$ lying over maximal ideals of $V[X]$, and   we may assume  that   $W_i \cap F[X] \nsubseteq W_j
\cap F[X]$, for  $i \neq j$.

Let $P \in \spec(R)$. If $P \cap V = (0)$, then $R_P$ is a
localization of $F[X]$, and hence $R_P$ trivially is an intersection
of a valuation overring and a localization of $F[X]$.  So suppose
that $P \cap V = M = \pi V$.  Then we argue as follows. Let $P_i$ be
the center of $W_i$ in $R$. Then $P$ contains at least one of the
centers $P_i$, for some $i=1,\ldots,n$. We observe that $P_i$ is the
contraction in $R$ of the valuation prime of $W_i \cap F[X]$ and,
  by Lemma~\ref{radical}, this is the radical of $(\pi)$ in $W_i \cap F[X]$. Assume that $P$
does not contain any $P_i$, for  $i=1,\ldots,n$. Thus, setting $S :=
R \backslash P $, we have that $F[X] \subseteq S^{-1}(W_i \cap
F[X])$, for each $i=1,\ldots,n$. Thus $F[X] \subseteq R_P$, against
the assumption that $P$ contains $\pi$. So $P$ contains at least one
$P_i$, for some $i=1,\ldots,n$.

Now, we claim that $P$ contains exactly one $P_i$.
Suppose by contradiction that $P$ contains  exactly $P_1,P_2,
\ldots, P_k$, for some $2 \leq k \leq n$. Then $R_{P} = (W_1 \cap
\cdots \cap W_k \cap F[X])_{R \backslash P}$ and $P_1, \ldots, P_k
\subseteq P$.
 We will show that there
exists an inductive commensurable domain   $\overline{W}$ such that
$\overline{W} \cap F[X] \subseteq R_{P}$ and at least two
different $W_i$ and $W_j$, with $i,j=1, \ldots,k$ have different
centers in $\overline{W} \cap F[X]$.

The centers $\m_1, \ldots, \m_k$ of $W_1, \ldots, W_k$ in $V[X]$ are
height-two. In fact, the only height-one prime of $V[X]$ containing
$\pi$ is $M[X]$, and if $\m_i = M[X]$ then $W_i = V[X]_{M[X]}=V(X)$.
This is not possible since we are assuming that $W_i \cap F[X]
\nsubseteq W_j \cap F[X]$, for $i \neq j$ (observe that $V(X) \cap
F[X] = V[X]$ and $V[X] \subset W_i, W_j$).

If, at least, two of the $\m_i$'s   are distinct, then they are
comaximal in $V[X]$ and so $P = R$ against the assumption.

If $\m_1 = \cdots = \m_k = \m$,  we extend $V(X)$ using a key
polynomial $\phi_1$ associated to $\m$ (in fact, for each nonzero
prime ideal of $V[X]/M[X]$ it is possible to define a key
polynomial, \cite[\S 9]{lane}) and with assigned value $\mu_1 :=
\min_{i=1,\ldots,k}\{w_i(\phi_1)\}$. We get an inductive
commensurable domain $B_1$ such that $B_1 \preceq W_i$, for
$i=1,\ldots,k$ (Lemma \ref{monotonicity}). Thus, $B_1 \cap F[X]
\subseteq W_1 \cap \cdots \cap W_k \cap F[X] \subseteq R_P$. Now, we
consider the centers of $W_1, \ldots, W_k$ in $B_1 \cap F[X]$. If,
at least, two of them are distinct, then we set $\overline{W} = B_1$
and we are done. If not, following the same procedure used to
construct $B_1$, we extend $B_1$ to an inductive commensurable
domain $B_2$ such that $B_2 \preceq W_i$, for $i=1,\ldots,k$ (also
in this case, the (unique) center of the $W_i$'s in $B_1 \cap F[X]$
is a height-two prime containing $\pi$ and we can associate to it a
key polynomial). Then, we apply to $B_2$ the same argument used for
$B_1$ and either put $\overline{W} = B_2$ or find an inductive
commensurable domain $B_3$ which is a MacLane extension of $B_2$ and
such that $B_3 \preceq W_i$, for $i=1,\ldots,k$. After finitely many
steps we will find the desired inductive commensurable domain
$\overline{W}$. For otherwise, we would have an infinite inductive
sequence $\{B_j\}_{j \geq 0}$, such that $B_j \preceq W_i$, for each
$j \geq 0$ and $i = 1,\ldots,k$. Thus $W_1 = \cdots = W_k = \lim_{j
\rightarrow \infty}B_j$, since the limit valuations are maximal
points with respect to $\preceq$ (Corollary \ref{maximal}).

Now, consider the valuation domain $\overline{W}$. By construction
$\overline{W} \preceq W_i$, for $i=1,\ldots,k$ and thus
$\overline{W} \cap F[X] \subseteq R_{P}$. Without loss of
generality, suppose that $W_1$ and $W_2$ have different centers
$\m_1$ and $\m_2$ in $\overline{W} \cap F[X]$. Then both $\m_1$ and
$\m_2$ are contained in $PR_P$. But, by construction, $\m_1$ and
$\m_2$ are distinct height-two primes in
 $\overline{W} \cap F[X]$, whence they are comaximal. So $PR_P = R_{P}$
 and this is a contradiction.

 The thesis follows.
\end{proof}

From the theorem we deduce a generalization of \cite[Corollary 3.3]{lt}, in which the case $D = {\mathbb{Z}}$ was proved.

\begin{cor}
If $A$ is a Dedekind domain with quotient field $F$, then every ring between $A[X]$ and $F[X]$ is essentially one-valuated.
\end{cor}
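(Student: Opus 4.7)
The plan is to reduce the Dedekind case to the DVR case already settled by Theorem~\ref{localization}, exploiting the fact that a Dedekind domain becomes a DVR after localizing at any maximal ideal.

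Let $H$ be a ring with $A[X] \subseteq H \subseteq F[X]$, and let $\overline{H}$ denote its integral closure in the quotient field $F(X)$. Since $F[X]$ is integrally closed in $F(X)$ and $A[X]$ is already integrally closed, $A[X] \subseteq \overline{H} \subseteq F[X]$. Fix a prime $P$ of $\overline{H}$ and set $\mathfrak{m} := P \cap A$, which is either $(0)$ or a maximal ideal of $A$; write $F[X]_P$ for the localization of $F[X]$ at $\overline{H}\setminus P$. The goal is to produce a valuation overring $V$ of $H$ with $\overline{H}_P = V \cap F[X]_P$.

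If $\mathfrak{m} = (0)$, then every nonzero element of $A$ is a unit in $\overline{H}_P$, so $F[X] \subseteq \overline{H}_P$; a routine clearing of denominators then forces $\overline{H}_P = F[X]_P$, and any valuation overring of this localization of the PID $F[X]$ (for instance, a further localization at one of its maximal ideals) supplies the required $V$. In the more substantive case where $\mathfrak{m}$ is maximal, $A_\mathfrak{m}$ is a DVR with quotient field $F$. Set $\overline{H}' := \overline{H}[(A\setminus\mathfrak{m})^{-1}]$, which is integrally closed and satisfies $A_\mathfrak{m}[X] \subseteq \overline{H}' \subseteq F[X]$. Theorem~\ref{localization}, applied with the DVR $A_\mathfrak{m}$, says that $\overline{H}'$ is essentially one-valuated in $F[X]$; applying this condition at the prime $P' := P\overline{H}'$ produces a valuation overring $V$ of $\overline{H}'$, hence of $H$, with $(\overline{H}')_{P'} = V \cap F[X]_{P'}$.

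What remains is the identification $(\overline{H}')_{P'} = \overline{H}_P$ and $F[X]_{P'} = F[X]_P$; both hold because $A \setminus \mathfrak{m} \subseteq \overline{H} \setminus P$ and the elements of $A \setminus \mathfrak{m}$ are already units in $F[X]$, so the additional inversions built into $\overline{H}'$ change neither localization. Since Theorem~\ref{localization} encapsulates the entire key-polynomial and hidden-prime-divisor machinery, the only genuine obstacle here is the careful bookkeeping among the several multiplicative sets at play, directly parallel to the analogous reduction in the proof of Lemma~\ref{MacLane}.
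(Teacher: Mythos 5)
Your proposal is correct, and it takes essentially the same route as the paper's own proof: fix a prime of the normalization, split into the cases where the contraction to $A$ is zero or a maximal ideal $\mathfrak{m}$, localize so that the base becomes the DVR $A_\mathfrak{m}$, invoke Theorem~\ref{localization}, and then check that the relevant localizations of $F[X]$ (and of the ring itself) agree. The only cosmetic difference is the intermediate ring used to land in the hypotheses of Theorem~\ref{localization} — you invert $A\setminus\mathfrak{m}$ in $\overline{H}$ to form $\overline{H}'=\overline{H}[(A\setminus\mathfrak{m})^{-1}]$, whereas the paper takes $H_M\cap F[X]$ directly — but both rings sit between $A_\mathfrak{m}[X]$ and $F[X]$, and the subsequent bookkeeping with multiplicative sets is the same saturation argument.
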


\begin{proof}
Let $H$ be an integrally closed ring between $A[X]$ and $F[X]$, and
let $M$ be a maximal ideal of $H$. If $M \cap A = (0)$, then $F[X]_M
= H_M$ and this is a DVR. If $M \cap A = m \neq (0)$, let $V :=
A_{(A \backslash m)}$. Then $V[X] \subseteq H_M \subseteq F_{(H
\backslash M)}[X]$ and $V[X] \subseteq H_M \cap F[X] \subseteq F[X]$. By
Theorem \ref{localization}, $H_M \cap F[X]$ is an essentially
one-valuated subring of $F[X]$. Now, $(H_M \cap F[X])_{MH_M \cap F[X]} = H_M$, and
this proves that $H$ is an essentially one-valuated subring of $F[X]$.
\end{proof}

{\it Acknowledgment}. We thank the referee for a careful reading and comments that improved the presentation of the paper.

%\begin{thebibliography}{99}

%\end{thebibliography}
\end{document}